\newtheorem{intthm}{Theorem}[]
\newtheorem*{intque*}{Question}
\newtheorem*{intexa*}{Example}
\newcommand{\numberseries}{\bfseries}   
\newlength{\thmtopspace}                
\newlength{\thmbotspace}                
\newlength{\thmheadspace}               
\newlength{\thmindent}                  
\newtheoremstyle{bfupright head,slanted body}
{\thmtopspace}{\thmbotspace}
{\slshape}{\thmindent}{\bfseries}{.}{\thmheadspace}
{{\numberseries \thmnumber{#2\;}}\thmnote{#3}}
\newtheoremstyle{bfupright head,upright body}
{\thmtopspace}{\thmbotspace}
{\upshape}{\thmindent}{\bfseries}{.}{\thmheadspace}
{{\numberseries \thmnumber{#2\;}}\thmnote{#3}}
\newtheoremstyle{fixed bf head,slanted body}
{\thmtopspace}{\thmbotspace}{\slshape}
{\thmindent}{\bfseries}{.}{\thmheadspace}
{{\numberseries \thmnumber{#2\;}}\thmname{#1}\thmnote{ (#3)}}
\newtheoremstyle{fixed bf head,upright body}
{\thmtopspace}{\thmbotspace}{\upshape}
{\thmindent}{\bfseries}{.}{\thmheadspace}
{{\numberseries \thmnumber{#2\;}}\thmname{#1}\thmnote{ (#3)}}
\newtheoremstyle{numbered paragraph}
{\thmtopspace}{\thmbotspace}{\upshape}
{\thmindent}{\upshape}{}{\thmheadspace}
{{\numberseries \thmnumber{#2.}}}
\theoremstyle{bfupright head,slanted body}
\newtheorem{res}{}[section]             \newtheorem*{res*}{}
\theoremstyle{bfupright head,upright body}
               \newtheorem*{bfhpg*}{}
\theoremstyle{fixed bf head,slanted body}
\newtheorem{thm}[res]{Theorem}         \newtheorem*{thm*}{Theorem}
\newtheorem{prp}[res]{Proposition}      \newtheorem*{prp*}{Proposition}
\newtheorem{cor}[res]{Corollary}        \newtheorem*{cor*}{Corollary}
\newtheorem{lem}[res]{Lemma}            \newtheorem*{lem*}{Lemma}
         \newtheorem*{que*}{Question}
\theoremstyle{fixed bf head,upright body}
\newtheorem{dfn}[res]{Definition}       \newtheorem*{dfn*}{Definition}
\newtheorem{rmk}[res]{Remark}           \newtheorem*{rmk*}{Remark}
\newtheorem{exa}[res]{Example}           \newtheorem*{exa*}{Example}
           \newtheorem*{nota*}{Notation}
           \newtheorem*{setup*}{Setup}
\newtheorem{setup and notation}[res]{Setup and notation}  \newtheorem*{setup and notation*}{Setup and notation}
\theoremstyle{numbered paragraph}
\newtheorem{ipg}[res]{}
\newlength{\thmlistleft}        
\newlength{\thmlistright}       
\newlength{\thmlistpartopsep}   
\newlength{\thmlisttopsep}      
\newlength{\thmlistparsep}      
\newlength{\thmlistitemsep}     
\newcounter{eqc}
	{\end{list}}%
\newcounter{prt}
\newenvironment{prt}{\begin{list}{\upshape (\alph{prt})}%
		{\usecounter{prt}%
			\setlength{\leftmargin}{\thmlistleft}%
			\setlength{\labelwidth}{\thmlistleft}%
			\setlength{\rightmargin}{\thmlistright}%
			\setlength{\partopsep}{\thmlistpartopsep}%
			\setlength{\topsep}{\thmlisttopsep}%
			\setlength{\parsep}{\thmlistparsep}%
			\setlength{\itemsep}{\thmlistitemsep}}}%
	{\end{list}}%
\newcounter{rqm}
\newenvironment{rqm}{\begin{list}{\upshape (\arabic{rqm})}%
		{\usecounter{rqm}%
			\setlength{\leftmargin}{\thmlistleft}%
			\setlength{\labelwidth}{\thmlistleft}%
			\setlength{\rightmargin}{\thmlistright}%
			\setlength{\partopsep}{\thmlistpartopsep}%
			\setlength{\topsep}{\thmlisttopsep}%
			\setlength{\parsep}{\thmlistparsep}%
			\setlength{\itemsep}{\thmlistitemsep}}}%
	{\end{list}}%
\newenvironment{prf*}[1][Proof]{%
	\begin{proof}[\bf #1]
		\setcounter{equation}{0}
		}
	{\end{proof}
}
\newcommand{\pgref}[1]{\ref{#1}}
\renewcommand{\eqref}[1]{(\pgref{eq:#1})}
\newcommand{\thmcite}[2][?]{\cite[Theorem~#1]{#2}}
\newcommand{\dfncite}[2][?]{\cite[Definition~#1]{#2}}
\numberwithin{equation}{res}
\def\urltilda{\kern -.15em\lower .7ex\hbox{\~{}}\kern .04em}
\DeclareMathOperator*{\colim}{colim}
\newcommand{\AU}[1]{\mathsf{End}_{\calI}(#1)}
\newcommand{\xra}[2][]{\xrightarrow[#1]{\:#2\:}}
\newcommand{\Prj}[1]{\mathsf{Proj}(#1)}
\newcommand{\Inj}[1]{\mathsf{Inj}(#1)}
\newcommand{\Hom}[3][]{\operatorname{Hom}_{#1}(#2,#3)}
\newcommand{\sta}{\mathsf{sta}}
\newcommand{\lif}{\mathsf{lif}}
\newcommand{\eva}{\mathsf{eva}}
\newcommand{\fre}{\mathsf{fre}}
\newcommand{\ran}{\mathsf{ran}}
\newcommand{\FPP}{\mathsf{Fp}}
\newcommand{\CAT}{\mathsf{CAT}}
\newcommand{\Cat}{\mathsf{Cat}}
\newcommand{\Ab}{\mathsf{Ab}}
\newcommand{\calA}{\mathcal{A}}
\newcommand{\calC}{\mathcal{C}}
\newcommand{\calD}{\mathcal{D}}
\newcommand{\calI}{\mathcal{I}}
\newcommand{\calJ}{\mathcal{J}}
\newcommand{\calP}{\mathcal{P}}
\newcommand{\calX}{\mathcal{X}}
\newcommand{\calY}{\mathcal{Y}}
\newcommand{\scrD}{\mathscr{D}}
\newcommand{\scrR}{\mathscr{R}}
\newcommand{\Mor}{\mathsf{Mor}}
\newcommand{\Ob}{\mathsf{Ob}}
\newcommand{\lMod}{\textrm{-} \mathsf{Mod}}
\newcommand{\rMod}{\mathsf{Mod} \textrm{-}}
\newcommand{\lRep}{\textrm{-} \mathsf{Rep}}
\newcommand{\K}{\mbox{\sf ker}}
\newcommand{\coker}{\mbox{\rm coker}}
\newcommand{\op}{^{\sf op}}
\newcommand{\qis}{\simeq}
\newcommand{\C}{\mbox{\sf cok}}
\newcommand{\id}{\mathrm{id}}
\newcommand{\qra}{\xra{\qis}}
\newcommand{\cra}{\xra{\cong}}
\def\soft#1{\leavevmode\setbox0=\hbox{h}\dimen7=\ht0\advance
	\dimen7 by-1ex\relax\if t#1\relax\rlap{\raise.6\dimen7
		\hbox{\kern.3ex\char'47}}#1\relax\else\if T#1\relax
	\rlap{\raise.5\dimen7\hbox{\kern1.3ex\char'47}}#1\relax
	\else\if d#1\relax\rlap{\raise.5\dimen7\hbox{\kern.9ex
			\char'47}}#1\relax\else\if D#1\relax\rlap{\raise.5\dimen7
		\hbox{\kern1.4ex\char'47}}#1\relax\else\if l#1\relax
	\rlap{\raise.5\dimen7\hbox{\kern.4ex\char'47}}#1\relax
	\else\if L#1\relax\rlap{\raise.5\dimen7\hbox{\kern.7ex
			\char'47}}#1\relax\else\message{accent \string\soft
		\space #1 not defined!}#1\relax\fi\fi\fi\fi\fi\fi}
\def\part{\@startsection{part}{1}%
\z@{.7\linespacing\@plus\linespacing}{.8\linespacing}%
{\LARGE\sffamily\centering}}
\def\l@section{\@tocline{1}{2pt}{0pc}{}{}}
\let\oldtocpart=\tocpart
\renewcommand{\tocpart}[2]{\bf\large\oldtocpart{#1}{#2}}
\let\oldtocsection=\tocsection
\renewcommand{\tocsection}[2]{\bf\oldtocsection{#1}{#2}}
\title[Representations over diagrams of abelian categories]{Representations over diagrams of abelian categories I: Global structure and homological objects}
\date{ \today}
\keywords{Diagram of categories; representation over diagram; rooted category.}
\subjclass[2010]{18G25; 18A25; 18A40}
\thanks{Z.X. Di was partly supported by NSF of China (Grant No. 11971388) and the Scientific Research Funds of Huaqiao University; L.P. Li was partly supported by NSF of China (Grant No. 12171146); L. Liang was partly supported by NSF of China (Grant No. 12271230); N.N. Yu was partly supported by NSF of China (Grant No. 11971396).}
\author[Z.X. Di]{Zhenxing Di}
\address{Z.X. Di \ School of Mathematical Sciences, Huaqiao University, Quanzhou 362021, China}
\email{dizhenxing@163.com}
\author[L.P. Li]{Liping Li}
\address{L.P. Li \ LCSM (Ministry of Education), Department of Mathematics, Hunan Normal University, Changsha 410081, China.}
\email{lipingli@hunnu.edu.cn}
\author[L. Liang]{Li Liang}
\address{L. Liang \ Department of Mathematics, Lanzhou Jiaotong University, Lanzhou 730070, China}
\email{lliangnju@gmail.com}
\urladdr{https://sites.google.com/site/lliangnju}
\author[N.N. Yu]{Nina Yu}
\address{N.N. Yu \ School of Mathematical Sciences, Xiamen University, Xiamen 361005, China}
\email{ninayu@xmu.edu.cn}
\begin{document}

\begin{abstract}
Representations over diagrams of abelian categories unify quite a few notions appearing widely in literature such as representations of categories, presheaves of modules over categories, representations of species, etc. In this series of papers we study them systematically, characterizing special homological objects in representation category and constructing various structures (such as model structures and Wandhuasen category strcutres) on it. In the first paper we investigate the Grothendieck structure of the representation category, describe important functors and adjunction relations between them, and characterize special homological objects. These results lay a foundation for our future works.
\end{abstract}

\maketitle

\section*{Introduction}
\noindent
\noindent
\noindent
{\bf Motivation.}
The work described in this series of papers was initiated by the following result given in \cite{Sergio2015} and \cite{Olsson07}: for an affine scheme $(X,\mathcal{O}_X)$ and a poset $\calP$ of affine open subsets in $X$ ordered by inclusion, the descent property of quasi-coherent $\mathcal{O}_X$-modules produces an equivalence between the category of quasi-coherent sheaves on $X$ and the category of Cartesian $\mathcal{O}_X$-modules on $\calP$, and under a moderate condition, the second category is Grothendieck. We note that the above result can be interpreted in the framework of 2-categories. Explicitly, the small category $\calP$ can be viewed as a 2-category in a natural way and presheaves of commutative rings over $\calP$ can be viewed as contravariant 2-functors. This is a very special case of the notion of \textit{$\mathcal{I}$-diagrams of categories} with $\calI$ a small category, which appears widely in the literature under different names such as \textit{$\mathcal{I}$-indexed categories} by Johnstone \cite{Joh02} and \textit{pseudo lax functors} by Street in \cite{St72}. Diagrams of categories are also closely related to Grothendieck constructions, which provide a classical correspondence between diagrams of categories and coCartesian fibrations over the index category. This machinery was first applied to diagrams of sets by Yoneda and later developed in full generality by Grothendieck in \cite{G64}; see also \cite[I.5]{MM92}.

Representations over diagrams of categories were studied by many people under different names. For instances, they are called \textit{twisted representations} by Gothen and King in \cite{GK05}, \textit{twisted diagrams} by H\"{u}ttemann and R\"{ondigs} in \cite{HR08}, and representations of diagrams by Mozgovoy in \cite{Mozgovoy20}. Explicitly, given an $\calI$-diagram $\scrD$ of categories, a \textit{representation} $M$ over $\scrD$ is a rule to assign
\begin{itemize}
\item an object $M_i \in \scrD_i$ to any $i \in \Ob(\calI)$, and

\item a structural morphism $M_\alpha: \scrD_{\alpha}(M_i) \to M_j \in \scrD_j$ to
      any $\alpha: i \to j \in \Mor(\calI)$
\end{itemize}
such that two axioms are satisfied; see Definition \ref{DF OF R-M}. Denote by  $\scrD \lRep$ the category of all representations over $\scrD$. We notice that $\scrD \lRep$ unifies many special cases such as comma categories, module categories of Morita context rings, categories of additive functors from $\calI$ to an abelian category (which are called \textit{representations} of $\calI$ by representation theorists), categories of representations of (generalised) species and phyla studied in e.g. \cite{DR76, Gabriel1973, GLS17}. Another example comes from a work by Estrada and Virili \cite{SS2017}. Roughly speaking, given an $\calI$-diagram $R$ of associative rings\footnote{It is a covariant functor from $\calI$ to the category of associative rings, and is called a \textit{representation} of $\calI$ in that paper. To avoid possible confusions, we call it an \textit{$\calI$-diagram of associative rings}.}, we can construct an $\calI$-diagram $\overline{\scrR}$ of module categories with
\begin{itemize}
\item $\overline{\scrR}_i = R_i \lMod$, the category of left $R_i$-modules,
      for any $i \in \Ob(\calI)$ and

\item $\overline{\scrR}_\alpha = R_j \otimes_{R_i} -$
      for any $\alpha: i \to j \in \Mor(\calI)$
\end{itemize}
such that $\overline{\scrR} \lRep$ coincides with the category $R \lMod$ of left $R$-modules in the sense of \cite{SS2017}\footnote{We shall remind the reader that in our paper the abelian category $\overline{\scrR}_i$ is the left $R_i$-module category $R_i \lMod$ rather than the right $R_i$-module category $\rMod R_i$ used in \cite{SS2017}.}.

As we mentioned above, diagrams of categories and representations over them provide a uniform framework for research works in numerous areas, and people have considered them for many special cases. Thus it is desirable to study them systematically, in particular special homological objects in the category of representations, and various important structures (such as model structures and Wandhausen category structures) on the category of representations. We will do this in a series of papers. In the first paper, we will consider the following questions:
\begin{rqm}
\item describe conditions such that $\scrD \lRep$ is Grothendieck;

\item construct important functors and establish adjunction relations between them;

\item classify or characterize objects with special homological properties such as projective objects and injective objects,
\end{rqm}
whose answers lay a foundation for our later works appearing in next papers.

\vspace{0.5em}
\noindent
{\bf Abelian structure of $\scrD \lRep$.} Under the assumptions that the $\calI$-diagram $\scrD$ of abelian categories is \textit{right exact} (that is, $\scrD_\alpha$ is right exact for any $\alpha \in \Mor(\calI)$) and \textit{strict} (that is, $\scrD$ is a functor from $\calI$ to the meta 2-category of abelian categories rather than a pseudo-functor), Mozgovoy proved in \cite{Mozgovoy20} that $\scrD \lRep$ is abelian, and constructed a class of projective objects in this category; see \cite[Corollaries 2.9 and 2.13]{Mozgovoy20}. In \cite{SS2017}, Estrada and Virili showed that under some conditions the category $R \lMod$ of left $R$-modules, where $R$ is an $\calI$-diagram of associative rings, is a Grothendieck category, while it has a projective generator when $\calI$ is a poset; see \thmcite[3.18]{SS2017}. Our first main result shows that $\scrD \lRep$ is abelian or even Grothendieck under weaker conditions.

\begin{intthm}\label{thmA}
Let $\scrD$ be a right exact $\calI$-diagram of abelian categories. Then $\scrD \lRep$ is an abelian category. If further $\scrD_\alpha$ preserves small coproducts for any $\alpha \in \Mor(\calI)$ and $\scrD_i$ is a Grothendieck category $($resp., a Grothendieck category with a set of projective generators$)$ for any $i \in \Ob(\calI)$, then $\scrD \lRep$ is a Grothendieck category $($resp., a Grothendieck category with a set of projective generators$)$.
\end{intthm}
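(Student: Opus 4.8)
The plan is to show that all the (co)limit constructions we need are computed \emph{levelwise} over the abelian categories $\scrD_i$, so that the exactness hypotheses on $\scrD$ enter only through the structural morphisms. For the first assertion: finite biproducts are formed by $(\bigoplus_k M^{(k)})_i=\bigoplus_k M^{(k)}_i$, using that each $\scrD_\alpha$ (being right exact, hence additive) preserves finite biproducts. Given $\phi\colon M\to N$ I would set $(\operatorname{Ker}\phi)_i=\operatorname{Ker}\phi_i$ with inclusion $\iota_i$; since $\phi_j\circ M_\alpha\circ\scrD_\alpha(\iota_i)=N_\alpha\circ\scrD_\alpha(\phi_i\iota_i)=0$, the morphism $M_\alpha\circ\scrD_\alpha(\iota_i)$ factors uniquely through $\operatorname{Ker}\phi_j\hookrightarrow M_j$, which supplies $(\operatorname{Ker}\phi)_\alpha$. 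Dually $(\operatorname{Coker}\phi)_i=\operatorname{Coker}\phi_i$: here is the one place the hypothesis ``$\scrD$ right exact'' is used, since right exactness of $\scrD_\alpha$ gives $\scrD_\alpha(\operatorname{Coker}\phi_i)=\operatorname{Coker}(\scrD_\alpha\phi_i)$, and $\pi_j\circ N_\alpha$ kills $\scrD_\alpha(\phi_i)$ by naturality of $\phi$, hence factors uniquely through $\scrD_\alpha(\operatorname{Coker}\phi_i)$, supplying $(\operatorname{Coker}\phi)_\alpha$. After checking that these data satisfy the identity and composition axioms of a representation and have the right universal properties in $\scrD\lRep$, one gets that each evaluation functor $\eva_i\colon\scrD\lRep\to\scrD_i$, $M\mapsto M_i$, is faithful and exact; in particular $\phi$ is mono (resp.\ epi) exactly when every $\phi_i$ is. Then the axioms ``every mono is a kernel'' and ``every epi is a cokernel'' for $\scrD\lRep$ follow by transporting the corresponding facts in each $\scrD_i$ along the evaluation functors (which are jointly faithful, jointly conservative, and exact), so $\scrD\lRep$ is abelian.

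For the Grothendieck part I would first upgrade the levelwise description from finite (co)limits to arbitrary small colimits. The key point is that an additive right exact functor that preserves small coproducts preserves all small colimits (a colimit is the coequalizer of a map between coproducts); so under the extra hypothesis each $\scrD_\alpha$ preserves small colimits, and the same bookkeeping as above shows that a small colimit in $\scrD\lRep$ is computed by $(\colim_k M^{(k)})_i=\colim_k M^{(k)}_i$, with structural morphisms assembled via $\scrD_\alpha(\colim_k M^{(k)}_i)\cong\colim_k\scrD_\alpha(M^{(k)}_i)$. Hence $\scrD\lRep$ is cocomplete; and since finite limits and filtered colimits are both levelwise while every $\scrD_i$ satisfies AB5, so does $\scrD\lRep$.

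It remains to produce a set of generators. For each $i\in\Ob(\calI)$ I would construct an induction functor $F_i\colon\scrD_i\to\scrD\lRep$, left adjoint to $\eva_i$, by
\[
F_i(X)_j=\coprod_{\alpha\in\calI(i,j)}\scrD_\alpha(X)\qquad(j\in\Ob(\calI)),
\]
with structural morphism for $\beta\colon j\to j'$ the composite $\scrD_\beta\bigl(\coprod_\alpha\scrD_\alpha(X)\bigr)\cong\coprod_\alpha\scrD_\beta\scrD_\alpha(X)\cong\coprod_\alpha\scrD_{\beta\alpha}(X)\to\coprod_{\gamma\in\calI(i,j')}\scrD_\gamma(X)$ that sends the $\alpha$-summand identically onto the $(\beta\alpha)$-summand. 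Preservation of small coproducts by $\scrD_\beta$ is exactly what makes the first isomorphism available, and the coherence of $\scrD$ yields the representation axioms for $F_i(X)$ as well as the natural bijection $\Hom[\scrD\lRep]{F_i(X)}{M}\cong\Hom[\scrD_i]{X}{M_i}$ (a morphism out of $F_i(X)$ is determined, through the structural morphisms, by its $\id_i$-component). Since $\calI$ is small and each $\scrD_i$ has a generating set $\{G_i^\lambda\}_\lambda$, the set $\{F_i(G_i^\lambda)\}_{i,\lambda}$ then generates $\scrD\lRep$: a nonzero $\phi\colon M\to N$ has some $\phi_i\neq0$, hence some $g\colon G_i^\lambda\to M_i$ with $\phi_ig\neq0$, and its adjoint $\widehat{g}\colon F_i(G_i^\lambda)\to M$ satisfies $\eva_i(\phi\circ\widehat{g})=\phi_ig\neq0$. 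This proves $\scrD\lRep$ Grothendieck. For the parenthetical refinement I would use that $\eva_i$ is exact, so its left adjoint $F_i$ carries projectives to projectives; hence if each $\scrD_i$ has a \emph{set} of projective generators $\{P_i^\lambda\}_\lambda$, then $\{F_i(P_i^\lambda)\}_{i,\lambda}$ is a set of projective generators of $\scrD\lRep$.

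The main obstacle is not conceptual but a matter of care. All the genuine labor lies in verifying that each levelwise construction, together with the structural morphisms described above, really is an object of $\scrD\lRep$ --- that is, satisfies the identity and composition axioms --- which forces one to chase the coherence isomorphisms $\scrD_{\id}\cong\id$ and $\scrD_\beta\scrD_\alpha\cong\scrD_{\beta\alpha}$ of the pseudofunctor $\scrD$ through every diagram. I expect the verification that $F_i(X)$ is a well-defined representation and that the induction--evaluation adjunction holds to be the most delicate step, and it is precisely there (and in making colimits levelwise) that the hypothesis ``$\scrD_\alpha$ preserves small coproducts'' is genuinely indispensable.
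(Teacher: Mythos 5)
Your proposal is correct and follows essentially the same route as the paper: componentwise kernels and cokernels (with right exactness used exactly where you use it) for the abelian structure, componentwise colimits for AB5, and the induction functor $F_i=\fre_i$ left adjoint to evaluation to transport (projective) generators from the $\scrD_i$ to $\scrD\lRep$. The paper distributes these steps over Proposition \ref{Rep is abelian}, Proposition \ref{L AB 345}, Lemma \ref{induce generator}, and Theorem \ref{local grenthe induce local grenthen}, but the constructions and the adjunction argument are the ones you give.
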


As an application, we deduce that if each $\scrD_i$ is locally finitely presented, then $\scrD \lRep$ is locally finitely presented as well; see Proposition \ref{local induce local}. Consequently, based on a general fact of Crawley-Boevey \thmcite[1.4(2)]{WCB94}, $\scrD \lRep$ is equivalent to the subcategory of flat objects in the functor category ${\sf Fun} (\FPP(\scrD \lRep)\op,\Ab)$; for details, see Corollary \ref{representation th}.

\vspace{0.5em}
\noindent
{\bf Functors and adjunctions.} Given an $\calI$-diagram $\scrD$ of abelian categories as well as a functor $G: \calI \to \calJ$, one obtains a $\calJ$-diagram $\scrD \circ G$ of abelian categories as well as a \textit{restriction} functor from $\scrD \lRep$ to $\scrD \circ G \lRep$. Moreover, we construct its left adjoint, called the \textit{induction} functor. These functors play a crucial role in the proof of Theorem \ref{thmA}.

Borrowing the idea from commutative algebra, we define \textit{prime ideals} $\mathcal{P}$ of the morphism set of $\calI$; see Subsection \ref{Sec: The stalk functor and its adjunctions}. For each prime ideal $\mathcal{P}$, we obtain a subcategory $\calI/\mathcal{P}$ as well as a $\calI/\mathcal{P}$-diagram $\scrD \circ \iota_{\mathcal{P}}$, where $\iota_{\mathcal{P}}: \calI/\mathcal{P} \to \calI$ is the inclusion functor. Under suitable conditions we can construct the \textit{lift functor} and its left adjoint, called the \textit{cokernel functor}. These functors play a crucial role for us to classify projective objects in $\scrD \lRep$.

\begin{intthm} \label{thmD}
Let $\mathcal{P}$ be a prime ideal of the morphism set of $\calI$ and $\scrD$ a right exact $\calI$-diagram of abelian categories. Then the inclusion functor $\iota_\calP: \calI/\calP \hookrightarrow \calI$ induces a functor
\[
\lif^{\mathcal{P}}: (\scrD \circ \iota_{\mathcal{P}}) \lRep \to \scrD \lRep.
\]
If furthermore $\scrD_i$ satisfies the axiom $\sf{AB3}$  for any $i \in \Ob(\calI/\mathcal{P})$ and $\scrD_\alpha$ preserves small coproducts for any $\alpha\in\Mor(\calI/\mathcal{P})$, then there exists a functor
\[
\C_{\mathcal{P}}: \scrD \lRep \to (\scrD \circ \iota_{\mathcal{P}}) \lRep
\]
which is a left adjoint of $\lif^{\mathcal{P}}$.
\end{intthm}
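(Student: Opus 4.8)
The plan is to build the functor $\lif^{\mathcal{P}}$ by hand on objects and morphisms, verifying the two representation axioms, and then to produce the adjoint $\C_{\mathcal{P}}$ by a fiberwise colimit (cokernel) construction, checking the adjunction via a natural bijection of morphism sets. First I would recall the combinatorial structure attached to a prime ideal $\mathcal{P}$ of the morphism set of $\calI$: the subcategory $\calI/\mathcal{P}$ should have the same objects as $\calI$ (or at least a distinguished subset) and, for each arrow $\alpha\colon i\to j$ of $\calI$ lying outside $\mathcal{P}$, a corresponding arrow, so that $\iota_{\mathcal{P}}\colon\calI/\mathcal{P}\hookrightarrow\calI$ is a faithful inclusion and $\scrD\circ\iota_{\mathcal{P}}$ is again a right exact diagram of abelian categories. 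The construction of $\lif^{\mathcal{P}}$ is then essentially an \emph{extension by zero}: given $N\in(\scrD\circ\iota_{\mathcal{P}})\lRep$, set $(\lif^{\mathcal{P}}N)_i=N_i$ for $i$ in the image of $\iota_{\mathcal{P}}$ and $(\lif^{\mathcal{P}}N)_i=0$ otherwise, with structural morphisms inherited from $N$ along arrows outside $\mathcal{P}$ and taken to be the zero morphism along arrows in $\mathcal{P}$ (this is forced, since the target or source is zero, or because $\scrD_\alpha$ of a zero object is zero by right exactness). The two axioms in Definition~\ref{DF OF R-M} — compatibility with identities and with composition — then follow from the corresponding axioms for $N$ in all cases where no arrow of $\mathcal{P}$ is involved, and hold trivially (all relevant morphisms being zero) otherwise; here the fact that $\mathcal{P}$ is an \emph{ideal} (closed under pre- and post-composition with arbitrary morphisms of $\calI$) is exactly what guarantees that a composite touching $\mathcal{P}$ again lies in $\mathcal{P}$, so the axioms are not violated. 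Functoriality of $\lif^{\mathcal{P}}$ on morphisms is immediate.

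For the adjoint, assuming now that each $\scrD_i$ with $i\in\Ob(\calI/\mathcal{P})$ satisfies $\mathsf{AB3}$ and each $\scrD_\alpha$ with $\alpha\in\Mor(\calI/\mathcal{P})$ preserves small coproducts, I would construct $\C_{\mathcal{P}}M$ for $M\in\scrD\lRep$ by, for each $i\in\Ob(\calI/\mathcal{P})$, taking the appropriate colimit (a cokernel of a map between coproducts indexed by arrows of $\calI$ not in $\calI/\mathcal{P}$ but mapping into $i$) that universally kills all the "extra" structural data of $M$ coming from outside $\calI/\mathcal{P}$; concretely $(\C_{\mathcal{P}}M)_i$ is the largest quotient of $M_i$ on which the restricted diagram structure makes sense, obtained as $\coker\big(\bigsqcup \scrD_\beta(M_{s(\beta)}) \to M_i\big)$ where $\beta$ ranges over arrows of $\calI$ targeting $i$ and not lying in $\calI/\mathcal{P}$. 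The $\mathsf{AB3}$ hypothesis supplies the coproducts and cokernels; the hypothesis that $\scrD_\alpha$ preserves small coproducts is what lets the structural morphisms $(\C_{\mathcal{P}}M)_\alpha$ be well-defined on these colimits for $\alpha\in\Mor(\calI/\mathcal{P})$. One then checks that $\C_{\mathcal{P}}$ is functorial and that the evident map $M\to\lif^{\mathcal{P}}\C_{\mathcal{P}}M$ (in degrees in the image) together with the counit $\C_{\mathcal{P}}\lif^{\mathcal{P}}N\to N$ (which should be an isomorphism, since extending by zero and then taking the cokernel over the now-zero outside part recovers $N$) satisfy the triangle identities; equivalently, I would exhibit directly a natural isomorphism $\Hom[{\scrD\lRep}]{M}{\lif^{\mathcal{P}}N}\cong\Hom[{(\scrD\circ\iota_{\mathcal{P}})\lRep}]{\C_{\mathcal{P}}M}{N}$, by noting that a morphism $M\to\lif^{\mathcal{P}}N$ of representations is precisely a compatible family $M_i\to(\lif^{\mathcal{P}}N)_i$, which for $i\notin\Ob(\calI/\mathcal{P})$ is forced to be the zero map into $0$, and for $i\in\Ob(\calI/\mathcal{P})$ must kill the images of $\scrD_\beta(M_{s(\beta)})$ for $\beta\notin\Mor(\calI/\mathcal{P})$ (because the composite through such a $\beta$ lands in a zero object), hence factors uniquely through $(\C_{\mathcal{P}}M)_i$; conversely any such factored family assembles into a morphism of representations. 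Naturality in both variables is then routine.

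The main obstacle I anticipate is \emph{checking that the structural morphisms of $\C_{\mathcal{P}}M$ are well-defined and satisfy the representation axioms} — that is, showing the colimit construction is compatible with composition in $\calI/\mathcal{P}$. The subtlety is that a structural morphism $(\C_{\mathcal{P}}M)_\alpha\colon\scrD_\alpha((\C_{\mathcal{P}}M)_i)\to(\C_{\mathcal{P}}M)_j$ must be induced from $M_\alpha$ after passing to quotients on both sides, and this requires that $\scrD_\alpha$ sends the subobject of $M_i$ being killed into the subobject of $M_j$ being killed. This in turn relies on a careful interplay between the right exactness of $\scrD$, the coproduct-preservation hypothesis, the second representation axiom for $M$, and — crucially again — the ideal property of $\mathcal{P}$: a composite $\alpha\circ\beta$ with $\beta\notin\calI/\mathcal{P}$ and $\alpha\in\calI/\mathcal{P}$ still lies outside $\calI/\mathcal{P}$, so the "bad" part of $M_i$ is pushed by $\scrD_\alpha$ into the "bad" part of $M_j$. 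Once this compatibility is established, verifying the two axioms for $\C_{\mathcal{P}}M$ and the triangle identities for the adjunction is bookkeeping. A secondary, more technical point is ensuring that all the colimits involved genuinely exist under only $\mathsf{AB3}$ (rather than the full Grothendieck hypothesis) — here I would be careful to phrase the cokernel functor using only finite limits and small coproducts, so that $\mathsf{AB3}$ plus the abelian structure suffices.
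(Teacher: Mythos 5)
Your proposal is correct and follows essentially the same route as the paper: $\lif^{\calP}$ is the extension-by-zero functor with the representation axioms checked case by case via the ideal and primality properties of $\calP$, and $\C_{\calP}(M)_i$ is the cokernel of the canonical map from the coproduct of the $\scrD_\theta(M_{s(\theta)})$ over $\theta\in\calP$ with target $i$, with the structural morphisms descending to the quotients precisely because $\alpha\theta\in\calP$ whenever $\theta\in\calP$; the adjunction is likewise verified in the paper by the explicit Hom-set bijection you describe. (One small point: a morphism $M\to\lif^{\calP}N$ kills the image of $M_\theta$ for $\theta\in\calP$ not because the composite lands in a zero object --- the source $s(\theta)$ may lie in $\Ob(\calI/\calP)$ --- but because the structural morphism $\lif^{\calP}(N)_\theta$ is zero by definition.)
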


A special example of particular interest to us is as follows. Suppose that $\calI$ is a \textit{partially ordered category}\footnote{In the literature some people call them \textit{directed categories} or \textit{weakly directed categories}. In this paper, since we will consider \textit{direct categories} defined in \cite[Definition 5.1.1]{Ho99}, to avoid possible confusion, we call them partially ordered categories.}, that is, the relation that $i \preccurlyeq j$ if $\Hom[\calI]{i}{j}$ is nonempty defines a partial order on $\Ob(\calI)$. For a fixed object $i \in \Ob(\calI)$, $\mathcal{P}_i = \Mor(\calI) \backslash \AU i$ is a prime ideal. Applying the above theorem and imposing certain extra conditions on $\calI$, we obtain a pair of adjoint functors $\sta^i: \scrD_i \to \scrD \lRep$ and $\C_i: \scrD \lRep \to \scrD_i$, which are crucial for us to classify projective objects in $\scrD \lRep$.

Recall that a small category $\calI$ is called a \textit{direct category} if there is a functor $F: \calI \to \zeta$ where $\zeta$ is an ordinal such that $F$ sends non-identity morphisms in $\calI$ to non-identity morphisms in $\zeta$. Opposite categories of direct categories are called \textit{inverse categories}. The following result classifies projective (resp., injective) objects for direct (resp., inverse) categories.
For definitions of $ \Phi(\sf Proj_{\bullet})$ and $\Psi(\sf Inj_{\bullet})$, please see Definitions \ref{s and phi} and \ref{s and psi}.

\begin{intthm}
Let $\scrD$ be a right exact $\calI$-diagram of Grothendieck categories admitting enough projective objects, and suppose that $\scrD_i$ satisfies the axiom $\sf{AB4}^*$ for $i \in \Ob(\calI)$.
\begin{prt}
\item If $\calI$ is a direct category and $\scrD_\alpha$ preserves small coproducts for $\alpha \in \Mor(\calI)$, then
\[
\Prj{\scrD \lRep} = \Phi(\sf Proj_{\bullet}).
\]

\item If $\calI$ is an inverse category and $\scrD$ admits enough right adjoints, then
\[
\Inj{\scrD \lRep} = \Psi(\sf Inj_{\bullet}).
\]
\end{prt}
\end{intthm}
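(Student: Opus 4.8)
I would prove (a) by a transfinite d\'evissage along the direct structure and obtain (b) as its injective counterpart. The prerequisites are at hand. By Theorem~\ref{thmA}, $\scrD\lRep$ is a Grothendieck category with enough projective (resp.\ injective) objects. The evaluation functors $\eva_i\colon\scrD\lRep\to\scrD_i$, $M\mapsto M_i$, are exact, because when $\scrD$ is right exact kernels and cokernels in $\scrD\lRep$ are formed objectwise; in particular a sequence in $\scrD\lRep$ is exact if and only if it is objectwise exact. And since $\AU{i}=\{\id_i\}$ in a direct or inverse category, the prime ideal $\mathcal{P}_i=\Mor(\calI)\smallsetminus\AU{i}$ has $\calI/\mathcal{P}_i$ the one-object, one-morphism category on $i$, so that $(\scrD\circ\iota_{\mathcal{P}_i})\lRep\simeq\scrD_i$ and Theorem~\ref{thmD} supplies, for each $i$, the stalk functor $\sta^i\colon\scrD_i\to\scrD\lRep$ (which sends $N$ to the representation equal to $N$ in degree $i$, zero in every other degree, with vanishing structure maps) together with its left adjoint, the cokernel functor $\C_i\colon\scrD\lRep\to\scrD_i$, $M\mapsto\coker\bigl(\bigoplus_{\beta\colon j\to i,\,j\ne i}\scrD_\beta(M_j)\to M_i\bigr)$. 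Since $\sta^i$ is visibly exact, $\C_i$ preserves projective objects.

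\emph{Part (a), inclusion $\Phi(\mathsf{Proj}_\bullet)\subseteq\Prj{\scrD\lRep}$.} By Definition~\ref{s and phi}, an object of $\Phi(\mathsf{Proj}_\bullet)$ is a direct summand of a representation obtained, along a functor $F\colon\calI\to\zeta$ witnessing directness, by the prescribed assembly of a family of projectives $P_i\in\scrD_i$; this assembly is a coproduct $\bigoplus_i\fre_i(P_i)$, where $\fre_i$ is the left adjoint of the exact functor $\eva_i$ (which exists because each $\scrD_\alpha$ preserves small coproducts). The value of a left adjoint of an exact functor on a projective is projective, and $\Prj{\scrD\lRep}$ is closed under small coproducts and direct summands, so the inclusion follows.

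\emph{Part (a), inclusion $\Prj{\scrD\lRep}\subseteq\Phi(\mathsf{Proj}_\bullet)$.} Let $Q$ be projective. Filter it by the subrepresentations $Q^{(\lambda)}$ supported ``below $\lambda$'': objectwise, $Q^{(\lambda)}_i=Q_i$ if $F(i)<\lambda$, and $Q^{(\lambda)}_i=\sum_\beta\operatorname{im}(Q_\beta)$ (the sum over structure maps $\beta$ into $i$ from objects of strictly smaller $F$-value) if $F(i)\ge\lambda$. This filtration is continuous at limit ordinals and exhausts $Q$, and its layer $Q^{(\lambda+1)}/Q^{(\lambda)}$ has $i$-component $\C_i Q$ for $F(i)=\lambda$ and, when $Q$ is projective, is canonically $\bigoplus_{F(i)=\lambda}\fre_i(\C_i Q)$; since $\C_i$ preserves projectives, each $\C_i Q$ is projective, so every layer lies in $\Phi(\mathsf{Proj}_\bullet)$ and is in particular projective. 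Each successive extension in the filtration therefore splits, and a continuity argument at limit ordinals assembles these splittings into an isomorphism of $Q$ with the coproduct of its layers; hence $Q\in\Phi(\mathsf{Proj}_\bullet)$. (When Definition~\ref{s and phi} is closed under summands one may argue more directly: choosing projective covers $P_i\twoheadrightarrow Q_i$ in the fibres yields an epimorphism $\bigoplus_i\fre_i(P_i)\twoheadrightarrow Q$, objectwise surjective because $\fre_i(P_i)_i\cong P_i$ using $\AU{i}=\{\id_i\}$, off which the projective $Q$ splits.)

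\emph{Part (b) and the main obstacle.} Part (b) is the formal dual: $\calI$ is inverse, $\fre_i$ is replaced by the right adjoint $\ran^i\colon\scrD_i\to\scrD\lRep$ of $\eva_i$ (which exists precisely because $\scrD$ admits enough right adjoints), and coproducts and colimits give way to products and limits throughout; the hypothesis that each $\scrD_i$ satisfies $\sf{AB4}^*$ is exactly what keeps the relevant products of injective objects injective and makes the limit steps of the d\'evissage exact. The two easy inclusions $\Phi(\mathsf{Proj}_\bullet)\subseteq\Prj{\scrD\lRep}$ and $\Psi(\mathsf{Inj}_\bullet)\subseteq\Inj{\scrD\lRep}$ are essentially formal once $\eva_i$ is known exact and the adjunctions of Theorem~\ref{thmD} are available; the real work, and the main obstacle, is the reverse inclusions. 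There the delicate points are: identifying the layer $Q^{(\lambda+1)}/Q^{(\lambda)}$ with $\bigoplus_{F(i)=\lambda}\fre_i(\C_i Q)$, which uses projectivity of $Q$ to rule out relations coming from lower degrees and takes careful bookkeeping of how the structure maps of $Q$ interact with the objectwise (co)kernels; and the transfinite assembly of the split layers, which needs the continuity argument at limit ordinals rather than a single splitting. In part (b) there is the further, genuinely non-formal step of manufacturing $\ran^i$ and its exactness properties out of the hypothesis that $\scrD$ admits enough right adjoints, in effect redoing the relevant half of Theorem~\ref{thmD} on the co-side, and it is there that $\sf{AB4}^*$ is used essentially rather than merely for convenience.
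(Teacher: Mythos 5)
The decomposition into two inclusions is right, but you have attached the easy argument to the hard inclusion and vice versa, and the hard inclusion is left unproved. Definition~\ref{s and phi} does \emph{not} present an object of $\Phi(\mathsf{Proj}_\bullet)$ as a direct summand of a coproduct $\coprod_i\fre_i(P_i)$; it only asks that each $\varphi_i^X$ be a monomorphism with $\coker(\varphi_i^X)$ projective in $\scrD_i$. Deducing from these pointwise conditions that $X$ is projective is precisely where the transfinite d\'evissage is needed: one uses that the monomorphism $\varphi_i^X$ splits (its cokernel being projective), so $X_i\cong\operatorname{im}(\varphi_i^X)\oplus\C_i(X)$, and then induction along the filtration $V_1\subseteq V_2\subseteq\cdots$ of $\Ob(\calI)$ coming from left-rootedness of the direct category assembles these splittings into an isomorphism of $X$ with a coproduct of objects $\fre_i(\C_i(X))$, each projective since $\fre_i$ preserves projectives. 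By asserting this inclusion ``by definition'' you have assumed the conclusion of the theorem's substantive direction.

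Conversely, the inclusion $\Prj{\scrD\lRep}\subseteq\Phi(\mathsf{Proj}_\bullet)$, on which you spend the filtration argument, has a shorter route (your own parenthetical is close to it): a projective $Q$ is a direct summand of some $\coprod_i\fre_i(P_i)$ with $P_i$ projective, $\Phi(\mathsf{Proj}_\bullet)$ is closed under coproducts and summands, and $\C_i$ preserves projectives because its right adjoint $\sta^i$ is exact. But even this route needs the nontrivial input that each $\fre_j(M_j)$ has $\varphi_i^{\fre_j(M_j)}$ a monomorphism for \emph{every} $i$ (in fact an isomorphism for $i\neq j$); that is a genuine computation identifying $\colim_{\sigma\colon t\to i,\,t\neq i}\scrD_\sigma\bigl(\coprod_{\theta\colon j\to t}\scrD_\theta(M_j)\bigr)$ with $\coprod_{\alpha\colon j\to i}\scrD_\alpha(M_j)$ via a composition of induction functors, and it appears nowhere in your write-up. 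Your filtration argument for this direction, meanwhile, hinges on identifying the layer $Q^{(\lambda+1)}/Q^{(\lambda)}$ with $\coprod_{F(i)=\lambda}\fre_i(\C_i Q)$, a step you yourself flag as ``the main obstacle'' and do not carry out; as stated it would already require knowing that $\varphi_i^Q$ is a (split) monomorphism, i.e., the thing being proved. The same criticisms apply verbatim to the dual part (b).
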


We end the introduction with the following notations and conventions.

\begin{bfhpg*}[\bf Notations and Conventions]
Throughout the paper,
\begin{itemize}

\item let $\calI$ be a skeletal small category with the set of objects $\Ob(\calI)$ and the set of morphisms $\Mor(\calI)$;

\item for $\alpha \in \Mor(\calI)$, write $s(\alpha)$ for its source and $t(\alpha)$ for its target;

\item for $i \in \Ob(\calI)$, denote by $e_i$ the identity on $i$, and denote by $\AU i$ the set of endomorphisms on $i$;

\item let $\calI_i$ be the subcategory of $\calI$ consisting of the single object $i$ and the single identity $e_i$;

\item for a quiver $Q$, denote by $Q_0$ the set of vertexes and by $Q_1$ the set of arrows;

\item by the term \emph{``subcategory''} in an abelian category, we always mean a full subcategory which is closed under isomorphisms and contains the zero object;

\item a functor between categories (resp., abelian categories) is always assumed to be covariant (resp., additive and covariant);


\item given an abelian category $\calA$, denote by $\Prj{\calA}$ (resp., $\Inj{\calA}$) the subcategory of $\calA$ consisting of projective (resp., injective) objects.
\end{itemize}
\end{bfhpg*}

\section{Diagrams of categories and representations over them}
\noindent
This section serves as an introduction to diagrams of categories and representations over them. We introduce definitions, examples, and elementary results, which will be extensively used in the rest of this paper.

\subsection{Diagrams of categories} \label{df of dia and exa}

In this subsection we introduce the definition of diagrams of categories, a central concept studied in this paper.

\begin{dfn} \label{generalised diagram}
An $\calI$-\emph{diagram of categories} is a tuple $(\scrD,\eta,\tau)$ (frequently denoted by $\scrD$ for brevity) consisting of the following data:
\begin{itemize}
\item for every $i \in \Ob(\calI)$, a category $\scrD_i$,
\item for every $\alpha: i \to j \in \Mor(\calI)$, a functor $\scrD_\alpha: \scrD_i \to \scrD_j$,
\item for every $i \in \Ob(\calI)$, a natural isomorphism $\eta_i: {\id}_{\scrD_i} \qra \scrD_{{e}_i}$, and
\item for any pair of composable morphisms $\alpha$ and $\beta$ in $\Mor(\calI)$, a natural isomorphism
\[
\tau_{\beta,\alpha}: \scrD_{\beta} \circ \scrD_{\alpha} \qra \scrD_{\beta\alpha}
\]
\end{itemize}
such that the following two axioms are satisfied:

\noindent ({Dia.1}) Given composable morphisms $i \overset{\alpha} \to j \overset{\beta} \to k \overset{\gamma} \to l \in \Mor(\calI)$, there exists a commutative diagram
\[
\xymatrix{
\scrD_{\gamma} \circ \scrD_{\beta} \circ \scrD_{\alpha} \ar[rr]^-{\id_{\scrD_\gamma} \ast \tau_{\beta,\alpha}} \ar[d]_{\tau_{\gamma,\beta} \ast \id_{\scrD_\alpha}} & & \scrD_{\gamma} \circ \scrD_{\beta\alpha} \ar[d]^{\tau_{\gamma, \beta\alpha}}\\
\scrD_{\gamma\beta} \circ \scrD_{\alpha} \ar[rr]^-{\tau_{\gamma\beta, \alpha}} & & \scrD_{\gamma\beta\alpha}
}
\]
of natural isomorphisms, where ``$\ast$'' means the Godment product of natural transformations.

\noindent ({Dia.2}) Given a morphism $i \overset{\alpha} \to j \in \Mor(\calI)$, there exists a commutative diagram
\[
\xymatrix{
 & \scrD_{\alpha} \ar@{=}[dd] \ar[dl]_{\id_{\scrD_{\alpha}} \ast \eta_i} \ar[dr]^{\eta_j \ast \id_{\scrD_{\alpha}}} & & \\
\scrD_{\alpha} \circ \scrD_{e_i} \ar[dr]_{\tau_{\alpha,e_i}} & & \scrD_{e_j} \circ \scrD_{\alpha} \ar[dl]^{\tau_{e_j, \alpha}}\\
& \scrD_{\alpha}
}
\]
of natural isomorphisms.

An $\calI$-diagram $(\scrD, \eta, \tau)$ of categories is said to be \emph{strict} if $\eta_i$ is the identity for any $i \in \Ob(\calI)$, and $\tau_{\beta,\alpha}$ is the identity for any pair of composable morphisms $\alpha$ and $\beta$ in $\Mor(\calI)$.
\end{dfn}

\begin{dfn} \label{df of mor betw diagram}
Let $(\scrD',\eta',\tau')$ and $(\scrD,\eta,\tau)$ be two $\calI$-diagrams of categories. A {\it morphism} $F$ from $\scrD'$ to $\scrD$ consists of the following data:
\begin{itemize}
\item for any $i \in \Ob(\calI)$, a functor $F_i:\scrD'_i \to \scrD_i$, and
\item for any $\alpha: i \to j \in \Mor(\calI)$, a natural isomorphism $F_\alpha: F_j \circ \scrD'_\alpha \to \scrD_\alpha \circ F_i$
\end{itemize}
such that the following two axioms are satisfied:

\noindent ({Mor.1}) Given composable morphisms $i \overset{\alpha} \to j \overset{\beta} \to k \in \Mor(\calI)$, there exists a commutative diagram
\[
\xymatrix{
F_k \circ \scrD'_\beta \circ \scrD'_\alpha \ar[rr]^-{F_\beta \ast \id_{\scrD'_\alpha}} \ar[d]_{\id_{F_k} \ast \tau'_{\beta,\alpha}} & & \scrD_\beta \circ F_j \circ \scrD'_\alpha \ar[rr]^-{\id_{\scrD_\beta} \ast F_\alpha} & & \scrD_\beta \circ \scrD_\alpha \circ F_i \ar[d]^{\tau_{\beta,\alpha} \ast \id_{F_i}}\\
F_k \circ \scrD'_{\beta\alpha} \ar[rrrr]^-{F_{\beta\alpha}} & & & & \scrD_{\beta\alpha} \circ F_i
}
\]
of natural isomorphisms.

\noindent ({Mor.2}) Given an object $i \in \Ob(\calI)$, there exists a commutative diagram
\quad\quad\quad\begin{center}
$\xymatrix{F_i \ar[rr]^{\eta_i\ast\id_{F_i}\quad} \ar[dr]_{\id_{F_i}\ast\eta'_i} &&\scrD_{e_i} \circ F_i\\
&F_i \circ \scrD'_{e_i}\ar[ur]_{F_{e_i}}}$
\end{center}
of natural isomorphisms.
\end{dfn}

\begin{rmk} \label{another explain for diagrams}
The categorically inclined reader may realize that an $\calI$-diagram of categories is exactly a pseudo lax functor from $\calI$, viewed naturally as a 2-category, to the meta 2-category $\CAT$ consisting of all categories (see Street \cite{St72}), a strict $\calI$-diagram is exactly a 2-functor from $\calI$ to $\CAT$, and a morphism between two $\calI$-diagrams is precisely a pseudo natural transformation.
\end{rmk}

\begin{dfn} \label{subdiagrams}
Let $(\scrD, \eta, \tau)$ and $(\scrD', \eta', \tau')$ be $\calI$-diagrams of categories. Then $\scrD'$ is said to be a \emph{subdiagram} of $\scrD$ if there exists a morphism $F: \scrD' \to \scrD$ such that each $F_i: \scrD'_i \to \scrD_i$ is the inclusion functor. In other words, $\scrD'$ is a subfunctor of the pseudo lax functor $\scrD$.
\end{dfn}

If $\scrD'$ is a subdiagram of $\scrD$, then it is easy to check that for $i \in \Ob(\calI)$ and composable $\alpha, \beta \in \Mor(\calI)$, the data $\scrD'_{\alpha}$, $\eta'_i$ and $\tau'_{\alpha, \beta}$ in $\scrD'$ is the restriction of the corresponding data $\scrD_{\alpha}$, $\eta_i$ and $\tau_{\alpha, \beta}$ in $\scrD$.

We call $\scrD$ an \textit{$\calI$-diagram of abelian categories} if $\scrD_i$ is an abelian category for each $i \in \Ob(\calI)$ and the functor $\scrD_\alpha$ is additive for any $\alpha \in \Mor(\calI)$.

\begin{dfn} \label{da with onough dijoints}
Let $\scrD$ be an $\calI$-diagram of abelian categories. Then $\scrD$ is said to be
\begin{prt}
\item \emph{exact} (resp., \emph{right exact}) if the functor $\scrD_\alpha$ is exact (resp., right exact) for any $\alpha \in \Mor(\calI)$;
\item \emph{admitting enough right adjoints} if the functor $\scrD_\alpha$ admits a right adjoint for any $\alpha \in \Mor(\calI)$.

\end{prt}
\end{dfn}

Given an abelian category $\calA$, if we let $\scrD_i = \calA$ for all $i \in \Ob(\calI)$ and $\scrD_\alpha = \id_{\calA}$ for all $\alpha\in\Mor(\calI)$, then $\scrD$ is called a \emph{trivial} $\calI$-diagram of $\calA$, which coincide with covaraint functors from $\calI$ to $\calA$.

\subsection{Representations over diagrams of categories} \label{part:Modules of Diagrams of small categories}
\noindent
In this subsection we introduce representations over an $\calI$-diagram $\scrD$ of categories, give various examples, and consider the abelian structure of the category of representations.

\begin{dfn} \label{DF OF R-M}
Let $(\scrD,\eta,\tau)$ be an $\calI$-diagram of categories. A \textit{representation} $M$ over $\scrD$ consists of the following data:
\begin{itemize}
\item for every $i \in \Ob(\calI)$, an object $M_i \in \scrD_i$, and
\item for every $\alpha: i \to j \in \Mor(\calI)$,
a structural morphism $M_\alpha: \scrD_{\alpha}(M_i) \to M_j \in \scrD_j$
\end{itemize}
such that the following two axioms are satisfied:

\noindent (Rep.1) Given composable morphisms
$i \overset{\alpha} \to j \overset{\beta} \to k \in \Mor(\calI)$,
there exists a commutative diagram
\[
\xymatrix{
\scrD_{\beta\alpha}(M_i) \ar[rr]^-{M_{\beta\alpha}} & & M_k\\
\scrD_{\beta}(\scrD_{\alpha}(M_i)) \ar[u]^{\tau_{\beta,\alpha}(M_i)} \ar[rr]^-{\scrD_{\beta}({M_\alpha})} & & \scrD_{\beta}(M_j) \ar[u]_{M_\beta}
}
\]
in $\scrD_k$, that is, $M_{\beta\alpha} \circ \tau_{\beta,\alpha}(M_i) = {M_\beta} \circ \scrD_{\beta}({M_\alpha})$.

\noindent (Rep.2) Given an object $i \in \Ob(\calI)$, there exists a commutative diagram
\[
\xymatrix{
M_i \ar[rr]^-{\id_{M_i}} \ar[dr]_{\eta_i(M_i)} &  &  M_i\\
 & \scrD_{e_i}(M_i) \ar[ur]_{M_{e_i}}              }
\]
in $\scrD_i$, that is, $M_{e_i} = \eta^{-1}_i(M_i)$.

A morphism $\omega: M \to M'$ between two representations $M$ and $M'$ over $\scrD$ is a family $\{ \omega_i: M_i \to M'_i\}_{i \in \Ob(\calI)}$ of morphisms such that the diagram
\[
\xymatrix{
\scrD_{\alpha}(M_i) \ar[rr]^-{\scrD_{\alpha}(\omega_i)} \ar[d]_{M_{\alpha}} & & \scrD_{\alpha}(M'_i) \ar[d]^{M'_{\alpha}}\\
M_j \ar[rr]^-{\omega_{j}} & & M'_j
}
\]
in $\scrD_j$ commutes for any $\alpha: i \to j \in \Mor(\calI)$.
\end{dfn}

Denote by $\scrD \lRep$ the category of all representations over $\scrD$. In the situation that $\scrD$ is an $\calI$-diagram of abelian categories, it is clear that $\scrD \lRep$ is an additive category with the zero object. Furthermore, if $\scrD'$ is a subdiagram of $\scrD$, then every representation over $\scrD'$ is also a representation of $\scrD$. Consequently, $\scrD' \lRep$ is a full subcategory of $\scrD \lRep$.

We give a few examples appearing in various context.

\begin{exa} \label{commma cate}
Let $F: \calC \to \calD$ be a right exact functor between abelian categories $\calC$ and $\calD$. One can construct an abelian category, denoted by $(F \downarrow \calD)$, whose objects are the morphisms $\sigma: F(C) \to D$ with $C \in \calC$ and $D \in \calD$, and morphisms from the object $\sigma: F(C) \to D$ to the object $\sigma': F(C') \to D'$ are the pair $(\alpha: C \to C', \beta: D \to D')$ of morphisms such that $\beta \circ \sigma = \sigma' \circ F(\alpha)$. Such a category is called a \emph{comma category} in the literature. Examples of comma categories include module categories over triangular matrix rings, morphism categories of abelian categories, etc.

Let $\calI$ be the free category associated to the quiver $(1 \overset{\alpha} \to 2)$. One can define a strict right exact $\calI$-diagram $\scrD$ of abelian categories with $\scrD_1 = \calC$, $\scrD_2 = \calD$ and $\scrD_\alpha = F$. It is evident that the comma category $(F \downarrow \calD)$ coincides with the category $\scrD \lRep$.
\end{exa}

\begin{exa} \label{Morita rings}
Recall that a \emph{Morita context ring} is a matrix
\[
\Lambda = \begin{bmatrix}
R & M\\
N & S
\end{bmatrix}
\]
where $R$ and $S$ are two associative rings, $M$ is an $(S, R)$-bimodule and $N$ is an $(R, S)$-bimodule, together with a morphism
$\phi: N \otimes_S M \to R$
of $(R, R)$-bimodules and a morphism
$\psi: M \otimes_R N \to S$
of $(S, S)$-bimodules. A left module over $\Lambda$ is a quadruple $(X, Y, f, g)$, where $X$ is a left $R$-module, $Y$ is a left $S$-module,
$f: M \otimes_R X \to Y$
is an $S$-module homomorphism, and
$g: N \otimes_S Y \to X$
is an $R$-module homomorphism such that certain compatibility conditions are satisfied;
please refer to \cite{G82} for details.

We observe that the module category $\Lambda \lMod$ of $\Lambda$ can be realized as $\scrD \lRep$ of a special diagram $\scrD$ of abelian categories. Explicitly, let $\calI$ be the free category associated to the quiver
\[
\xymatrix{
i \ar@/^/[r]^{\alpha} & j \ar@/^/[l]^{\beta}
}
\]
and let $\scrD_i = R \lMod$, $\scrD_j = S \lMod$, $\scrD_{\alpha} = M \otimes_R -$, and $\scrD_{\beta} = N \otimes_S -$. The reader can check that a representation over $\scrD$ is precisely a left $\Lambda$-module, so $\Lambda \lMod$ coincides with $\scrD \lRep$.
\end{exa}

\begin{exa} \label{diagram from a phylum}
Gao, K\"{u}lshammer, Kvamme and Psaroudakis introduced in \cite{GKKP20} the notion of a phylum on a quiver $Q = (Q_0, Q_1)$ as an extension of Gabriel's notion of species \cite{Gabriel1973}. Recall from \dfncite[4.1]{GKKP20} that a phylum $U$ on $Q$ consists of the following data:
\begin{itemize}
\item for a vertex $i \in Q_0$, an abelian category $\calA_i$;
\item for an arrow $\alpha: i \to j \in Q_1$, a pair of functors $(F_\alpha: \calA_i \to \calA_j, G_\alpha: \calA_j \to \calA_i)$ such that both $(F_\alpha, G_\alpha)$ and $(G_\alpha, F_\alpha)$ are adjoint pairs.
\end{itemize}
It is clear by comparing definitions that a phylum $U$ on $Q$ is a strict diagram of abelian categories admitting enough right adjoints satisfying the extra condition that each pair of adjoint functors is still an adjoint pair after switching the order. Furthermore, $U$-representations given in \dfncite[4.4]{GKKP20} coincide with representations over this diagram.
\end{exa}

\begin{exa} \label{sergio is an example}
Estrada and Virili introduced in \cite{SS2017} the notion of representations of associative rings, which are diagrams of associative rings in our sense and can be viewed as a common generalization of some important algebraic structures naturally arising in geometric contexts. In the Appendix we will show that the category of left $R$-modules in the sense of Estrada and Virili, where $R$ is a diagram of associative rings, coincides with the category $\overline{\scrR} \lRep$ for a certain diagram $\overline{\scrR}$ of module categories induced by $R$.
\end{exa}

In general the cateogry $\scrD \lRep$ is not abelian even if each $\scrD_i$ is abelian. However, under a mild assumption, we can show:

\begin{prp} \label{Rep is abelian}
Let $\scrD$ be a right exact $\calI$-diagram of abelian categories. Then $\scrD \lRep$ is an abelian category. Moreover, a sequence $M \to N \to K$ in $\scrD \lRep$ is exact if and only if $M_i \to N_i \to K_i$ is exact in $\scrD_i$ for each $i \in \Ob(\calI)$.
\end{prp}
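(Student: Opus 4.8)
The plan is to show that kernels and cokernels in $\scrD\lRep$ can be computed pointwise, so that the evaluation functors $\scrD\lRep\to\scrD_i$, $M\mapsto M_i$, preserve and reflect them; since $\scrD\lRep$ is already additive with a zero object, the abelian axioms and the description of exact sequences then follow formally. The guiding technical principle is that a structural morphism with \emph{target} $X_j$ can be constructed and tested through a monomorphism into $X_j$, while one with \emph{source} $\scrD_\alpha(X_i)$ can be constructed and tested through an epimorphism out of $\scrD_\alpha(X_i)$; right exactness of the $\scrD_\alpha$ is exactly what produces enough such epimorphisms for the cokernel half.

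\emph{Kernels.} Given $\omega\colon M\to M'$ in $\scrD\lRep$, I would set $K_i=\operatorname{Ker}(\omega_i)$ in $\scrD_i$ with inclusion $\kappa_i\colon K_i\hookrightarrow M_i$. For $\alpha\colon i\to j$ in $\Mor(\calI)$ one has $\omega_j\circ M_\alpha\circ\scrD_\alpha(\kappa_i)=M'_\alpha\circ\scrD_\alpha(\omega_i)\circ\scrD_\alpha(\kappa_i)=M'_\alpha\circ\scrD_\alpha(\omega_i\kappa_i)=0$, so $M_\alpha\circ\scrD_\alpha(\kappa_i)$ factors uniquely through $\kappa_j$, yielding $K_\alpha\colon\scrD_\alpha(K_i)\to K_j$. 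Axioms (Rep.1) and (Rep.2) for $(K_i,K_\alpha)$ are then deduced from the corresponding axioms for $M$ together with the naturality of $\tau$ and $\eta$ at the $\kappa_i$, by composing the relevant equalities with the monomorphism $\kappa_k$ (resp. $\kappa_i$). One checks directly that $\kappa=\{\kappa_i\}$ is a morphism $K\to M$, and that it is a kernel of $\omega$: any $\phi\colon L\to M$ with $\omega\phi=0$ factors at each $i$ through $\kappa_i$ uniquely, and the resulting family $L\to K$ is again a morphism of representations (tested through the $\kappa_i$). Note this step uses only additivity of the $\scrD_\alpha$.

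\emph{Cokernels.} Dually, I would set $C_i=\operatorname{Coker}(\omega_i)$ with projection $\pi_i\colon M'_i\twoheadrightarrow C_i$; here right exactness of $\scrD_\alpha$ enters, since it makes $\scrD_\alpha(\pi_i)\colon\scrD_\alpha(M'_i)\to\scrD_\alpha(C_i)$ the cokernel of $\scrD_\alpha(\omega_i)$ and forces each $\scrD_\alpha$ (hence each $\scrD_\beta\scrD_\alpha$, etc.) to preserve epimorphisms. From $\pi_j\circ M'_\alpha\circ\scrD_\alpha(\omega_i)=\pi_j\circ\omega_j\circ M_\alpha=0$ we get a unique factorization $C_\alpha\colon\scrD_\alpha(C_i)\to C_j$ of $\pi_j\circ M'_\alpha$ through $\scrD_\alpha(\pi_i)$. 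Axioms (Rep.1), (Rep.2) for $(C_i,C_\alpha)$ are verified after precomposing the relevant equalities with the epimorphisms $\scrD_\beta(\scrD_\alpha(\pi_i))$ (resp. $\scrD_{e_i}(\pi_i)$), using the naturality of $\tau,\eta$ and the defining relation $C_\gamma\circ\scrD_\gamma(\pi_i)=\pi_{t(\gamma)}\circ M'_\gamma$; then $\pi=\{\pi_i\}\colon M'\to C$ is a cokernel of $\omega$ by the dual of the kernel argument. Consequently $\operatorname{Ker}\omega$, $\operatorname{Coker}\omega$, and hence $\operatorname{Im}\omega=\operatorname{Ker}(\operatorname{Coker}\omega)$ and $\operatorname{Coim}\omega=\operatorname{Coker}(\operatorname{Ker}\omega)$, are all formed pointwise.

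\emph{Abelian axioms and exactness.} A morphism $\omega$ is a monomorphism (resp. epimorphism) in $\scrD\lRep$ iff $\operatorname{Ker}\omega=0$ (resp. $\operatorname{Coker}\omega=0$) iff every $\omega_i$ is one in $\scrD_i$. If $\omega\colon M\to M'$ is a monomorphism, form $\pi=\operatorname{Coker}\omega$ and $\lambda\colon\operatorname{Ker}\pi\to M'$ pointwise; the comparison $M\to\operatorname{Ker}\pi$ restricts at each $i$ to the canonical isomorphism $M_i\xra{\cong}\operatorname{Im}(\omega_i)=\operatorname{Ker}(\pi_i)$, hence is an isomorphism in $\scrD\lRep$ (a morphism of representations that is an isomorphism at every $i$ is an isomorphism, because the pointwise inverses automatically satisfy the compatibility condition). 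Thus every monomorphism is a kernel, dually every epimorphism is a cokernel, so $\scrD\lRep$ is abelian. Finally, for $M\xra{\omega}N\xra{\psi}K$ one has $\psi\omega=0$ iff every $\psi_i\omega_i=0$, and then the induced monomorphism $\operatorname{Im}\omega\hookrightarrow\operatorname{Ker}\psi$ is an isomorphism iff each $\operatorname{Im}(\omega_i)\hookrightarrow\operatorname{Ker}(\psi_i)$ is, i.e. iff $M_i\to N_i\to K_i$ is exact for every $i\in\Ob(\calI)$. I expect the only genuinely laborious part to be the diagram-chasing confirming (Rep.1) and (Rep.2) for the kernel and cokernel representations; once kernels and cokernels are known to be pointwise, the remaining verifications are formal.
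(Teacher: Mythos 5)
Your proof is correct and follows exactly the route the paper sketches: kernels and cokernels are formed componentwise, with right exactness of the $\scrD_\alpha$ invoked precisely to make the cokernel a representation (so that $\scrD_\alpha(\pi_i)$ is again a cokernel, hence an epimorphism through which the structural morphisms can be defined and tested). The paper's own proof is a one-line summary of this argument; your write-up simply supplies the details, including the correct observation that the kernel half needs only additivity.
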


\begin{prf*}
The kernel and cokernel can be defined componentwise, while the right exact condition guarantees that the cokernel defined in this way is indeed a representation in $\scrD \lRep$.
\end{prf*}

Colimits and limits in $\scrD \lRep$, which are defined componentwise, have been considered in \cite[Subsection 2.3]{HR08} under the condition that $\scrD$ admits enough right adjoints. The reader can see that the constructions and properties described there actually work in the slightly more general framework without this extra requirement. The following result gives sufficient conditions such that $\scrD \lRep$ satisfies $\sf{AB}$-axioms.

\begin{prp} \label{L AB 345}
Let $\scrD$ be a right exact $\calI$-diagram of abelian categories. If $\scrD_i$ satisfies the axiom $\sf{AB3^\ast}$ $($resp., $\sf{AB4^\ast})$ for any $i \in \Ob(\calI)$, then so does $\scrD \lRep$. If furthermore $\scrD_\alpha$ preserves small coproducts for any $\alpha \in \Mor(\calI)$ and $\scrD_i$ satisfies the axiom $\sf{AB3}$ $($resp., $\sf{AB4}$, $\sf AB5)$ for any $i \in \Ob(\calI)$, then so does $\scrD \lRep$.
\end{prp}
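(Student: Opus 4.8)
The plan is to reduce each of the six assertions to the corresponding property of the component categories $\scrD_i$, using two features of $\scrD \lRep$ that are already available: first, by Proposition~\ref{Rep is abelian}, a sequence in $\scrD \lRep$ is exact if and only if it is exact in $\scrD_i$ for every $i \in \Ob(\calI)$; second, limits and colimits in $\scrD \lRep$, whenever they exist, are computed componentwise (this is the content of \cite{HR08}, and, as noted in the text, it does not require the ``enough right adjoints'' hypothesis). Thus the genuine work splits into (a) exhibiting the relevant (co)limit componentwise and (b) checking that the componentwise formula really defines a representation, i.e.\ that axioms (Rep.1) and (Rep.2) of Definition~\ref{DF OF R-M} hold; the $\sf{AB4}$-, $\sf{AB4^\ast}$- and $\sf{AB5}$-type statements will then be essentially free.

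\textbf{Products, $\sf{AB3^\ast}$ and $\sf{AB4^\ast}$.} Given a family $\{M^{(\lambda)}\}_\lambda$ of representations over $\scrD$, I would set $(\prod_\lambda M^{(\lambda)})_i = \prod_\lambda M^{(\lambda)}_i$ (which exists by $\sf{AB3^\ast}$ in $\scrD_i$) and take, for $\alpha\colon i\to j$, the structural morphism to be the composite
\[
\scrD_\alpha\Bigl(\prod_\lambda M^{(\lambda)}_i\Bigr)\longrightarrow \prod_\lambda \scrD_\alpha\bigl(M^{(\lambda)}_i\bigr)\xrightarrow{\ \prod_\lambda M^{(\lambda)}_\alpha\ }\prod_\lambda M^{(\lambda)}_j,
\]
where the first arrow is the canonical comparison morphism, which exists for any additive functor with no extra hypothesis on $\scrD$. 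After verifying (Rep.1), (Rep.2) and the universal property one gets $\sf{AB3^\ast}$ for $\scrD \lRep$. Since products are then componentwise and exactness is componentwise, $\sf{AB4^\ast}$ in every $\scrD_i$ transfers to $\scrD \lRep$ immediately.

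\textbf{Coproducts, filtered colimits, $\sf{AB3}$, $\sf{AB4}$, $\sf{AB5}$.} For coproducts the hypothesis that each $\scrD_\alpha$ preserves small coproducts is exactly what is needed: now $(\coprod_\lambda M^{(\lambda)})_i = \coprod_\lambda M^{(\lambda)}_i$, and the structural morphism for $\alpha$ is built from the \emph{isomorphism} $\scrD_\alpha(\coprod_\lambda M^{(\lambda)}_i)\cong\coprod_\lambda \scrD_\alpha(M^{(\lambda)}_i)$ followed by $\coprod_\lambda M^{(\lambda)}_\alpha$ (a mere comparison morphism would point the wrong way, which is why the extra hypothesis is genuinely required here). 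This yields $\sf{AB3}$, and $\sf{AB4}$ follows as before from componentwise exactness plus componentwise coproducts. For $\sf{AB5}$ I would first record the elementary lemma that a right exact functor between abelian categories which preserves small coproducts in fact preserves all small colimits — any colimit of a small diagram is the cokernel of a map between two coproducts — and in particular preserves filtered colimits. Hence filtered colimits in $\scrD \lRep$ are also componentwise, and $\sf{AB5}$ in each $\scrD_i$ transfers.

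\textbf{Main obstacle.} The delicate part is step (b): checking axiom (Rep.1) for the representations just constructed. This is a diagram chase combining the naturality of the comparison/identification morphisms $\scrD_\alpha(\prod)\to\prod\scrD_\alpha$ and $\scrD_\beta\circ\scrD_\alpha(\prod)\to\prod(\scrD_\beta\circ\scrD_\alpha)$ (and their coproduct analogues) with the coherence constraint (Dia.1) relating $\tau_{\gamma\beta,\alpha}$, $\tau_{\gamma,\beta\alpha}$ and the Godement products; keeping the bookkeeping of the $\tau$'s straight is the one place that needs care. A secondary point worth isolating as a standalone lemma, because it is precisely what makes the $\sf{AB5}$ case go through, is the assertion ``right exact $+$ preserves small coproducts $\Rightarrow$ preserves all small colimits''.
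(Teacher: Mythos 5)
Your proposal is correct and follows essentially the same route as the paper: the paper's proof simply invokes the componentwise construction of limits and colimits from \cite{HR08} together with the componentwise criterion for exactness from Proposition~\ref{Rep is abelian}, which is exactly the reduction you carry out (you merely spell out the comparison-morphism details, the direction issue forcing the coproduct-preservation hypothesis, and the ``right exact $+$ preserves coproducts $\Rightarrow$ preserves all small colimits'' step that the paper leaves implicit).
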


\begin{prf*}
By Proposition \ref{Rep is abelian}, $\scrD \lRep$ is an abelian category. Then the conclusion follows from the componentwise constructions of colimits and limits described in \cite{HR08} and the fact that the exactness of a sequence in $\scrD \lRep$ is completely determined by the exactness of the corresponding sequences obtained by restricting it to $\scrD_i$ for $i \in \Ob(\calI)$.
\end{prf*}

\begin{rmk}
We point out a small difference between the constructions of coproducts (colimits) and products (limits). Explicitly, when defining coproducts and colimits, we have to assume that $\scrD_{\alpha}$ preserves small coproducts for every $\alpha \in \Mor(\calI)$. However, when defining products and limits, we don't have to assume that $\scrD_{\alpha}$ preserves small products.
\end{rmk}

\section{Functors between categories of representations}
\label{Sec: The evaluation functor and its adjunctions}
\noindent
In this section we consider functors between representation categories induced by functors between index categories. As a main application of these functors, we give a sufficient criterion such that $\scrD \lRep$ is a Grothendieck category.

\begin{setup*}
Throughout this section, suppose that $(\scrD, \eta, \tau)$ is a right exact $\calI$-diagram of abelian categories unless otherwise specified, so that the category $\scrD \lRep$ is abelian by Proposition \ref{Rep is abelian}, though some results still hold without this assumption.
\end{setup*}

\subsection{The induction and restriction functors} \label{restriction and its left adjoint}

The main task of this subsection is to construct the restriction functor and its left adjoint.  We mention that this has been done by H\"{u}ttemann and R\"{o}ndigs in \cite[Subsection 2.4]{HR08} for diagrams admitting enough right adjoints, and their construction essentially works even without this extra assumption. Therefore, we omit proofs, but give detailed constructions for the convenience of the reader.

\begin{lem} \label{key detail for ext and res functors}
Let $G:\calJ \to \calI$ be a functor between small categories. Then $\scrD \circ G$ is a right exact $\calJ$-diagram with $(\scrD \circ G)_i = \scrD_{G(i)}$ for $i \in \Ob(\calJ)$ and $(\scrD \circ G)_\lambda = \scrD_{G(\lambda)}$ for $\lambda \in \Mor(\calJ)$.
\end{lem}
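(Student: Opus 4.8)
The plan is to transport the structure natural isomorphisms of $\scrD$ along $G$ and then to deduce the axioms (Dia.1) and (Dia.2) for $\scrD\circ G$ by pulling back the corresponding diagrams for $\scrD$. First I would record the assignments already stated: $(\scrD\circ G)_i := \scrD_{G(i)}$ for $i\in\Ob(\calJ)$ and $(\scrD\circ G)_\lambda := \scrD_{G(\lambda)}$ for $\lambda\in\Mor(\calJ)$; these are well defined since $G(\lambda): G(i)\to G(j)$ lies in $\Mor(\calI)$ whenever $\lambda: i\to j$ lies in $\Mor(\calJ)$. Because $G$ is an honest functor between $1$-categories it is strictly unital and associative, i.e.\ $G(e_i)=e_{G(i)}$ and $G(\mu\lambda)=G(\mu)G(\lambda)$ for composable $\lambda,\mu$ in $\calJ$. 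This lets me define the unit isomorphisms $\eta^{\scrD\circ G}_i := \eta^{\scrD}_{G(i)}: \id_{\scrD_{G(i)}}\qra\scrD_{e_{G(i)}}=\scrD_{G(e_i)}=(\scrD\circ G)_{e_i}$ and the composition isomorphisms $\tau^{\scrD\circ G}_{\mu,\lambda} := \tau^{\scrD}_{G(\mu),G(\lambda)}: \scrD_{G(\mu)}\circ\scrD_{G(\lambda)}\qra\scrD_{G(\mu)G(\lambda)}=\scrD_{G(\mu\lambda)}=(\scrD\circ G)_{\mu\lambda}$.

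Next I would check (Dia.1). Given composable morphisms $i\xra{\lambda}j\xra{\mu}k\xra{\nu}l$ in $\calJ$, applying $G$ yields composable morphisms $G(i)\xra{G(\lambda)}G(j)\xra{G(\mu)}G(k)\xra{G(\nu)}G(l)$ in $\calI$. Substituting the definitions above, the square required to commute for $\scrD\circ G$ at $(\lambda,\mu,\nu)$ is, edge by edge, exactly the square (Dia.1) for $\scrD$ at $(G(\lambda),G(\mu),G(\nu))$: its two upper/left edges $\id_{\scrD_{G(\nu)}}\ast\tau^{\scrD}_{G(\mu),G(\lambda)}$ and $\tau^{\scrD}_{G(\nu),G(\mu)}\ast\id_{\scrD_{G(\lambda)}}$ are literally those occurring for $\scrD$, since the functors being whiskered have the form $\scrD_{G(-)}$, and the remaining two edges identify with $\tau^{\scrD}_{G(\nu),G(\mu\lambda)}$ and $\tau^{\scrD}_{G(\nu\mu),G(\lambda)}$. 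Hence commutativity is inherited from $\scrD$. The verification of (Dia.2) is identical in spirit: the triangle for $\scrD\circ G$ at a morphism $\lambda: i\to j$ is the triangle (Dia.2) for $\scrD$ at $G(\lambda): G(i)\to G(j)$, using $G(e_i)=e_{G(i)}$ and $G(e_j)=e_{G(j)}$. This shows $(\scrD\circ G,\eta^{\scrD\circ G},\tau^{\scrD\circ G})$ is a $\calJ$-diagram of categories, and it is one of abelian categories since each $\scrD_{G(i)}$ is abelian and each $\scrD_{G(\lambda)}$ is additive.

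Finally, for right exactness: for every $\lambda\in\Mor(\calJ)$ one has $(\scrD\circ G)_\lambda=\scrD_{G(\lambda)}$ with $G(\lambda)\in\Mor(\calI)$, and the hypothesis that $\scrD$ is right exact says exactly that $\scrD_\alpha$ is right exact for all $\alpha\in\Mor(\calI)$; in particular $\scrD_{G(\lambda)}$ is right exact. Therefore $\scrD\circ G$ is a right exact $\calJ$-diagram. I do not expect a genuine obstacle here; the only point requiring a little care is checking that the whiskered (``$\ast$'') occurrences of $\tau$ and $\eta$ in the axioms match up after composition with $G$, and they do precisely because $G$, being a strict functor of $1$-categories, introduces no nontrivial $2$-cells and hence imposes no coherence constraints beyond those already satisfied by $\scrD$.
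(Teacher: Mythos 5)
Your proposal is correct and is exactly the direct verification the paper has in mind: the paper's entire proof is ``It follows directly from definitions,'' and your write-up simply makes explicit the transported data $\eta^{\scrD\circ G}_i=\eta^{\scrD}_{G(i)}$, $\tau^{\scrD\circ G}_{\mu,\lambda}=\tau^{\scrD}_{G(\mu),G(\lambda)}$ and the observation that the axioms (Dia.1), (Dia.2) and right exactness for $\scrD\circ G$ are instances of those for $\scrD$ at the $G$-images. No discrepancy with the paper's approach.
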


\begin{prf*}
It follows directly from definitions.
\end{prf*}

The restriction functor $G^*: \scrD \lRep \to (\scrD \circ G) \lRep$ is defined as follows: given an object $M$ and a morphism $\omega = \{\omega_i: M_i \to M'_i\}_{i \in \Ob(\calI)}: M \to M'$ in $\scrD \lRep$,
\begin{itemize}
\item for $j \in \Ob(\calJ)$, set $G^*(M)_j$ to be $M_{G(j)}$ in $\scrD_{G(j)}$;

\item for $\lambda: i \to j \in \Mor(\calJ)$, set the structural morphism $G^*(M)_\lambda: (\scrD \circ G)_\lambda(G^*(M)_i) \to G^*(M)_j$ to be the morphism $M_{G(\lambda)}: \scrD_{G(\lambda)}(M_{G(i)}) \to M_{G(j)}$;

\item set $G^*(\omega): G^*(M) \to G^*(M')$ to be $\{\omega_{G(i)}: M_{G(i)} \to M'_{G(i)}\}_{i \in \Ob(\calJ)}$.
\end{itemize}

\begin{lem} \label{l restriction functor along G}
The above construction gives an exact functor $G^*: \scrD \lRep \to (\scrD \circ G) \lRep$.
\end{lem}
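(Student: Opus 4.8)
The plan is to check, one after another, that $G^*$ is well defined on objects, well defined on morphisms, functorial, and exact. Throughout I would use Lemma \ref{key detail for ext and res functors}, which tells us that $\scrD \circ G$ is again a right exact $\calJ$-diagram; unwinding its proof also records that the structural natural isomorphisms of $\scrD \circ G$ are the $G$-images of those of $\scrD$, namely $\eta'_i = \eta_{G(i)}$ for $i \in \Ob(\calJ)$ and $\tau'_{\mu,\lambda} = \tau_{G(\mu),G(\lambda)}$ for composable $\lambda,\mu$ in $\calJ$. This identification is what makes every subsequent verification a verbatim transcription of the corresponding statement for $\scrD$.

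First I would show $G^*(M) \in (\scrD \circ G)\lRep$. For a composable pair $i \overset{\lambda}{\to} j \overset{\mu}{\to} k$ in $\calJ$, functoriality of $G$ gives $G(\mu\lambda) = G(\mu)G(\lambda)$, and the (Rep.1) square demanded of $G^*(M)$ at $(\lambda,\mu)$ is literally the (Rep.1) square of $M$ at $(G(\lambda),G(\mu))$, hence commutes. For $i \in \Ob(\calJ)$ one has $G(e_i) = e_{G(i)}$, so $G^*(M)_{e_i} = M_{e_{G(i)}} = \eta^{-1}_{G(i)}(M_{G(i)}) = (\eta'_i)^{-1}(G^*(M)_i)$ by (Rep.2) for $M$, which is exactly (Rep.2) for $G^*(M)$. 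The same principle handles morphisms: for $\omega$ in $\scrD\lRep$ and $\lambda: i \to j$ in $\calJ$, the square required of $G^*(\omega)$ at $\lambda$ is the naturality square of $\omega$ at $G(\lambda)$, so $G^*(\omega)$ is a morphism in $(\scrD\circ G)\lRep$. Functoriality, i.e. $G^*(\id_M) = \id_{G^*(M)}$ and $G^*(\omega'\omega) = G^*(\omega')G^*(\omega)$, together with additivity of $G^*$, is immediate from the componentwise definition.

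Finally, for exactness I would invoke Proposition \ref{Rep is abelian}. Since both $\scrD$ and $\scrD \circ G$ are right exact, both $\scrD\lRep$ and $(\scrD\circ G)\lRep$ are abelian, and in either a sequence is exact precisely when each of its components is. Given an exact sequence $M \to N \to K$ in $\scrD\lRep$, the $j$-component of $G^*(M) \to G^*(N) \to G^*(K)$ is $M_{G(j)} \to N_{G(j)} \to K_{G(j)}$, which is exact in $\scrD_{G(j)} = (\scrD\circ G)_j$ by assumption; hence $G^*$ carries exact sequences to exact sequences and is therefore exact.

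I do not expect a genuine obstacle here. The only point demanding attention is the bookkeeping of the structural isomorphisms $\eta', \tau'$ of $\scrD \circ G$ and the confirmation that they coincide with the $G$-images of $\eta,\tau$; once that is in hand, the lemma reduces to three one-line translations of the axioms defining $\scrD\lRep$.
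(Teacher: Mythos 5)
Your proof is correct and is exactly the routine componentwise verification the paper has in mind (the paper omits the proof, deferring to the constructions of H\"uttemann--R\"ondigs and declaring it straightforward). Your identification of the structural isomorphisms of $\scrD \circ G$ as $\eta'_i = \eta_{G(i)}$ and $\tau'_{\mu,\lambda} = \tau_{G(\mu),G(\lambda)}$, followed by the reduction of (Rep.1), (Rep.2), naturality, and componentwise exactness via Proposition \ref{Rep is abelian} to the corresponding facts for $\scrD$, is precisely the intended argument.
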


Suppose that $\scrD_i$ satisfies the axiom $\sf AB3$ for $i \in \Ob(\calI)$ and $\scrD_\alpha$ preserves small coproducts for $\alpha \in \Mor(\calI)$. Equivalently, each $\scrD_i$ has small colimits and each $\scrD_\alpha$ preserves small colimits. We can construct a functor $G_!: (\scrD \circ G) \lRep \to \scrD \lRep$, called the \textit{induction functor} of $G$. A more conceptual construction of this functor is given in \cite[Subsection 2.4]{HR08}, where it is called a \emph{twisted left Kan extension}. To help the reader understand this abstract and complicated construction, we give here a more explicit and constructive description, but do not claim originality.
For this purpose, we introduce a special construction of colimits over morphism categories. Explicitly, for $i \in \Ob(\calI)$, let $G/i$ be the over category whose objects are morphisms in $\calI$ starting at $G(j)$ for a certain $j \in \Ob(\calJ)$ and ending at $i$, and morphisms in $G/i$ are morphisms $\beta: j \to j'$ in $\calJ$ such that the following diagram commutes:
\[
\xymatrix{
G(j) \ar[dr]_{\theta} \ar[rr]^{G(\beta)} & & G(j') \ar[dl]^{\theta'}\\
 & i.
}
\]

Given an object $N \in (\scrD \circ G) \lRep$, define $\tilde{N}: G/i \to \scrD_i$ as follows:
\begin{itemize}
\item for $\theta: G(j) \to i$, let $\tilde{N} (\theta) = \scrD_{\theta} (N_j)$;

\item for a morphism $\beta: j \to j'$ from $\theta: G(j) \to i$ to $\theta': G(j') \to i$, let $\tilde{N}(\beta)$ be the composite of the morphisms
\[
\scrD_{\theta} (N_j) = \scrD_{\theta' G(\beta)}(N_j) \cong \scrD_{\theta'} (\scrD_{G(\beta)}(N_j)) \to \scrD_{\theta'} (N_{j'}),
\]
where the last map is obtained by applying $\scrD_{\theta'}$ to the structural map $\scrD_{G(\beta)}(N_j) \to N_{j'}$.
\end{itemize}
It is easy to check that $\tilde{N}$ is indeed a functor, so one can define $\colim_{\theta \in G/i} \tilde{N}(\theta)$. Frequently, we also denote this colimit by $\colim_{\theta \in G/i} \scrD_\theta(N_\bullet)$, where $G(\bullet)$ is the source of $\theta$.

Now we define the induction functor $G_!: (\scrD \circ G) \lRep \to \scrD \lRep$ as follows. Given an object $N$ and a morphism $\sigma: N \to N'$ in $(\scrD \circ G) \lRep$,
\begin{itemize}
\item for $i \in \Ob(\calI)$, set $G_!(N)_i$ to be $\colim_{\theta \in G/i} \scrD_\theta(N_\bullet)$;

\item for $\alpha: i \to j \in \Mor(\calI)$, note that $\alpha \theta \in \Hom[\calI] {G(\bullet)}{j}$ for any $\theta \in \Hom[\calI] {G(\bullet)} i$, so there exists a canonical morphism
\[
s_{\alpha\theta}: \scrD_{\alpha\theta} (N_\bullet) \longrightarrow
\colim_{\delta \in G/j} \scrD_\delta(N_\bullet).
\]
By the universal property of colimits, we can find a unique morphism $\vartheta_\alpha$ such that the diagram
\begin{equation} \label{universal of colimit}
\xymatrix{
\scrD_\alpha (\scrD_\theta (N_\bullet)) \ar[r]^-{\tau_{\alpha,\theta} (N_\bullet)} \ar[d] & \scrD_{\alpha\theta}(N_\bullet)
\ar[d]^{}\\
\colim_{\theta \in G/i} \scrD_\alpha (\scrD_\theta(N_\bullet)) \ar[r]^-{\vartheta_\alpha} & \colim_{\delta \in G/j} \scrD_\delta(N_\bullet)
}
\end{equation}
commutes. On the other hand, by the universal property of colimits again, there exists a morphism $\chi_\alpha$ such that the diagram
\begin{equation} \label{ano universal of colimit}
\xymatrix{
\scrD_\alpha (\scrD_\theta(N_\bullet)) \ar[dr]^{\scrD_\alpha(s_\theta)} \ar[d] \\
\colim_{\theta \in G/i} \scrD_\alpha(\scrD_\theta(N_\bullet)) \ar[r]^-{\chi_\alpha} & \scrD_\alpha(\colim_{\theta \in G/i} \scrD_\theta(N_\bullet))
}
\end{equation}
commutes. Since $\scrD_\alpha$ preserves small colimits, $\chi_\alpha$ is an isomorphism. Thus we set the structural morphism $G_!(N)_\alpha$ to be the composite $\vartheta_\alpha \circ \chi^{-1}_\alpha$;

\item for any object $\theta$ in $G/i$, there exists a morphism $\scrD_\theta (\sigma_\bullet): \scrD_\theta(N_\bullet) \to \scrD_\theta(N'_\bullet)$. By the universal property of colimits, we can find a unique morphism $\omega_i$ such that the diagram
\begin{equation} \label{fina universal of colimit}
\xymatrix{
\scrD_\theta(N_\bullet) \ar[r]^-{\scrD_\theta(\sigma_\bullet)} \ar[d] & \scrD_\theta(N'_\bullet) \ar[d]\\
\colim_{\theta \in G/i} \scrD_{\theta} (N_{\bullet}) = G_!(N)_i \ar[r]^-{\omega_i} & G_!(N')_i = \colim_{\theta \in G/i} \scrD_\theta(N'_\bullet) }
\end{equation}
commutes. Define $G_!(\sigma): G_!(N) \to G_!(N')$ to be $\{\omega_i\}_{i \in \Ob(\calI)}$.
\end{itemize}

\begin{exa} \label{colimits for quivers}
Colimits appearing in the above construction seem mysterious, so let us give an explicit example for illustration. Let $Q = (Q_0, Q_1)$ be a quiver without loops (that is, arrows from a vertex to itself) and oriented cycles (that is, a nontrivial path which is of length at least two and starts and ends at the same vertex), and let $Q' = (Q_0', Q_1')$ be the subquiver obtained by removing from $Q$ a vertex $j$ and all paths through it. Since each quiver can be viewed as a category in a natural way, we obtain an inclusion functor $\iota: Q' \to Q$. Then objects in $\iota/j$ are paths $\gamma: i \to j$ such that $i \neq j$, and morphisms from $\gamma: i \to j$ to $\gamma': i' \to j$ are paths $\beta: i \to i'$ such that $\gamma = \gamma' \beta$ (here we need the assumption that $Q$ has no loops or oriented cycles to guarantee that every path from $i$ to $i'$ in $Q$ is also contained in $Q'$). It is easy to deduce the following observations:
\begin{itemize}
\item $\iota/j$ has a poset structure given by $\gamma \leqslant \gamma'$ if $\gamma = \gamma' \beta$ for a certain $\beta$;

\item each connected component of $\iota/j$ contains a unique arrow $\gamma: \bullet \to j$, which is the terminal object of this component.
\end{itemize}
Consequently, the colimit over $\iota/j$ is actually the coproduct indexed by arrows ending at $j$:
\[
\colim_{\theta \in \iota/j} \scrD_{\theta} (N_{\bullet}) = \coprod_{\theta \in Q_1(\ast, j)} \scrD_{\theta} (N_{\ast}).
\]
\end{exa}

The following result is \cite[Theorem 2.4.1]{HR08}, whose proof essentially holds even if $\scrD$ might not admit enough right adjoint.

\begin{prp} \label{l adjoint along G}
Suppose that $\scrD_i$ satisfies the axiom $\sf AB3$ for $i \in \Ob(\calI)$ and $\scrD_\alpha$ preserves small coproducts for $\alpha \in \Mor(\calI)$. Then $G_!$ defined above is a functor, and is the left adjoint of $G^*$.
\end{prp}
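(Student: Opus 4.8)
The plan is to carry out three verifications in turn. First, that for every $N \in (\scrD \circ G) \lRep$ the tuple $G_!(N)$ really satisfies the representation axioms \textup{(Rep.1)} and \textup{(Rep.2)}, so that it is an object of $\scrD \lRep$; second, that $\sigma \mapsto G_!(\sigma)$ is functorial; and third, that $G_!$ is left adjoint to the restriction functor $G^*$ of Lemma~\ref{l restriction functor along G}. The two standing hypotheses enter at a single place each and at the same spot: that every $\scrD_i$ has small colimits is what makes $\colim_{\theta \in G/i} \scrD_\theta(N_\bullet)$ exist, and that every $\scrD_\alpha$ preserves small colimits is what makes the comparison morphism $\chi_\alpha$ an isomorphism, hence $G_!(N)_\alpha = \vartheta_\alpha \circ \chi_\alpha^{-1}$ a well-defined morphism.

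For the representation axioms I would argue leg-by-leg on the universal cocone. Axiom \textup{(Rep.2)} asserts $G_!(N)_{e_i} = \eta_i^{-1}(G_!(N)_i)$; precomposing both sides with $\scrD_{e_i}(s_\theta)$ for each $\theta \in G/i$ and unwinding $\vartheta_{e_i}$ and $\chi_{e_i}$, this reduces to the identity $\tau_{e_i,\theta}(N_\bullet) = \eta_i^{-1}(\scrD_\theta(N_\bullet))$, which is exactly \textup{(Dia.2)} for $\theta$ together with naturality of $\eta_i$. For \textup{(Rep.1)} with $i \xrightarrow{\alpha} j \xrightarrow{\beta} k$ in $\calI$, both $G_!(N)_{\beta\alpha}\circ\tau_{\beta,\alpha}(G_!(N)_i)$ and $G_!(N)_\beta\circ\scrD_\beta(G_!(N)_\alpha)$ are maps out of $\scrD_\beta(\scrD_\alpha(G_!(N)_i))$; since $\scrD_\alpha$ and $\scrD_\beta$ preserve colimits this object is $\colim_{\theta\in G/i}\scrD_\beta\scrD_\alpha\scrD_\theta(N_\bullet)$, so it is enough to check equality after precomposition with each leg $\scrD_\beta\scrD_\alpha(s_\theta)$. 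Unwinding both composites through the defining properties of the $\vartheta$'s and $\chi$'s, this collapses to the identity $\tau_{\beta\alpha,\theta}\circ(\tau_{\beta,\alpha}\ast\id_{\scrD_\theta}) = \tau_{\beta,\alpha\theta}\circ(\id_{\scrD_\beta}\ast\tau_{\alpha,\theta})$ evaluated at $N_\bullet$, which is precisely the coherence square \textup{(Dia.1)} for the triple $G(\bullet)\xrightarrow{\theta}i\xrightarrow{\alpha}j\xrightarrow{\beta}k$; along the way one also notes that the maps $s_{\alpha\theta}$ assemble into a cocone, a smaller instance of the same bookkeeping. Functoriality of $G_!$ is then quick: that the family $\{\omega_i\}$ defining $G_!(\sigma)$ commutes with the structural morphisms of $G_!(N)$ and $G_!(N')$ is once more a leg-by-leg check using naturality of $\tau$ and $\chi$, and preservation of identities and composites is immediate from the uniqueness clause in the universal property of colimits.

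For the adjunction I would produce the unit together with an inverse to the comparison map on hom-sets. Define $u_N \colon N \to G^*G_!(N)$ componentwise: let $u_{N,j}$ be the composite $N_j \xrightarrow{\,\eta_{G(j)}(N_j)\,} \scrD_{e_{G(j)}}(N_j) = \tilde N(e_{G(j)}) \xrightarrow{\,s_{e_{G(j)}}\,} \colim_{\theta\in G/G(j)}\scrD_\theta(N_\bullet) = (G^*G_!(N))_j$, using the object $e_{G(j)}\in G/G(j)$; that $u_N$ is a morphism in $(\scrD\circ G)\lRep$ and natural in $N$ is a direct check. Now, for $M \in \scrD\lRep$ and $\sigma\colon N\to G^*M$, the family $\{\,\scrD_\theta(N_\bullet)\xrightarrow{\scrD_\theta(\sigma_\bullet)}\scrD_\theta(M_{G(\bullet)})\xrightarrow{M_\theta}M_i\,\}_{\theta\in G/i}$ is a cocone under $\tilde N$ with apex $M_i$ --- compatibility along a morphism $\beta$ of $G/i$ being precisely \textup{(Rep.1)} for $M$ applied to $\theta'\circ G(\beta)=\theta$, naturality of $\tau$, and the fact that $\sigma$ is a morphism of representations --- so it induces a unique $\hat\sigma_i\colon G_!(N)_i\to M_i$. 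One checks that $\hat\sigma=\{\hat\sigma_i\}$ is a morphism in $\scrD\lRep$ (reducing, via $\chi_\alpha$ an isomorphism and the definition of $\vartheta_\alpha$, to a leg-by-leg identity involving $\tau$ and \textup{(Rep.1)} for $M$), and that $\sigma\mapsto\hat\sigma$ is two-sided inverse to $f\mapsto G^*(f)\circ u_N$: one composite collapses because $u_N$ selects the $e_{G(j)}$-leg and $M_{e_{G(j)}}=\eta^{-1}$, the other by the uniqueness clause in the colimit universal property, both maps agreeing on every leg $s_\theta$. Naturality in $N$ and $M$ of the resulting bijection $\Hom[\scrD\lRep]{G_!(N)}{M}\cong\Hom[(\scrD\circ G)\lRep]{N}{G^*M}$ is then routine. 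The step I expect to be the main obstacle is the pair of ``is a morphism of representations'' checks --- that $G_!(N)$ satisfies \textup{(Rep.1)} and that $\hat\sigma$ respects structural morphisms. Each is a diagram chase, but a genuinely fiddly one: it juggles two or three colimits simultaneously ($G/i$ with $G/j$, or $G/i$, $G/j$, $G/k$), the non-strict coherence data $\eta,\tau$ of the pseudo-functor $\scrD$, and the comparison isomorphisms $\chi_\alpha$, and the real work is tracking which universal property is invoked where and composing the resulting uniqueness statements in the correct order. When $\scrD$ is strict, every $\eta$, $\tau$ and $\chi$ is an identity and these checks become nearly formal, so the entire content of the proposition lies in handling the pseudo-functoriality cleanly.
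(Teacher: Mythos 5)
Your proposal is correct, and it supplies in full the verification that the paper itself omits: the paper's ``proof'' of this proposition is a pointer to \cite[Theorem 2.4.1]{HR08}, where $G_!$ is packaged as a twisted left Kan extension, together with the remark that the argument survives without the enough-right-adjoints hypothesis. Your route is the elementary one, working directly with the explicit colimit formulas the paper sets up, and it checks out at every point I probed: the reduction of (Rep.2) to (Dia.2) via naturality of $\eta_i$ and the identification $s_{e_i\theta}=s_\theta$; the reduction of (Rep.1), after precomposing with the jointly epimorphic legs $\scrD_\beta\scrD_\alpha(s_\theta)$, to the coherence square (Dia.1) for the triple $(\beta,\alpha,\theta)$; the cocone compatibility for $\hat\sigma$ via (Rep.1) for $M$ and the hypothesis that $\sigma$ is a morphism of representations; and the two triangle computations, where $G^*(\hat\sigma)\circ u_N=\sigma$ collapses by (Rep.2) for $M$ and the other composite is handled by the uniqueness clause after checking agreement on each leg $s_\theta$ (using (Dia.2) once more to cancel $\tau_{\theta,e_{G(j)}}\circ\scrD_\theta(\eta_{G(j)})$). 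Two small points worth making explicit if you write this up: first, the stated hypotheses (AB3 plus preservation of small coproducts) only upgrade to ``has/preserves all small colimits'' because of the standing assumption that $\scrD$ is right exact, which the paper notes but which your phrase ``the two standing hypotheses enter at a single place each'' slightly obscures; second, when you treat $\scrD_\beta(\scrD_\alpha(G_!(N)_i))$ as the colimit of $\scrD_\beta\scrD_\alpha\scrD_\theta(N_\bullet)$ you are silently composing two comparison isomorphisms of the form $\chi$, which is harmless but should be said. Compared with the Kan-extension formulation, your version buys a self-contained proof and makes visible exactly where the pseudo-functoriality data $\eta$, $\tau$ and the isomorphisms $\chi_\alpha$ are consumed, at the cost of the lengthier diagram chases you correctly flag as the fiddly part.
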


\begin{rmk}\label{coinduction}
When the $\calI$-diagram $\scrD$ admits enough right adjoints and each $\scrD_i$ satisfies the axiom $\sf AB3^*$, one can construct dually the right adjoint functor of $G^{\ast}$, which is called the \emph{coinduction functor} of $G$, by using the right adjoint of $\scrD_{\alpha}$ for $\alpha \in \Mor(\calI)$.
\end{rmk}

Now we apply the general results to a special case. Fix $i \in \Ob(\calI)$ and let $\calI_i$ be the subcategory of $\calI$ consisting of the single object $i$ and the single identity $e_i$. Note that there exists an obvious isomorphism between $(\scrD \circ \iota_i) \lRep$ and $\scrD_i$, where $\iota_i: \calI_i \to \calI$ is the canonical inclusion. 
We define the \emph{evaluation functor} \emph{at} $i$ to be the composite
\[
{\sf eva}^i: \scrD \lRep \overset{(\iota_i)^{\ast}} \longrightarrow (\scrD \circ \iota_i) \lRep \overset{\simeq} \longrightarrow  \scrD_i
\]
sending a representation $M$ over $\scrD$ to its ``local" value $M_i$ in $\scrD_i$. If $\scrD_j$ satisfies the axiom $\sf{AB3}$ for $j \in \Ob(\calI)$ and $\scrD_\alpha$ preserves small coproducts for $\alpha \in \Mor(\calI)$,
then by Proposition \ref{l adjoint along G}, the inclusion $\iota_i: \calI_i \to \calI$ induces a functor
\[
\fre_i: \scrD_i \overset{\simeq} \longrightarrow (\scrD \circ \iota_i) \lRep \overset{(\iota_i)_!} \longrightarrow \scrD \lRep.
\]
The functor $\fre_i$ has a very simple description. Indeed, since $\calI_i$ contains only one morphism, the over category $\iota_i/j$ for any $j \in \Ob(\calI)$ is discrete. Therefore, all colimits appearing in that construction become coproducts. That is, for $M_i \in \scrD_i$ and $j \in \Ob(\calI)$, one has
\[
(\fre_i(M_i))_j = \coprod_{\theta \in {\Hom[\calI]ij}} \scrD_{\theta} (M_i).
\]

\begin{cor} \label{l adjoint of evalution and free functor}
Let $i$ be an object in $\Ob(\calI)$. Suppose that $\scrD_j$ satisfies the axiom $\sf{AB3}$ for $j \in \Ob(\calI)$ and $\scrD_\alpha$ preserves small coproducts for $\alpha \in \Mor(\calI)$. Then $(\fre_i, \eva^i)$ is an adjoint pair.
\end{cor}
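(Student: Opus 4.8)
The plan is to obtain the adjunction by composing the adjunction $((\iota_i)_!,(\iota_i)^\ast)$ furnished by Proposition \ref{l adjoint along G} with the canonical identification of $(\scrD\circ\iota_i)\lRep$ with $\scrD_i$. First I would record precisely what this identification is: since the subcategory $\calI_i$ has only the object $i$ and only the morphism $e_i$, a representation $N$ over $\scrD\circ\iota_i$ consists of an object $N_i\in\scrD_i$ together with a structural morphism $N_{e_i}\colon\scrD_{e_i}(N_i)\to N_i$, which by axiom (Rep.2) is forced to equal $\eta_i^{-1}(N_i)$, while a morphism of such representations is merely a morphism in $\scrD_i$. Hence $M_i\mapsto(M_i,\eta_i^{-1}(M_i))$ defines an isomorphism of categories $E\colon\scrD_i\xra{\simeq}(\scrD\circ\iota_i)\lRep$ whose inverse $E^{-1}$ is the forgetful functor $N\mapsto N_i$. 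By the very definitions of the evaluation and free functors one then has $\eva^i=E^{-1}\circ(\iota_i)^\ast$ and $\fre_i=(\iota_i)_!\circ E$.

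Next I would apply Proposition \ref{l adjoint along G} to the inclusion $G=\iota_i\colon\calI_i\to\calI$. Its hypotheses are exactly the standing assumptions of the corollary (each $\scrD_j$ satisfies $\sf{AB3}$ for $j\in\Ob(\calI)$, and each $\scrD_\alpha$ preserves small coproducts for $\alpha\in\Mor(\calI)$), so $(\iota_i)_!$ is a well-defined functor and is left adjoint to $(\iota_i)^\ast$. As a consistency check, for this particular $G$ the over categories $\iota_i/j$ are discrete, so the colimits in the construction of $(\iota_i)_!$ collapse to coproducts, which recovers the explicit formula $(\fre_i(M_i))_j=\coprod_{\theta\in\Hom[\calI]{i}{j}}\scrD_\theta(M_i)$ quoted above.

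Finally, for $M_i\in\scrD_i$ and $M\in\scrD\lRep$ I would chain the adjunction bijection $\Hom[\scrD\lRep]{(\iota_i)_!(E(M_i))}{M}\cong\Hom[(\scrD\circ\iota_i)\lRep]{E(M_i)}{(\iota_i)^\ast M}$ of Proposition \ref{l adjoint along G} with the bijection $\Hom[(\scrD\circ\iota_i)\lRep]{E(M_i)}{(\iota_i)^\ast M}\cong\Hom[\scrD_i]{M_i}{E^{-1}((\iota_i)^\ast M)}$ induced by the isomorphism $E$; since the source of the first is $\Hom[\scrD\lRep]{\fre_i(M_i)}{M}$ and the target of the second is $\Hom[\scrD_i]{M_i}{\eva^i M}$, and both bijections are natural in $M_i$ and in $M$, this exhibits $(\fre_i,\eva^i)$ as an adjoint pair. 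I do not expect a genuine obstacle here: the only mildly non-formal point is the explicit identification $(\scrD\circ\iota_i)\lRep\simeq\scrD_i$ via axiom (Rep.2), and once that is in place the statement is the standard fact that pre- and post-composing an adjunction with equivalences of categories again yields an adjunction.
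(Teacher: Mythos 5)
Your proposal is correct and follows exactly the route the paper intends: it applies Proposition \ref{l adjoint along G} to the inclusion $\iota_i\colon\calI_i\to\calI$ and transports the resulting adjunction $((\iota_i)_!,(\iota_i)^\ast)$ along the canonical isomorphism $(\scrD\circ\iota_i)\lRep\cong\scrD_i$, which is precisely how the paper defines $\fre_i$ and $\eva^i$ as composites. Your explicit justification that axiom (Rep.2) forces the structural morphism $N_{e_i}=\eta_i^{-1}(N_i)$, making the identification an isomorphism of categories, is a welcome detail the paper leaves implicit.
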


\subsection{Grothendieck structure}\label{Grothendieck structure}
\noindent
The evaluation functor and its left adjoint serve as a bridge connecting $\scrD \lRep$ and $\scrD_i$'s. In this subsection, we establish a few fundamental results on the structure of $\scrD \lRep$. In particular, local Grothendieck structures on $\scrD_i$'s can amalgamate to a Grothendieck structure on $\scrD \lRep$, and under certain conditions, $\scrD \lRep$ is locally finitely presented. Consequently, we can obtain a generalization of the classical representation theorem of Makkai and Par\'{e} \cite{MP89} (see also \cite{AdamekRosicky} and \cite{WCB94}).

\begin{lem} \label{induce generator}
Suppose that $\scrD_i$ satisfies the axiom $\sf{AB3}$ for $i \in \Ob(\calI)$ and $\scrD_\alpha$ preserves small coproducts for $\alpha \in \Mor(\calI)$.
If $\scrD_i$ has a set of generators (resp., projective generators) for $i \in \Ob(\calI)$, then so does $\scrD \lRep$.
\end{lem}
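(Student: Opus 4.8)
The plan is to transport generators (resp.\ projective generators) from the fibers $\scrD_i$ along the free functors $\fre_i$, using the adjunction $(\fre_i, \eva^i)$ from Corollary \ref{l adjoint of evalution and free functor} together with the fact, recorded in Proposition \ref{L AB 345}, that $\scrD \lRep$ satisfies $\sf{AB3}$ under the stated hypotheses. First I would fix, for each $i \in \Ob(\calI)$, a set $\calG_i$ of generators of $\scrD_i$ and form the set $\calG = \{\fre_i(G) \mid i \in \Ob(\calI),\ G \in \calG_i\}$ in $\scrD \lRep$; since $\Ob(\calI)$ is a set (the index category is small) and each $\calG_i$ is a set, $\calG$ is a set. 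The claim is that $\calG$ generates $\scrD \lRep$. The standard criterion is: a set $\calG$ generates an abelian category with coproducts iff for every nonzero morphism $f: M \to N$ there is some $P \in \calG$ and a morphism $P \to M$ whose composite with $f$ is nonzero; equivalently, iff for every proper subobject $M' \subsetneq M$ there is some $P \in \calG$ and a morphism $P \to M$ not factoring through $M'$.

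The key step is to reduce a nonzero test to a single fiber. Suppose $f: M \to N$ in $\scrD \lRep$ is nonzero. By Proposition \ref{Rep is abelian}, a morphism in $\scrD \lRep$ is nonzero iff it is nonzero in some component, so there exists $i \in \Ob(\calI)$ with $f_i = \eva^i(f): M_i \to N_i$ nonzero in $\scrD_i$. Since $\calG_i$ generates $\scrD_i$, there is $G \in \calG_i$ and a morphism $g: G \to M_i$ in $\scrD_i$ with $f_i \circ g \neq 0$. Now apply the adjunction isomorphism $\Hom[\scrD \lRep]{\fre_i(G)}{M} \cong \Hom[\scrD_i]{G}{\eva^i(M)} = \Hom[\scrD_i]{G}{M_i}$: let $\hat{g}: \fre_i(G) \to M$ correspond to $g$ under this bijection. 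The unit/counit formalism gives $\eva^i(\hat g) \circ (\text{unit component}) = g$, and chasing naturality of the adjunction against $f$ shows $\eva^i(f \circ \hat g) = f_i \circ g \neq 0$, hence $f \circ \hat g \neq 0$ in $\scrD \lRep$. This exhibits $\fre_i(G) \in \calG$ detecting $f$, so $\calG$ is a generating set.

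For the projective case I would additionally use that $\fre_i$ is a left adjoint of the \emph{exact} functor $\eva^i$ (exactness of $\eva^i$ is part of Lemma \ref{l restriction functor along G} applied to $\iota_i$, since exactness of a sequence in $\scrD \lRep$ is checked componentwise by Proposition \ref{Rep is abelian}); a left adjoint of an exact functor preserves projectives, so each $\fre_i(G)$ with $G$ projective in $\scrD_i$ is projective in $\scrD \lRep$. Thus if each $\calG_i$ consists of projective generators, $\calG$ is a set of projective generators of $\scrD \lRep$. The main obstacle I anticipate is purely bookkeeping: verifying carefully that $\eva^i(f \circ \hat g) = f_i \circ g$ via the triangle identities and naturality squares of the adjunction $(\fre_i, \eva^i)$, i.e.\ that the detection property genuinely transports along the adjunction rather than just the mere existence of some nonzero map; once that naturality chase is pinned down the rest is formal. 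One should also double-check the harmless point that $\fre_i(G)$ is a legitimate object of $\scrD \lRep$ (it is, by construction, with components $\coprod_{\theta \in \Hom[\calI]{i}{j}} \scrD_\theta(G)$, which exist precisely because each $\scrD_j$ satisfies $\sf{AB3}$).
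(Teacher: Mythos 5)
Your proposal is correct and follows essentially the same route as the paper: form the set $\{\fre_i(G)\}$, detect a nonzero morphism in some component, transport the detecting map along the adjunction $(\fre_i,\eva^i)$, and use that a left adjoint of an exact functor preserves projectives. The only point needing care, which you already flag, is that $\eva^i(f\circ\hat g)$ equals $f_i\circ g$ only after precomposing with the unit $G\to\eva^i(\fre_i(G))$; this suffices to conclude $f\circ\hat g\neq 0$, exactly as in the paper's argument.
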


\begin{prf*}
Denote by $\mathcal{G}_i$ the set of generators of $\scrD_i$ for $i \in \Ob(\calI)$. We claim that
\[
\fre(\mathcal{G}) = \{ \fre_i(G_i) \, \mid \, G_i \in \mathcal{G}_i, i \in \Ob(\calI) \}
\]
is a set of generators of $\scrD \lRep$.

Let $\omega: M \to M'$ be a non-zero morphism in $\scrD \lRep$. Then there exists $i \in \Ob(\calI)$ such that $\omega_i: M_i \to M'_i$ is non-zero. Since $\mathcal{G}_i$ is a set of generators of $\scrD_i$, one can find a morphism $g_i: G_i \to M_i$ with $G_i \in \mathcal{G}_i$ such that $\omega_i \circ g_i \neq 0$. Clearly, the claim holds if there is a morphism $g: \fre_i(G_i) \to M$ such that $\omega \circ g \neq 0$. But this is clear. Indeed, since $(\fre_i, \eva^i)$ is an adjoint pair by Corollary \ref{l adjoint of evalution and free functor}, the morphism $g_i: G_i \to M_i = \eva^i(M)$ lifts to a unique morphism $g: \fre_i(G_i) \to M$ such that its component corresponded to $i$ is exactly $g_i$. Applying the functor $\eva^i$, one has
\[
\eva^i (\omega \circ g) = \eva^i(\omega) \circ \eva^i(g) = \omega_i \circ g_i \neq 0,
\]
so $\omega \circ g$ is nonzero as well.

In particular, if the objects in $\mathcal{G}_i$ are projective for $i \in \Ob(\calI)$, then $\fre(\mathcal{G})$ is a set of projective generators of $\scrD \lRep$ since $\fre_i$, as the left adjoint of the exact functor $\eva^i$, preserves projectives.
\end{prf*}

As an immediate consequence of Proposition \ref{L AB 345} and Lemma \ref{induce generator}, we have:

\begin{thm} \label{local grenthe induce local grenthen}
Suppose that $\scrD_\alpha$ preserves small coproducts for $\alpha \in \Mor(\calI)$. If $\scrD_i$ is a Grothendieck category (resp., a Grothendieck category with a set of projective generators) for $i \in \Ob(\calI)$, then so is $\scrD \lRep$.
\end{thm}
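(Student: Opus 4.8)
The plan is to deduce the statement directly by combining the two preceding results with the definition of a Grothendieck category. Recall that an abelian category is Grothendieck exactly when it satisfies the axiom $\sf{AB5}$ and has a generator, equivalently a set of generators (in a cocomplete abelian category the coproduct of a set of generators is a single generator). Since $\scrD$ is right exact by the standing Setup of this section, Proposition \ref{Rep is abelian} already gives that $\scrD \lRep$ is abelian; thus it suffices to establish the $\sf{AB5}$ axiom for $\scrD \lRep$ together with the existence of a set of generators, and, in the strengthened version, that these generators may be taken projective.

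First I would note that a Grothendieck category $\scrD_i$ automatically satisfies $\sf{AB5}$, and a fortiori $\sf{AB3}$, so all hypotheses of Proposition \ref{L AB 345} are in force: $\scrD$ is right exact, each $\scrD_\alpha$ preserves small coproducts, and each $\scrD_i$ satisfies $\sf{AB5}$. That proposition then yields that $\scrD \lRep$ satisfies $\sf{AB5}$. Next, each $\scrD_i$ being Grothendieck has a set of generators, and the hypotheses of Lemma \ref{induce generator} hold (each $\scrD_i$ satisfies $\sf{AB3}$ and each $\scrD_\alpha$ preserves small coproducts), so that lemma provides a set of generators $\fre(\mathcal{G})$ for $\scrD \lRep$; passing to its coproduct, which exists since $\scrD \lRep$ satisfies $\sf{AB3}$, gives a single generator. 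Hence $\scrD \lRep$ is a Grothendieck category.

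For the parenthetical strengthening, if in addition each $\scrD_i$ comes with a set of projective generators, the same invocation of Lemma \ref{induce generator} applies, and there the generators of $\scrD \lRep$ are produced as $\fre_i(G_i)$ with $\fre_i$ left adjoint to the exact functor $\eva^i$; hence $\fre_i$ preserves projectivity and $\fre(\mathcal{G})$ is a set of projective generators of $\scrD \lRep$. Combined with the $\sf{AB5}$ conclusion, this shows $\scrD \lRep$ is a Grothendieck category with a set of projective generators.

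I do not anticipate a real obstacle: the statement is an immediate corollary of the two cited results, the substantive content (exactness of filtered colimits via the componentwise constructions of \cite{HR08}, and the adjunction $(\fre_i, \eva^i)$ detecting nonzero morphisms componentwise) having been dispatched there. The only points needing a moment's care are recording that $\sf{AB5}$ subsumes $\sf{AB3}$ so the earlier hypotheses genuinely hold, and the routine passage between a single generator and a set of generators in a cocomplete abelian category.
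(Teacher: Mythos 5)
Your proposal is correct and is exactly the paper's argument: the theorem is stated there as an immediate consequence of Proposition \ref{L AB 345} (giving $\sf{AB5}$ for $\scrD\lRep$) and Lemma \ref{induce generator} (giving a set of (projective) generators). Your additional remarks — that $\sf{AB5}$ subsumes $\sf{AB3}$ so the hypotheses of both cited results hold, and that a set of generators yields a single generator via coproducts — merely make explicit what the paper leaves implicit.
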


\begin{rmk} \label{generator for R-represetations}
Suppose that $\scrD_i$ is a Grothendieck category admitting a set of projective generators for $i\in\Ob(\calI)$ and $\scrD_\alpha$ preserves small coproducts for $\alpha \in \Mor(\calI)$. Then the Grothendieck category $\scrD \lRep$ admits enough projecitves. By the proof of Lemma \ref{induce generator}, it is easy to see that any projective object in $\scrD \lRep$ is isomorphic to a direct summand of a direct sum of members in the family
\[
\{ \fre_i(P_i) \, \mid \, i \in \Ob(\calI) \text{ and } P_i \text{ is projective in } \scrD_i \}.
\]
\end{rmk}

Let $\calA$ be an abelian category satisfying the axiom $\sf{AB3}$. Recall that an object $X$ in $\calA$ is said to be \emph{finitely presented} provided that
the representable functor $\Hom[\calA]{X}{-}$ commutes with filtered colimits. Denote the full subcategory of finitely presented objects by $\FPP (\calA)$.
Recall that $\calA$ is \emph{locally finitely presented} if $\FPP (\calA)$ is skeletally small and every object in $\calA$ is a filtered colimit of finitely presented objects, or equivalently, $\calA$ possesses a set of finitely presented generators; see \cite[Theorem 1.11]{AdamekRosicky}\footnote{By \cite[Theorem 1.11]{AdamekRosicky}, $\calA$ is locally finitely presented if and only if it possesses a set of strong generators formed by finitely presentable objects. But note that in an abelian category, any set of generators is a set of strong generators.}. The following proposition tells us that the locally finitely presented property of each $\scrD_i$ can also be amalgamated to the locally finitely presented property of $\scrD \lRep$.

\begin{prp} \label{local induce local}
Suppose that $\scrD_\alpha$ preserves small coproducts for $\alpha \in \Mor(\calI)$. If $\scrD_i$ is a locally finitely presented for $i \in \Ob(\calI)$, then so is $\scrD \lRep$.
\end{prp}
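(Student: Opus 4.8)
The plan is to produce a set of finitely presented generators for $\scrD\lRep$ and verify that $\FPP(\scrD\lRep)$ is skeletally small, invoking the characterization of local finite presentability recalled just before the statement. Since each $\scrD_i$ is locally finitely presented, it has a set $\calG_i$ of finitely presented generators; the natural candidate is the family $\fre(\calG)=\{\fre_i(G_i)\mid G_i\in\calG_i,\ i\in\Ob(\calI)\}$ built from the free functors $\fre_i$ of Subsection~\ref{restriction and its left adjoint}. By Lemma~\ref{induce generator} this is already a set of generators of $\scrD\lRep$, so the real content is to show that each $\fre_i(G_i)$ is finitely presented in $\scrD\lRep$, i.e. that $\Hom[\scrD\lRep]{\fre_i(G_i)}{-}$ commutes with filtered colimits.

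The key step is an adjunction-plus-componentwise-colimits argument. By Corollary~\ref{l adjoint of evalution and free functor}, $(\fre_i,\eva^i)$ is an adjoint pair, so
\[
\Hom[\scrD\lRep]{\fre_i(G_i)}{M}\cong\Hom[\scrD_i]{G_i}{\eva^i(M)}=\Hom[\scrD_i]{G_i}{M_i}.
\]
Now I would use that filtered colimits in $\scrD\lRep$ are computed componentwise: this is exactly the content recorded in Proposition~\ref{L AB 345} (under the standing hypotheses, $\scrD_i$ satisfies $\sf AB5$ and $\scrD_\alpha$ preserves small coproducts, hence all small colimits, so the componentwise construction of filtered colimits goes through and $\eva^i$ preserves them). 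Therefore, for a filtered system $\{M^{(\lambda)}\}$ in $\scrD\lRep$,
\[
\Hom[\scrD\lRep]{\fre_i(G_i)}{\colim_\lambda M^{(\lambda)}}
\cong\Hom[\scrD_i]{G_i}{(\colim_\lambda M^{(\lambda)})_i}
\cong\Hom[\scrD_i]{G_i}{\colim_\lambda M^{(\lambda)}_i}
\cong\colim_\lambda\Hom[\scrD_i]{G_i}{M^{(\lambda)}_i}
\cong\colim_\lambda\Hom[\scrD\lRep]{\fre_i(G_i)}{M^{(\lambda)}},
\]
where the third isomorphism is the finite presentability of $G_i$ in $\scrD_i$ and the outer ones are the $(\fre_i,\eva^i)$-adjunction again, naturally in $\lambda$. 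This shows $\fre_i(G_i)\in\FPP(\scrD\lRep)$, so $\scrD\lRep$ has a set of finitely presented generators.

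For skeletal smallness of $\FPP(\scrD\lRep)$, I would argue that every finitely presented object $X$ is a direct summand (indeed a retract) of a finite coproduct $\coprod_{k=1}^n\fre_{i_k}(G_{i_k})$ of members of $\fre(\calG)$: writing $X$ as a filtered colimit of such finite coproducts — possible since $\fre(\calG)$ generates and $\scrD\lRep$ satisfies $\sf AB5$ — the identity on $X$ factors through one stage of the colimit because $\Hom[\scrD\lRep]{X}{-}$ preserves it, exhibiting $X$ as a retract. Since $\fre(\calG)$ is a set and the isomorphism classes of retracts of finite coproducts from a set form a set, $\FPP(\scrD\lRep)$ is skeletally small. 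Combined with the previous paragraph and \cite[Theorem~1.11]{AdamekRosicky}, this gives that $\scrD\lRep$ is locally finitely presented. The main obstacle I anticipate is the justification that $\eva^i$ commutes with filtered colimits — everything hinges on the componentwise description of colimits in $\scrD\lRep$ being valid under the present hypotheses (preservation of small coproducts by each $\scrD_\alpha$ plus $\sf AB5$ on each $\scrD_i$), which is precisely what Proposition~\ref{L AB 345} and the discussion preceding it supply; once that is in hand, the rest is the routine adjunction juggling above.
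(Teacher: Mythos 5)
Your proposal is correct and follows essentially the same route as the paper: reduce via Lemma~\ref{induce generator} to showing that each $\fre_i$ preserves finitely presented objects, then run the $(\fre_i,\eva^i)$ adjunction against the componentwise computation of filtered colimits in $\scrD\lRep$ and the finite presentability of $G_i$ in $\scrD_i$. The extra paragraph on skeletal smallness of $\FPP(\scrD\lRep)$ is a correct but unnecessary supplement, since the paper's criterion (\cite[Theorem~1.11]{AdamekRosicky}) already reduces local finite presentability to the existence of a set of finitely presented generators.
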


\begin{prf*}
By Lemma \ref{induce generator}, it is enough to show that $\fre_i$ preserves finitely presented object in $\scrD_i$ for $i \in\Ob(\calI)$. Let $N_i$ be a finitely presented object in $\scrD_i$ and $(M^x, f^{yx})$ be a filtered direct system of objects in $\scrD \lRep$. Then we have
\begin{align*}
\Hom[{\scrD \lRep}]{\fre_i(N_i)}{\colim \, M^x}
& \cong \Hom[\scrD_i]{N_i}{\eva^i(\colim \, M^x)}\\
& = \Hom[\scrD_i]{N_i}{\colim \, M^x_i}\\
& \cong \colim \Hom[\scrD_i]{N_i}{M^x_i}\\
& \cong \colim \Hom[{\scrD \lRep}]{\fre_i(N_i)}{M^x},
\end{align*}
where the first isomorphism holds by Corollary \ref{l adjoint of evalution and free functor}, the second equality holds since $\eva^i(\colim \, M^x) = (\colim \, M^x)_i$ and the filtered colimits in $\scrD \lRep$ are computed componentwise, the third isomorphism holds as $N_i$ is a finitely presented object in $\scrD_i$, and the last one holds by Corollary \ref{l adjoint of evalution and free functor} again.
\end{prf*}

The next result, following from \thmcite[1.4(2)]{WCB94} and Proposition \ref{local induce local}, gives the representation theorem for $\scrD \lRep$. Recall from \cite{Oberst70} that an object in the functor category ${\sf Fun}(\calC\op,\Ab)$ (where $\calC$ is a skeletally small additive category) is called \emph{flat} if it is a colimit of representable functors.

\begin{cor} \label{representation th}
Suppose that $\scrD_\alpha$ preserves small coproducts for $\alpha \in \Mor(\calI)$. If $\scrD_i$ is locally finitely presented for $i \in \Ob(\calI)$, then $\scrD \lRep$ is equivalent to the subcategory of flat objects in the functor category ${\sf Fun}(\FPP(\scrD \lRep)\op,\Ab)$.
\end{cor}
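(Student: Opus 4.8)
The plan is to deduce the statement formally from the locally finitely presented property established in Proposition~\ref{local induce local} together with Crawley-Boevey's general representation theorem.

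First I would check that $\scrD \lRep$ satisfies the hypotheses needed. By the standing Setup of this section $\scrD$ is right exact, so $\scrD \lRep$ is abelian by Proposition~\ref{Rep is abelian}. Since moreover $\scrD_\alpha$ preserves small coproducts for every $\alpha \in \Mor(\calI)$ and each $\scrD_i$ is locally finitely presented, Proposition~\ref{local induce local} applies and shows that $\scrD \lRep$ is itself locally finitely presented. Unwinding the definition recalled just before the statement, this means that $\FPP(\scrD \lRep)$ is skeletally small, that $\scrD \lRep$ is cocomplete (a locally finitely presented additive category has all small colimits: finite coproducts together with filtered colimits), and that every object of $\scrD \lRep$ is a filtered colimit of finitely presented objects.

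Next I would invoke \thmcite[1.4(2)]{WCB94}: for any locally finitely presented additive category $\calA$, with $\calC := \FPP(\calA)$, the restricted Yoneda functor
\[
\calA \longrightarrow {\sf Fun}(\calC\op,\Ab), \qquad A \longmapsto \Hom[\calA]{-}{A}\big|_{\calC},
\]
is fully faithful and identifies $\calA$ with the full subcategory of flat objects of ${\sf Fun}(\calC\op,\Ab)$. Applying this with $\calA = \scrD \lRep$, and hence $\calC = \FPP(\scrD \lRep)$, yields precisely the asserted equivalence.

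I do not expect a genuine obstacle at this point: all the substantive work — namely that the ``local'' locally finitely presented structures on the $\scrD_i$ amalgamate to one on $\scrD \lRep$ — is already carried out in Proposition~\ref{local induce local}, which itself rests on the adjunction $(\fre_i,\eva^i)$ of Corollary~\ref{l adjoint of evalution and free functor} and on the componentwise computation of filtered colimits in $\scrD \lRep$. The only remaining point to observe is that $\scrD \lRep$, being a locally finitely presented \emph{abelian} category, is in particular a locally finitely presented \emph{additive} category with small filtered colimits and skeletally small subcategory of finitely presented objects, so that Crawley-Boevey's theorem applies verbatim; this is immediate, and the corollary follows at once.
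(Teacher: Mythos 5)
Your proposal is correct and follows exactly the route the paper intends: the corollary is stated as an immediate consequence of Proposition~\ref{local induce local} (which gives that $\scrD \lRep$ is locally finitely presented) combined with Crawley-Boevey's representation theorem \thmcite[1.4(2)]{WCB94}. The additional verifications you spell out (skeletal smallness of $\FPP(\scrD \lRep)$, cocompleteness, applicability of the theorem to a locally finitely presented abelian category) are exactly the routine points the paper leaves implicit.
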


\subsection{The lift and cokernel functors} \label{Sec: The stalk functor and its adjunctions}

\noindent A nonempty subset $\calP$ of $\Mor(\calI)$ is called a \emph{two-sided ideal} of $\calI$ if for any $\alpha \in \Mor(\calI)$ and any morphism $\beta \in \mathcal{P}$, one has that $\alpha\beta$ or $\beta\alpha$, whenever composable, is always contained in $\mathcal{P}$. The two-sided ideal $\calP$ of $\calI$ is said to be \emph{prime} if the converse statement holds, that is, one has $\alpha \in \calP$ or $\beta \in \calP$ whenever $\alpha \beta \in \calP$.
From the above definition, it is easy to see that a two-sided ideal $\calP$ of $\Mor(\calI)$ is prime if and only if $\Mor(\calI) \backslash \calP$ is closed under compositions of morphisms.

Given a prime ideal $\calP$ of $\calI$, we can construct a subcategory $\calI/\calP$ of $\calI$ as follows:
\begin{itemize}
\item $i \in \Ob(\calI)$ is also an object in $\Ob(\calI/\calP)$ if $ e_i \notin \calP$.

\item $\alpha: i \to j  \in \Mor(\calI)$ is contained in $\Mor(\calI/\calP)$ if $\alpha \notin \calP$.\footnote{This is well defined since in this case $e_i$ and $e_j$ can not be in $\calP$ as $\calP$ is a two-sided ideal, and hence $i$ and $j$ are objects in $\Ob(\calI/\calP)$.}
\end{itemize}
Note that for any pair of composable morphisms $\alpha$ and $\beta$ in $\Mor(\calI)$ such that neither $\alpha$ nor $\beta$ is in $\calP$, their composite $\beta\alpha$ is not in $\calP$ since $\calP$ is prime by assumption.  Therefore, $\beta\alpha$ belongs to $\Mor(\calI/\calP)$, so  $\calI/\calP$ is indeed a subcategory. Furthermore, one has $\Mor(\calI/\calP) = \Mor(\calI) \backslash \calP$, the complement set of $\calP$ in $\Mor(\calI)$.

Let $\iota_\calP: \calI/\calP \hookrightarrow \calI$ be the inclusion functor. According to Lemma \ref{key detail for ext and res functors}, the $\calI$-diagram $\scrD$ of abelian categories induces an $\calI/\calP$-diagram $\scrD \circ \iota_\calP$ of abelian categories. We construct a \emph{lift functor} $\lif^{\calP}$ from $(\scrD \circ \iota_\calP) \lRep$ to $\scrD \lRep$, which roughly speaking, is obtained by adding zeroes. Explicitly, given an object $M$ and a morphism $\sigma: M \to M'$ in $(\scrD \circ \iota_\calP) \lRep$, define

\begin{itemize}
\item for $i \in \Ob(\calI)$,
\[
\lif^{\calP}(M)_i =
\begin{cases}
M_i, & \text{if } i \in \Ob(\calI/\calP),\\
0, & \text{otherwise};
\end{cases}
\]

\item for $\alpha: i \to j \in \Mor(\calI)$,
\[
\lif^{\calP}(M)_\alpha =
\begin{cases}
M_\alpha: \scrD_\alpha(M_i) \to M_j, & \text{if } \alpha \in \Mor(\calI/\calP),\\
0, & \text{otherwise};
\end{cases}
\]

\item for $i \in \Ob(\calI)$,
\[
\lif^{\calP}(\sigma)_i =
\begin{cases}
\sigma_i, & \text{if } i \in \Ob(\calI/\calP),\\
0,  & \text{otherwise.}
\end{cases}
\]
\end{itemize}

\begin{lem} \label{The lift functor}
Let $\calP$ be a prime ideal of $\calI$. Then $\lif^{\calP}: (\scrD \circ \iota_\calP) \lRep \to \scrD \lRep$ is an exact functor.
\end{lem}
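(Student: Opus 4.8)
The plan is to verify, in order, that $\lif^{\calP}$ is well defined on objects, that it is well defined on morphisms and respects identities and composites, and that it is exact; the first of these is where essentially all the (light) work lies, and the last will be immediate from Proposition \ref{Rep is abelian}.

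First I would check that $\lif^{\calP}(M)$ really is a representation over $\scrD$ whenever $M \in (\scrD \circ \iota_{\calP})\lRep$, by testing axioms (Rep.1) and (Rep.2) case by case according to membership in $\calP$. For (Rep.2) at an object $i$: if $i \in \Ob(\calI/\calP)$ then $e_i \notin \calP$, so $\lif^{\calP}(M)_{e_i} = M_{e_i} = \eta_i^{-1}(M_i)$, which is exactly what (Rep.2) demands, using that the coherence isomorphisms $\eta$ of $\scrD \circ \iota_{\calP}$ are the restrictions of those of $\scrD$ (Lemma \ref{key detail for ext and res functors}); if $i \notin \Ob(\calI/\calP)$ then $\lif^{\calP}(M)_i = 0 = \lif^{\calP}(M)_{e_i}$ and $\eta_i^{-1}(0) = 0$ since $\scrD_{e_i}$ is additive. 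For (Rep.1) over $i \overset{\alpha}{\to} j \overset{\beta}{\to} k$: if $\alpha \notin \calP$ and $\beta \notin \calP$, then $\beta\alpha \notin \calP$ because $\calP$ is prime (equivalently, $\Mor(\calI) \backslash \calP$ is closed under composition), and $i, j, k$ all lie in $\Ob(\calI/\calP)$, so the (Rep.1)-square for $\lif^{\calP}(M)$ is literally that for $M$ in $(\scrD \circ \iota_{\calP})\lRep$ and hence commutes; if at least one of $\alpha, \beta$ lies in $\calP$, then $\beta\alpha \in \calP$ because $\calP$ is a two-sided ideal, so $\lif^{\calP}(M)_{\beta\alpha} = 0$, while on the other branch either $\lif^{\calP}(M)_{\beta} = 0$ (when $\beta \in \calP$) or $\scrD_{\beta}(\lif^{\calP}(M)_{\alpha}) = \scrD_{\beta}(0) = 0$ (when $\alpha \in \calP$, using additivity of $\scrD_{\beta}$); either way both composites in (Rep.1) vanish.

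Next, functoriality: for $\sigma \colon M \to M'$ in $(\scrD \circ \iota_{\calP})\lRep$, the naturality square of $\lif^{\calP}(\sigma)$ attached to a morphism $\alpha$ of $\calI$ is the corresponding square for $\sigma$ when $\alpha \notin \calP$, and has both vertical arrows equal to $0$ when $\alpha \in \calP$, so it commutes in all cases; since every piece of data is defined componentwise, preservation of identities and of composition is clear. This produces a well-defined additive functor $\lif^{\calP}\colon (\scrD \circ \iota_{\calP})\lRep \to \scrD \lRep$.

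Finally, exactness: by Lemma \ref{key detail for ext and res functors} the diagram $\scrD \circ \iota_{\calP}$ is right exact, so Proposition \ref{Rep is abelian} applies to $(\scrD \circ \iota_{\calP})\lRep$ as well as to $\scrD \lRep$ and says a three-term sequence is exact exactly when it is exact at every object. Given an exact sequence $M \to N \to K$ in $(\scrD \circ \iota_{\calP})\lRep$, the sequence $M_i \to N_i \to K_i$ is exact in $\scrD_i$ for each $i \in \Ob(\calI/\calP)$; hence $\lif^{\calP}(M)_i \to \lif^{\calP}(N)_i \to \lif^{\calP}(K)_i$ is exact in $\scrD_i$ for every $i \in \Ob(\calI)$ — it is the given sequence when $i \in \Ob(\calI/\calP)$, and $0 \to 0 \to 0$ otherwise — so $\lif^{\calP}(M) \to \lif^{\calP}(N) \to \lif^{\calP}(K)$ is exact in $\scrD \lRep$, i.e.\ $\lif^{\calP}$ is exact. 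The hard part, such as it is, will be the bookkeeping in the (Rep.1) verification: it is the one place where both defining properties of a prime ideal, two-sidedness and primeness, get used, in complementary cases; everything else is formal.
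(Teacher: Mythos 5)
Your proposal is correct and follows essentially the same route as the paper: a case analysis on membership in $\calP$ for (Rep.1), (Rep.2) and the naturality squares, using two-sidedness to force $\beta\alpha\in\calP$ and primeness to keep the complement closed under composition. The only differences are cosmetic — you merge the paper's four (Rep.1) cases into two, and you explicitly spell out the componentwise exactness argument via Proposition \ref{Rep is abelian}, which the paper leaves implicit.
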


\begin{prf*}
We show that $\lif^{\calP}$ sends objects to objects and morphisms to morphisms. Other axioms of functors can be verified routinely. To show that $\lif^{\calP}(M)$ is an object in $\scrD \lRep$, we have to check that $\lif^{\calP}(M)$ satisfies  the axioms (Rep.1) and (Rep.2) in Definition \ref{DF OF R-M}.

For the axiom (Rep.1), we want to prove the equality
\[
\lif^{\calP}(M)_{\beta\alpha} \circ \tau_{\beta,\alpha} (\lif^{\calP}(M)_i) = \lif^{\calP}(M)_\beta \circ \scrD_\beta(\lif^{\calP}(M)_\alpha)
\]
for any pair $i \overset{\alpha} \to j \overset{\beta} \to k$ of composable morphisms in $\Mor(\calI)$. We have four cases.
\begin{prt}
\item If $\alpha \in \Mor(\calI/\calP)$ and $\beta \in \Mor(\calI/\calP)$, then $\alpha \notin \calP$ and $\beta \notin\calP$, so $\beta \alpha \in \Mor(\calI/\calP)$ since $\Mor(\calI/\calP) = \Mor(\calI) \backslash \calP$ is closed under composition of morphisms. In this case the desired equality holds as $M$ satisfies the axioms (Rep.1).

\item If $\alpha \in \Mor(\calI/\calP)$ but $\beta \notin \Mor(\calI/\calP)$, then $\beta \in \calP$ and $\lif^{\calP}(M)_\beta = 0$, so $\lif^{\calP}(M)_\beta \circ \scrD_\beta(\lif^{\calP}(M)_\alpha) = 0$. On the other hand, since $\calP$ is an ideal, $\beta\alpha \in \calP$. Hence $\beta\alpha \notin \Mor(\calI/\calP)$,
    which implies that $\lif^{\calP}(M)_{\beta\alpha} = 0$. Consequently,
    $\lif^{\calP} (M)_{\beta\alpha} \circ \tau_{\beta,\alpha}(\lif^{\calP}(M)_i) = 0$,
    as desired.

\item If $\alpha \notin \Mor(\calI/\calP)$ but $\beta \in \Mor(\calI/\calP)$, then one can verify the equality as in the case (b).

\item If $\alpha \notin \Mor(\calI/\calP)$ and $\beta \notin \Mor(\calI/\calP)$, then $\lif^{\calP}(M)_\alpha = 0$ and $\lif^{\calP}(M)_\beta = 0$, so
    $$\lif^{\calP} (M)_\beta \circ \scrD_\beta(\lif^{\calP} (M)_\alpha) = 0.$$
    On the other hand, note that $\beta\alpha \notin \Mor(\calI/\calP)$ because $\calP$ is closed under composition. It follows that $\lif^{\calP}(M)_{\beta\alpha} = 0$, which implies the desired equality.
\end{prt}

For the axiom (Rep.2), we have to prove the equality
\[
  \lif^{\calP}(M)_{e_i} \circ \eta_i(\lif^{\calP}(M)_i) = \id_{\lif^{\calP}(M)_i}
\]
for any $i \in \Ob(\calI)$. Indeed, if $i \in \Ob(\calI/\calP)$, then $e_i \in \Mor(\calI/\calP)$, so we have
\[
\lif^{\calP}(M)_{e_i} \circ \eta_i(\lif^{\calP}(M)_i) = M_{e_i} \circ \eta_i(M_i) = \id_{M_i} = \id_{\lif^{\calP}(M)_i},
\]
where the second equality holds as $M$ satisfies the axiom (Rep.2). In the case where $i \notin \Ob(\calI/\calP)$, we have $e_i \notin \Mor(\calI/\calP)$ and $\lif^{\calP}(M)_i = 0$, so
\[
\lif^{\calP}(M)_{e_i} \circ \eta_i(\lif^{\calP}(M)_i) = 0 = \id_{\lif^{\calP}(M)_i}.
\]
Thus, $\lif^{\calP}(M)_\alpha$ satisfies the axiom (Rep.2) as well.

To verify that $\lif^{\calP}(\sigma)$ is a morphism in $\scrD \lRep$, we have to show
\[
\lif^{\calP}(\sigma)_j \circ \lif^{\calP}(M)_{\alpha} = \lif^{\calP}(M')_{\alpha} \circ \scrD_\alpha (\lif^{\calP}(\sigma)_i)
\]
for all $\alpha: i \to j \in \Mor(\calI)$. Indeed, if $\alpha \in \Mor(\calI/\calP)$, then both $i$ and $j$ are in $\Ob(\calI/\calP)$. Hence
$\lif^{\calP}(\sigma)_j \circ \lif^{\calP}(M)_{\alpha} = \sigma_j \circ M_\alpha = M'_\alpha \circ \scrD_\alpha(\sigma_i) = \lif^{\calP}(M')_{\alpha} \circ \scrD_\alpha( \lif^{\calP}(\sigma)_i)$.
If $\alpha \notin \Mor(\calI/\calP)$, then both $\lif^{\calP}(M)_{\alpha}$ and $\lif^{\calP}(M')_{\alpha}$ are zero. The desired equality holds clearly in this case.
\end{prf*}


Let $\calP$ be a prime ideal of $\calI$.
In what follows, for any $i \in \Ob(\calI)$, denote
\[
\calP(\bullet, i) = \{\theta \in \Mor(\calI)\, \mid \, \theta \in \calP \text{ with target } i\}.
\]
Suppose that $\scrD_i$ satisfies the axiom $\sf{AB3}$ for $i \in \Ob(\calI/\calP)$ and $\scrD_{\alpha}$ preserves small coproducts for $\alpha \in \Mor(\calI/\calP)$. For any $M \in \scrD \lRep$, by the universal property of colimits, there exists a unique morphism ${\varphi}_i^M$ such that for every $\theta \in \calP(\bullet, i)$, the diagram
\begin{equation} \label{map-def}
\xymatrix{
\scrD_\theta(M_{s(\theta)}) \ar[d] \ar[drr]^{M_\theta}\\
\colim_{\theta \in \calP(\bullet, i)} \scrD_\theta(M_{s(\theta)}) \ar[rr]^{\ \ \ \ \ \ {\varphi}_i^M} && M_i
}
\end{equation}
in $\scrD_i$ commutes.
We use the above commutative diagram to define the left adjoint
\[
\C_\calP: \scrD\lRep \to (\scrD \circ \iota_\calP)\lRep
\]
of the lift functor. Explicitly, given an object $M$ and a morphism $\omega = \{\omega_i\}_{i \in \Ob(\calI)}: M \to M'$ in $\scrD \lRep$,
\begin{itemize}
\item for $i \in \Ob(\calI/\calP)$, set $\C_\calP(M)_i$ to be $\coker({\varphi}_i^M)$. Explicitly,
\[
\C_\calP(M)_i = M_i/\sum_{\theta \in \calP(\bullet, i)} M_\theta(\scrD_\theta(M_{s(\theta)})).
\]
We mention that the sum appearing in the above equality is precisely the image of $\varphi_i^M$.

\item For $\alpha: i \to j \in \Mor(\calI/\calP)$, one can obtain a composite of morphisms
\[\quad\quad\quad
M_{\alpha} \big{(} \scrD_{\alpha} (\sum_{\theta \in \calP(\bullet, i)} M_\theta(\scrD_\theta(M_{s(\theta)}))) \big{)} \rightarrow \sum_{\theta \in \calP(\bullet, i)} M_{\alpha \theta} (\scrD_{\alpha \theta} (M_{s(\theta)})) \rightarrow
\sum_{\delta \in \calP(\bullet, j)} M_{\delta} (\scrD_{\delta} (M_{s(\delta)}))
\]
where the first one is obtained by applying a suitable natural transformation in (Rep.1) and the second one is an actual inclusion. This composite of morphisms gives rise to a morphism $\varrho_{\alpha}$ making the following diagram
\begin{equation} \label{third C}
\xymatrix{
\scrD_\alpha(\colim_{\theta \in \calP(\bullet, i)} \scrD_\theta(M_{s(\theta)})) \ar[r]^-{\scrD_\alpha(\varphi_i^M)} & \scrD_\alpha (M_i) \ar[r]^-{\scrD_\alpha(\pi_i^M)} \ar[d]^{M_\alpha} & \scrD_\alpha(\C_\calP(M)_i) \ar[r] \ar[d]^{\varrho_\alpha} & 0\\
\colim_{\delta \in \calP(\bullet, j)} \scrD_\delta(M_{s(\delta)}) \ar[r]^-{\varphi_j^M} & M_j \ar[r]^-{\pi_j^M} & \C_\calP(M)_j \ar[r] & 0
}
\end{equation}
commutes. We then define $\C_\calP(M)_\alpha$ to be $\varrho_\alpha$.
\item For $i \in \Ob(\calI/\calP)$, one has
\begin{align*}
\pi_i^{M'} \circ \omega_i \circ \varphi_i^M \circ s_\theta^{M} &=
\pi_i^{M'} \circ \omega_i \circ M_\theta \\
&= \pi_i^{M'} \circ M'_\theta \circ \scrD_\theta(\omega_{s(\theta)})\\
&= \pi_i^{M'} \circ \varphi_i^{M'} \circ s_\theta^{M'} \circ \scrD_\theta(\omega_{s(\theta)})
= 0,
\end{align*}
where the first and third equalities follow from (\ref{map-def}).
By the universal property of colimts, $\pi_i^{M'} \circ \omega_i \circ \varphi_i^M = 0$. Therefore, by the universal property of cokernels, we can find a unique morphism $\C_\calP(\omega)_i$ such that the diagram
\begin{equation} \label{FOUR C}
\xymatrix{
\colim_{\theta \in \calP(\bullet, i)} \scrD_\theta(M_{s(\theta)}) \ar[r]^-{\varphi_i^M} & M_i \ar[r]^-{\pi_i^M} \ar[d]^{\omega_i} & \C_\calP(M)_i \ar[r] \ar[d]^{\C_\calP(\omega)_i} & 0\\
\colim_{\theta \in \calP(\bullet, i)} \scrD_\theta(M'_{s(\theta)}) \ar[r]^-{\varphi_i^{M'}} & M'_i \ar[r]^-{\pi_i^{M'}} & \C_\calP(M')_i \ar[r] & 0
}
\end{equation}
commutes.
\end{itemize}

\begin{thm} \label{coker adjoint pair}
Let $\calP$ be a prime ideal of $\calI$.
Suppose that $\scrD_i$ satisfies the axiom $\sf{AB3}$ for $i \in \Ob(\calI/\calP)$ and $\scrD_\alpha$ preserves small coproducts for $\alpha \in \Mor(\calI/\calP)$.
Then $\C_\calP: \scrD\lRep \to (\scrD \circ \iota_\calP)\lRep$ defined above is a functor, and is the left adjoint of $\lif^{\calP}$.
\end{thm}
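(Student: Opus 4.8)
The plan is to verify in turn that $\C_\calP$ is a well-defined functor and that it is left adjoint to $\lif^{\calP}$; the latter will be done by exhibiting, for $M \in \scrD \lRep$ and $N \in (\scrD \circ \iota_\calP)\lRep$, a bijection
\[
\Hom[{(\scrD \circ \iota_\calP)\lRep}]{\C_\calP(M)}{N} \;\cong\; \Hom[{\scrD \lRep}]{M}{\lif^{\calP}(N)}
\]
natural in both variables, using throughout only the universal properties of the colimits in (\ref{map-def}) and of the cokernel maps $\pi_i^M$. The hypotheses enter as follows: the axiom $\sf{AB3}$ on $\scrD_i$ for $i \in \Ob(\calI/\calP)$ guarantees the colimits $\colim_{\theta \in \calP(\bullet,i)} \scrD_\theta(M_{s(\theta)})$ exist, and preservation of small coproducts by $\scrD_\alpha$ for $\alpha \in \Mor(\calI/\calP)$ together with right-exactness shows each such $\scrD_\alpha$ preserves all small colimits (every colimit in an $\sf{AB3}$ abelian category being a cokernel of a morphism of coproducts); this is what makes the left-hand vertical maps of (\ref{third C}) well-defined, while right-exactness alone makes the top row of (\ref{third C}) exact, so that $\scrD_\alpha(\pi_i^M)$ is the cokernel of $\scrD_\alpha(\varphi_i^M)$, in particular an epimorphism. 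Finally, the fact that $\calP$ is a two-sided ideal --- so that $\alpha\theta \in \calP(\bullet,j)$ whenever $\theta \in \calP(\bullet,i)$ and $\alpha\colon i \to j$ --- is exactly what allows the structural maps $\varrho_\alpha$ to be defined.

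First I would confirm that $\C_\calP(M)$ is a genuine object of $(\scrD \circ \iota_\calP)\lRep$, i.e.\ satisfies (Rep.1) and (Rep.2). For (Rep.2), apply the defining square (\ref{third C}) to $\alpha = e_i$; since $\scrD_{e_i}(\pi_i^M)$ is an epimorphism, the identity $\C_\calP(M)_{e_i} = \eta_i^{-1}(\C_\calP(M)_i)$ can be checked after precomposing with it, whereupon it reduces to (Rep.2) for $M$ and naturality of $\eta$. For (Rep.1), given composable $i \overset{\alpha}{\to} j \overset{\beta}{\to} k$ in $\Mor(\calI/\calP)$, both sides of the required identity are morphisms $\scrD_{\beta\alpha}(\C_\calP(M)_i) \to \C_\calP(M)_k$; precomposing with the epimorphism $\scrD_{\beta\alpha}(\pi_i^M)$ and with $\tau_{\beta,\alpha}(M_i)$, the identity reduces --- through the squares (\ref{third C}) for $\alpha$, $\beta$ and $\beta\alpha$, the coherence axiom (Dia.1), and (Rep.1) for $M$ --- to something already known. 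Functoriality of $\C_\calP$ (preservation of identities and composites, and that $\C_\calP(\omega)$ is a morphism) follows at once from the uniqueness clauses in (\ref{third C}) and (\ref{FOUR C}).

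The core of the proof is the adjunction bijection, and its key step is a concrete description of $\Hom[{\scrD \lRep}]{M}{\lif^{\calP}(N)}$. Since $\lif^{\calP}(N)_i = 0$ for $i \notin \Ob(\calI/\calP)$, a morphism $\omega\colon M \to \lif^{\calP}(N)$ amounts exactly to a family $\{\omega_j\colon M_j \to N_j\}_{j \in \Ob(\calI/\calP)}$; and since $\lif^{\calP}(N)_\alpha = N_\alpha$ for $\alpha \in \Mor(\calI/\calP)$ but $\lif^{\calP}(N)_\alpha = 0$ for $\alpha \in \calP$, the morphism axiom for $\omega$ unwinds into precisely two conditions: (a) $\omega_j \circ M_\alpha = N_\alpha \circ \scrD_\alpha(\omega_i)$ for every $\alpha\colon i \to j$ in $\Mor(\calI/\calP)$, and (b) $\omega_j \circ M_\theta = 0$ for every $j \in \Ob(\calI/\calP)$ and every $\theta \in \calP(\bullet,j)$ --- here one uses that if an endpoint of $\alpha$ is not in $\Ob(\calI/\calP)$ then $\alpha \in \calP$, since $\calP$ is a two-sided ideal, so for such $\alpha$ the axiom is either an instance of (b) or holds trivially because $\omega_{t(\alpha)} = 0$. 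By the universal property of the colimit in (\ref{map-def}), condition (b) for fixed $j$ says $\omega_j \circ \varphi_j^M = 0$, which by the universal property of the cokernel $\pi_j^M$ is equivalent to a unique factorization $\omega_j = \psi_j \circ \pi_j^M$ with $\psi_j\colon \C_\calP(M)_j \to N_j$. Finally, using the commuting square (\ref{third C}) and that $\scrD_\alpha(\pi_i^M)$ is an epimorphism, condition (a) translates verbatim into $\psi_j \circ \varrho_\alpha = N_\alpha \circ \scrD_\alpha(\psi_i)$, i.e.\ into $\psi = \{\psi_j\}_{j \in \Ob(\calI/\calP)}$ being a morphism $\C_\calP(M) \to N$ in $(\scrD \circ \iota_\calP)\lRep$. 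This gives the bijection $\psi \leftrightarrow \omega$, and its naturality in $N$ and in $M$ is a routine check from the formula $\omega_j = \psi_j \circ \pi_j^M$ and the constructions of $\lif^{\calP}$ and $\C_\calP$ on morphisms.

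I expect the main obstacle to be the direct check in the second paragraph that $\C_\calP(M)$ satisfies (Rep.1): one must stack three copies of the defining square (\ref{third C}), invoke the coherence axiom (Dia.1) for $\tau$ and (Rep.1) for $M$, and use right-exactness of $\scrD_{\beta\alpha}$ to cancel the epimorphism $\scrD_{\beta\alpha}(\pi_i^M)$ at the end. The case analysis underlying the description of $\Hom[{\scrD \lRep}]{M}{\lif^{\calP}(N)}$ in the third paragraph is the other spot that needs attention, though it is conceptually transparent once one records that $\calP$ being an ideal forces $\alpha \in \calP$ as soon as either endpoint of $\alpha$ is missing from $\calI/\calP$.
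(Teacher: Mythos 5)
Your proposal is correct and follows essentially the same route as the paper: functoriality of $\C_\calP$ is checked by cancelling the epimorphisms $\scrD_\alpha(\pi_i^M)$ against the defining squares (\ref{third C}) and (\ref{FOUR C}), and the adjunction comes from the universal property of the cokernel $\pi_j^M$ applied to morphisms that kill the image of $\varphi_j^M$. The only difference is presentational --- the paper writes down the two maps $u$ and $v$ explicitly and checks they are mutually inverse, whereas you characterize $\Hom[{\scrD\lRep}]{M}{\lif^{\calP}(N)}$ by the two conditions (a) and (b) and then match them against morphisms out of $\C_\calP(M)$; the underlying argument, including the case analysis using that $\calP$ is a two-sided ideal, is the same.
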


\begin{prf*}
To establish the first statement, it is enough to show that $\C_\calP$ sends objects to objects and morphisms to morphisms, and other axioms of functors are clear since the above construction of $\C_{\calP}$ is functorial.

\begin{rqm}
\item  $\C_\calP(M)$ is an object in $(\scrD \circ \iota_\calP) \lRep$. We need to verify the axioms (Rep.1) and (Rep.2) in Definition \ref{DF OF R-M}. For any pair $i \overset{\alpha} \to j \overset{\beta} \to k$ of morphisms in $\Mor(\calI/\calP)$, we have equalities
\begin{align*}
&\C_\calP(M)_\beta \circ \scrD_\beta(\C_\calP(M)_\alpha) \circ \scrD_\beta(\scrD_\alpha(\pi_i^M))\\
&= \C_\calP(M)_\beta \circ \scrD_\beta(\pi_j^M) \circ \scrD_\beta(M_\alpha) & \text{by (\ref{third C})}\\
&= \pi_k^M \circ M_\beta \circ \scrD_\beta(M_\alpha) & \text{by (\ref{third C})}\\
&= \pi_k^M \circ M_{\beta\alpha} \circ \tau_{\beta,\alpha}(M_i) & \text{by (Rep.1)}\\
&= \C_\calP(M)_{\beta\alpha} \circ \scrD_{\beta\alpha}(\pi_i^M) \circ \tau_{\beta,\alpha}(M_i) & \text{by (\ref{third C})}\\
&= \C_\calP(M)_{\beta\alpha} \circ \tau_{\beta,\alpha}(\C_\calP(M)_i) \circ \scrD_\beta(\scrD_\alpha(\pi_i^M)),
\end{align*}
where the last equality follows from the commutative diagram obtained by applying  $\tau_{\beta,\alpha}$ to $\pi_i^M$. Since both $\scrD_\beta$ and $\scrD_\alpha$ are right exact and $\pi_i^M$ is an epimorphism, we conclude that $\scrD_\beta(\scrD_\alpha(\pi_i^M))$ is also an epimorphism. Thus we have
\[
\C_\calP(M)_\beta \circ \scrD_\beta(\C_\calP(M)_\alpha)=\C_\calP(M)_{\beta\alpha} \circ \tau_{\beta,\alpha} (\C_\calP(M)_i).
\]
Hence $\C_\calP(M)$ satisfies the axiom (Rep.1).

For $i \in \Ob(\calI/\calP)$, we have equalities
\begin{align*}
\C_\calP(M)_{e_i} \circ \eta_i(\C_\calP(M)_i) \circ \pi_i^M
&= \C_\calP(M)_{e_i} \circ \scrD_{e_i}(\pi_i^M) \circ \eta_i(M_{e_i})\\
&= \pi_i^M \circ M_{e_i} \circ \eta_i(M_{e_i}) & \text{by (\ref{third C})}\\
&= \pi_i^M,
\end{align*}
where the first equality holds by the commutative diagram obtained by applying $\eta_i$ to $\pi_i^M$. Since $\pi_i^M$ is an epimorphism, one has
\[
\C_\calP(M)_{e_i} \circ \eta_i(\C_\calP(M)_i) = \id_{\C_\calP(M)_i},
\]
which is exactly the axiom (Rep.2).
\item $\C_\calP(\omega) = \{\C_\calP(\omega)_i\}_{i \in \Ob(\calI/\calP)}$ is a morphism. We must show the equality
\[
\C_\calP(\omega)_j \circ \C_\calP(M)_\alpha = \C_\calP(M')_\alpha \circ \scrD_\alpha(\C_\calP(\omega)_i)
\]
for any $\alpha: i \to j \in \Mor(\calI/\calP)$.  But we have
\begin{align*}
 \C_\calP(\omega)_j \circ \C_\calP(M)_\alpha \circ \scrD_\alpha(\pi_i^M)
&= \C_\calP(\omega)_j \circ \pi_j^M \circ M_\alpha & \text{by (\ref{third C})}\\
&= \pi_j^{M'} \circ \omega_j \circ M_\alpha & \text{by (\ref{FOUR C})}\\
&= \pi_j^{M'} \circ M'_\alpha \circ \scrD_\alpha(\omega_i)\\
&= \C_\calP(M')_\alpha \circ \scrD_\alpha(\pi_i^{M'} ) \circ \scrD_\alpha(\omega_i) & \text{by (\ref{third C})}\\
&= \C_\calP(M')_\alpha \circ \scrD_\alpha(\C_\calP(\omega)_i) \circ \scrD_\alpha(\pi_i^M), & \text{by (\ref{FOUR C})}
\end{align*}
where the third equality holds as $\omega$ is a morphism in $\scrD \lRep$. Since $\scrD_\alpha$ is right exact and $\pi_i^M$ is an epimorphism, it follows that $\scrD_\alpha (\pi_i^M)$ is also an epimorphism, and the desired equality follows.

We have established the first statement. Now we prove the second one. Let $M$ be an object in $\scrD \lRep$ and $N$ an object in $(\scrD \circ \iota_\calP) \lRep$. We construct a pair of natural maps
\[
u: \Hom[{(\scrD \circ \iota_\calP)\lRep}]{\C_\calP(M)}{N} \rightleftarrows \Hom[\scrD \lRep]{M}{\lif^{\calP}(N)} :v
\]
which are inverse to each other. Note that there exists an exact sequence
\[
\colim_{\theta \in \calP(\bullet, i)} \scrD_\theta(M_{s(\theta)}) \overset{\varphi_i^M} \longrightarrow  M_i  \overset{\pi^M_i} \longrightarrow \C_\calP(M)_i \to 0
\]
in $\scrD_i$ for $i \in \Ob(\calI/\calP)$.
\item Define the map $u$. Let $\sigma = \{\sigma_i\}_{i \in \Ob(\calI/\calP)}: \C_\calP(M) \to N$ be a morphism in $(\scrD \circ \iota_\calP) \lRep$. Then for $i \in\Ob(\calI)$, define
\begin{equation*}
u(\sigma)_i =
\begin{cases}
\sigma_i \circ \pi^M_i, & \text{if } i \in \Ob(\calI/\calP),\\
0, & \text{otherwise}
\end{cases}
\end{equation*}
by noting that $\lif^{\calP} (N)_i = N_i$ if $i \in \Ob(\calI/\calP)$ and $N_i = 0$ otherwise.

To show that $u(\sigma) = \{u(\sigma)_i\}_{i \in \Ob(\calI)}$ is a morphism from $M$ to $\lif^{\calP}(N)$, we verify the equality
\[
u(\sigma)_j \circ M_\alpha = \lif^{\calP}(N)_\alpha \circ \scrD_\alpha(u(\sigma)_i) \quad \quad \quad \quad (\sharp)
\]
for any $\alpha: i \to j \in \Mor(\calI)$ case by case.

\begin{prt}
\item If $j \notin \Ob(\calI/\calP)$, then $\lif^{\calP}(N)_j = 0$, and the equality $(\sharp)$ holds.

\item If $j \in \Ob(\calI/\calP)$ but $i \notin \Ob(\calI/\calP)$, then $e_i \in \calP$, so $\alpha \in \calP(\bullet, j)$. Consequently, $M_\alpha = {\varphi}_j^M \circ s_\alpha^{M}$, and one has
\[
u(\sigma)_j \circ M_\alpha = \sigma_j \circ \pi^M_j \circ M_\alpha = \sigma_j \circ \pi^M_j \circ \varphi_j^M \circ s_\alpha^{M} = 0.
\]
On the other hand, since $u(\sigma)_i = 0$ by definition, we have $\lif^{\calP}(N)_\alpha \circ \scrD_\alpha(u(\sigma)_i) = 0$.
Thus the equality $(\sharp)$ holds, too.

\item If $j \in \Ob(\calI/\calP)$, $i \in \Ob(\calI/\calP)$ but $\alpha \notin \Mor(\calI/\calP)$, then one can check the equality by an argument similar to that of the previous case.

\item If $j \in \Ob(\calI/\calP)$, $i \in \Ob(\calI/\calP)$ and $\alpha \in \Mor(\calI/\calP)$, then one has
\begin{align*}
 \lif^{\calP}(N)_\alpha \circ \scrD_\alpha( u(\sigma)_i)
&= N_\alpha \circ \scrD_\alpha(\sigma_i) \circ \scrD_\alpha(\pi^M_i)\\
&= \sigma_j \circ \C_\calP(M)_\alpha \circ \scrD_\alpha(\pi^M_i)\\
&= \sigma_j \circ \pi^M_j \circ M_\alpha\\
&= u(\sigma)_j \circ M_\alpha,
\end{align*} where the first equality holds by the definitions of $\lif^{\calP}(N)_\alpha$ and $u(\sigma)_i$.
\end{prt}

\item Define the map $v$. Let $\omega = \{\omega_i\}_{i \in \Ob(\calI)}: M \to \lif^{\calP}(N)$ be a morphism in $\scrD \lRep$. Then for $i \in \Ob(\calI/\calP)$ and
$\theta \in \calP(\bullet, i)$, one has
\[
\omega_i \circ \varphi_i^M \circ s_\theta^{M} = \omega_i \circ M_\theta = \lif^{\calP}(N)_\theta \circ \scrD_\theta(\omega_{s(\theta)}) = 0,
\]
where the first equality holds by (\ref{map-def}), and the last equality holds since $\theta \notin \Mor(\calI/\calP)$. Therefore, $\omega_i \circ {\varphi}_i^M = 0$ by the universal property of colimts, and hence we can find a unique morphism $v(\omega)_i$ such that the diagram
\begin{equation*} \label{adjoint 1}
\tag{\ref{coker adjoint pair}.1}
\xymatrix{
\colim_{\theta \in \calP(\bullet, i)} \scrD_\theta(M_{s(\theta)})\ar[r]^-{\varphi_i^M} & M_i \ar[r]^-{\pi^M_i} \ar[d]_{\omega_i} & \C_\calP(M)_i \ar[dl]^{v(\omega)_i} \ar[r] & 0\\
& \lif^{\calP}(N) = N_i
}
\end{equation*}
commutes.

We show that $v(\omega) = \{v(\omega)_i\}_{i \in \Ob(\calI/\calP)}$ is a morphism from $\C_\calP(M)$ to $N$ by checking the equality
\[
v(\omega)_j \circ \C_\calP(M)_\alpha = N_\alpha \circ \scrD_\alpha(v(\omega)_i)
\]
for any $\alpha: i \to j \in \Mor(\calI/\calP)$. Indeed, one has
\begin{align*}
v(\omega)_j \circ \C_\calP(M)_\alpha \circ \scrD_\alpha(\pi^M_i)
&= v(\omega)_j \circ \pi^M_j \circ M_\alpha & \text{by (\ref{third C})}\\
&= \omega_j \circ M_\alpha & \text{by (\ref{adjoint 1})}\\
&= \lif^{\calP}(N)_\alpha \circ \scrD_\alpha(\omega_i)\\
&= N_\alpha \circ  \scrD_\alpha(v(\omega)_i) \circ \scrD_\alpha(\pi^M_i) &  \text{by (\ref{adjoint 1}).}
\end{align*}
Since $\scrD_\alpha(\pi^M_i)$ is an epimorphism, the desired equality follows.
\item $u$ and $v$ are inverse to each other. Clearly, $u$ and $v$ are natural with respect to $M$ and $N$. Let $i$ be an object in $\Ob(\calI)$. If $i \notin \Ob(\calI/\calP)$, then $u(v(\omega))_i = 0$ by definition. Note that $\lif^{\calP}(N)_i = 0$ in this case, so $\omega_i = 0$. Therefore, $u(v(\omega))_i = \omega_i$. If $i \in \Ob(\calI/\calP)$, then $u(v(\omega))_i = v(\omega)_i \circ \pi^M_i = \omega_i$, where the the second equality holds by (\ref{adjoint 1}). Consequently, we always have $uv(\omega)=\omega$. On the other hand, suppose that $i$ is an object in $\Ob(\calI/\calP)$. Then one has $v(u(\sigma))_i \circ \pi^M_i = u(\sigma)_i = \sigma_i \circ \pi^M_i$, where the first equality holds by (\ref{adjoint 1}). By the universal property of cokernels, we conclude that $v(u(\sigma))_i = \sigma_i$, and hence $vu(\sigma) = \sigma$.
\end{rqm}
This finishes the proof.
\end{prf*}

In the following, we apply the general results established in the above to a special type of index categories $\calI$: partially ordered categories. The relation $\preccurlyeq$ on the set $\Ob(\calI)$ such that $i \preccurlyeq j$ if $\Hom[\calI]i{j} \neq \emptyset$ is clearly reflexive and transitive, but in general not anti-symmetric. If furthermore it is anti-symmetric, then $\Ob(\calI)$ becomes a poset with respect to the partial order $\preccurlyeq$. Categories satisfying this condition are called \textit{partially ordered categories} (or \textit{weakly directed categories}, \textit{directed categories} in the literature by some authors), which is a natural generalization of posets.

Suppose that $\calI$ is a partially ordered category. Then $\calP_i = \Mor(\calI) \backslash \AU i$ is a prime ideal for $i \in \Ob(\calI)$. It is clear that $\calI/\calP_i$ is the full subcategory of $\calI$ with one object $i$. Therefore, we have canonical inclusion functors $\iota_i: \calI_i \to \calI$, $\jmath_i: \calI_i \to \calI/\calP_i$ and $\iota_{\calP_i}: \calI/\calP_i \to \calI$, where $\calI_i$ is the subcategory of $\calI$ consisting of the single object $i$ and the single identity $e_i$. Clearly, $\iota_i = \iota_{\calP_i} \circ \jmath_i$. If $\scrD_i$ satisfies the axiom $\sf{AB3}$ and $\scrD_\gamma$ preserves small coproducts for $\gamma \in \AU i$, then we have the following functors
\[
\xymatrix{
\scrD \lRep \ar[r]^-{\C_{\calP_i}} & (\scrD \circ \iota_{\calP_i}) \lRep \ar[r]^-{\jmath_i^{\ast}} & (\scrD \circ \iota_i) \lRep \cong  \scrD_i.
}
\]
Here $\jmath_i^{\ast}$ is the restriction functor with respect to $\jmath_i$;
see Lemma \ref{l restriction functor along G}.
Denote by $\C_i$ their composite.
Explicitly, given $M \in \scrD \lRep$, one has
\[
\C_i(M) = M_i / \sum_{\theta \in \calP_i(\bullet,i)} M_{\theta} (\scrD_{\theta} (M_{s(\theta)})).
\]

Next we give the right adjoint of $\C_i$. The right adjoint of $\C_{\calP_i}$ is the functor
\[
\lif^{\calP_i}: (\scrD \circ \iota_{\calP_i}) \lRep \to \scrD \lRep;
\]
see Theorem \ref{coker adjoint pair}.
If $\scrD_i$ satisfies the axiom $\sf{AB3}^*$, then by right Kan extension, we can construct the right adjoint
\[
\ran_i: (\scrD \circ \iota_i) \lRep \to (\scrD \circ \iota_{\calP_i}) \lRep,
\]
of $\jmath_i^{\ast}$ which sends an object $K$ in $\scrD_i \cong (\scrD \circ \iota_i) \lRep $ to the object
\[
\ran_i(K) = \prod_{\gamma \in \AU i} \scrD_\gamma(K)
\]
in $(\scrD \circ \iota_{\calP_i}) \lRep$. Define the \emph{stalk functor} at $i$ to be the composite
\[
\xymatrix{
\sta^i:  \scrD_i \cong (\scrD \circ \iota_i) \lRep \ar[r]^-{\ran_i} & (\scrD \circ \iota_{\calP_i}) \lRep \ar[r]^-{\lif^{\calP_i}} & \scrD \lRep.
}
\]
Explicitly, given an object $M_i \in \scrD_i$, for $j \in \Ob(\calI)$, one has
\[
(\sta^i(M_i))_j =
\begin{cases}
0, & \text{if } i \neq j;\\
\prod_{\gamma \in \AU i} \scrD_{\gamma} (M_i), & \text{otherwise}.
\end{cases}
\]

\begin{rmk}
We have defined two cokernel functors $\C_i$ and $\C_{\calP_i}$. For an object $M$ in $\scrD \lRep$, $\C_{\calP_i}(M)$ is an object in $(\scrD \circ \iota_{\calP_i}) \lRep$, which can be also identified with an object in $\scrD_i$. With this identification, one can see that $\C_i(M) = \C_{\calP_i}(M)_i$. On the other hand, note that an object $M$ in $(\scrD \circ \iota_{\calP_i}) \lRep$ is uniquely determined by an object $M_i \in \scrD_i$ equipped with the actions of endomorphisms of the object $i$. Applying the functor $\lif^{\calP_i}$ one has
\[
(\lif^{\calP_i}(M_i))_j =
\begin{cases}
0, & \text{if } i \neq j;\\
M_i, & \text{otherwise},
\end{cases}
\]
which is different from $\sta^i(M_i)$. The reason is very subtle: when applying $\lif^{\calP_i}$ we regard $M_i$ as a representation over $\scrD \circ \iota_{\calP_i}$, while applying $\sta^i$ we only regard $M_i$ as an object in $\scrD_i$ and hence forget the actions of all non-identity morphisms $\theta: i \to i$ on it.
\end{rmk}

The following result follows immediately from Theorem \ref{coker adjoint pair}.

\begin{cor} \label{left adjoint of l stalk}
Let $i$ be an object in $\Ob(\calI)$. Suppose that $\calI$ is a partially ordered category, $\scrD_i$ satisfies both the axioms $\sf{AB3}$ and $\sf{AB3}^*$, and $\scrD_\gamma$ preserves small coproducts for $\gamma \in \AU i$. Then $(\C_i, \sta^i)$ is an adjoint pair.
\end{cor}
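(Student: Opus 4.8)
The plan is to realize both $\C_i$ and $\sta^i$ as composites of functors each of which belongs to an explicit adjoint pair, and then to use that a composite of left adjoints is left adjoint to the composite of the corresponding right adjoints. By the very construction recalled before the statement, one has $\C_i = \jmath_i^{\ast}\circ\C_{\calP_i}$ and $\sta^i = \lif^{\calP_i}\circ\ran_i$, where $\C_{\calP_i}$ and $\lif^{\calP_i}$ are the cokernel and lift functors attached to the prime ideal $\calP_i = \Mor(\calI)\backslash\AU i$, and $\jmath_i^{\ast}$, $\ran_i$ are the restriction functor along $\jmath_i\colon\calI_i\to\calI/\calP_i$ and its right Kan extension. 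Hence it suffices to establish two adjunctions: (i) $(\C_{\calP_i},\lif^{\calP_i})$ is an adjoint pair, and (ii) $(\jmath_i^{\ast},\ran_i)$ is an adjoint pair.

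For (i) I would invoke Theorem \ref{coker adjoint pair} with $\calP = \calP_i$ (primeness of $\calP_i$ having been observed just above the statement). The hypotheses there ask that $\scrD_j$ satisfy $\sf{AB3}$ for every $j\in\Ob(\calI/\calP_i)$ and that $\scrD_\alpha$ preserve small coproducts for every $\alpha\in\Mor(\calI/\calP_i)$; since $\calI/\calP_i$ is the full subcategory of $\calI$ on the single object $i$, one has $\Ob(\calI/\calP_i)=\{i\}$ and $\Mor(\calI/\calP_i)=\AU i$, so these conditions reduce exactly to the present hypotheses that $\scrD_i$ satisfies $\sf{AB3}$ and that $\scrD_\gamma$ preserves small coproducts for $\gamma\in\AU i$. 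For (ii) I would appeal to the right Kan extension construction described before the statement: as $\calI_i$ contains only the identity $e_i$, the comma category governing the Kan extension is discrete with object set $\AU i$, so the extension is computed as the product $\ran_i(K)=\prod_{\gamma\in\AU i}\scrD_\gamma(K)$, which exists because $\scrD_i$ satisfies $\sf{AB3}^{\ast}$; its structural morphisms are assembled from the coherence isomorphisms $\tau$, and the bijection $\Hom[{(\scrD\circ\iota_{\calP_i})\lRep}]{M}{\ran_i(K)}\cong\Hom[\scrD_i]{\jmath_i^{\ast}M}{K}$ follows componentwise from the universal property of products, dually to the argument that $\fre_i$ is left adjoint to $\eva^i$ in Corollary \ref{l adjoint of evalution and free functor}. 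Composing the two adjunctions then gives that $\C_i=\jmath_i^{\ast}\circ\C_{\calP_i}$ is left adjoint to $\lif^{\calP_i}\circ\ran_i=\sta^i$, which is the claim.

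The steps that need genuine care are, first, the bookkeeping in (i) identifying the objects and morphisms of $\calI/\calP_i$ so that the abstract hypotheses of Theorem \ref{coker adjoint pair} collapse to the ones stated here, and second --- should one wish to argue (ii) in place rather than merely cite the preceding discussion --- verifying that $\prod_{\gamma\in\AU i}\scrD_\gamma(K)$ genuinely carries a representation structure over $\scrD\circ\iota_{\calP_i}$ (that is, that the morphisms built from $\tau$ satisfy (Rep.1) and (Rep.2)) and that the resulting bijection is natural in $M$ and $K$. Once $\ran_i$ is accepted as the right adjoint of $\jmath_i^{\ast}$, as was already set up before the statement, the corollary is an immediate consequence of Theorem \ref{coker adjoint pair} together with the formal stability of adjunctions under composition.
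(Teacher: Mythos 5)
Your proposal is correct and is essentially the paper's own argument: the text preceding the corollary defines $\C_i = \jmath_i^{\ast}\circ\C_{\calP_i}$ and $\sta^i = \lif^{\calP_i}\circ\ran_i$, and the paper's entire proof is the remark that the result ``follows immediately from Theorem \ref{coker adjoint pair}'', i.e.\ from composing the adjunction $(\C_{\calP_i},\lif^{\calP_i})$ of that theorem (whose hypotheses reduce, exactly as you check, to the stated ones because $\Ob(\calI/\calP_i)=\{i\}$ and $\Mor(\calI/\calP_i)=\AU i$) with the adjunction $(\jmath_i^{\ast},\ran_i)$ furnished by the right Kan extension under $\sf{AB3}^{\ast}$. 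You have merely written out the bookkeeping the paper leaves implicit, so there is nothing to add.
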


\begin{exa}
The stalk functor and its left adjoint have been widely used in literature. In particular, when $\calI$ is the free category associated to a quiver $Q = (Q_0, Q_1)$ without oriented cycles, these two functors have very simple descriptions. For instance, if $\scrD$ is a trivial $\calI$-diagram over a fixed abelian category $\mathcal{A}$ and $V: \calI \to \mathcal{A}$ is a functor, then $\C_i(V)$ is the quotient $V_i / \sum_{\theta \in Q_1(\bullet, i)} V_{\theta}(V_\bullet)$, where $Q_1(\bullet, i)$ is the set of arrows with target $i$.
\end{exa}

\begin{rmk} \label{stalk is exact}
If $\scrD \circ \iota_{\calP_i}$ is exact and $\scrD_i$ satisfies the axiom $\sf{AB4}^*$, then $\ran_i$ is exact as well. Consequently, $\sta^i$ is exact since so is ${\lif}^{\calP_i}$.
\end{rmk}

In the rest of this subsection, we briefly describe some dual constructions and results under the assumption that $\scrD$ admits enough right adjoints.
For $\alpha: i \to j \in \Mor(\calI)$,
denote by
\begin{itemize}
\item $\scrD_\alpha^*$ the right adjoint of $\scrD_\alpha$;

\item $M_\alpha^* : M_i \to \scrD_\alpha^*(M_j)$ the adjoint morphism of the structural morphism $M_\alpha : \scrD_\alpha(M_i) \to M_j$.
\end{itemize}

Let $\calP$ be a prime ideal of $\calI$. For $i \in \Ob(\calI)$, denote
\[
\calP(i, \bullet) = \{\theta \in \Mor(\calI)\, \mid \, \theta \in \calP \text{ with source } i\}.
\]
Suppose that $\scrD_i$ satisfies the axiom $\sf{AB3}^*$ for $i \in \Ob(\calI/\calP)$. By the universal property of limits, there exists a unique morphism ${\psi}_i^{M}$ such that the diagram
\begin{equation} \label{psi}
\xymatrix{
M_i \ar[dr]_{M_\theta^*} \ar[r]^-{{\psi}_i^{M}} &
\lim_{\theta \in \calP(i, \bullet)} \scrD_\theta^*(M_{t(\theta)}) \ar[d] \\
& \scrD_\theta^*(M_{t(\theta)})
}
\end{equation}
in $\scrD_i$ commutes. This commutative diagram enables us to define $\K_\calP : \scrD\lRep \to (\scrD \circ \iota_\calP)\lRep$, which is dual in some sense to $\C_\calP$. One obtains the following result dual to Theorem \ref{coker adjoint pair}.

\begin{thm} \label{ker adjoint pair}
Let $i$ be an object in $\Ob(\calI)$.
Suppose that $\scrD$ admits enough right adjoints and $\scrD_i$ satisfies the axiom $\sf{AB3}^*$ for $i \in \Ob(\calI/\calP)$. Then $\K_\calP$ given above is a functor, and is the right adjoint of ${\lif}^{\calP}$.
\end{thm}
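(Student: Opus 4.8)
The plan is to deduce this from Theorem \ref{coker adjoint pair} by passing to opposite categories. Since $\scrD$ admits enough right adjoints, each functor $\scrD_\alpha$ for $\alpha: i \to j$ in $\calI$ has a right adjoint $\scrD_\alpha^*: \scrD_j \to \scrD_i$, and these assemble into an $\calI\op$-diagram $\scrD\op$ of abelian categories whose value at an object $i$ is $(\scrD_i)\op$ and whose value at a morphism $\alpha\op: j \to i$ of $\calI\op$ is $(\scrD_\alpha^*)\op$, with coherence isomorphisms given by the mates of $\tau$ and $\eta$. First I would verify that sending a structural morphism $M_\alpha$ to its adjoint morphism $M_\alpha^*$ induces an equivalence of categories $\scrD \lRep \simeq \scrD\op \lRep$; this reduces to checking that the mate correspondence carries the axioms (Rep.1) and (Rep.2) for a representation over $\scrD$ to the corresponding axioms for $\scrD\op$, a formal diagram chase using the triangle identities.

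Next I would identify, under this equivalence, the lift functor $\lif^\calP: (\scrD \circ \iota_\calP) \lRep \to \scrD \lRep$ with the analogously-defined lift functor for $\scrD\op$ and for $\calP$ viewed as a prime ideal of $\calI\op$ (here $\calI\op/\calP = (\calI/\calP)\op$, so this makes sense, and $\lif^\calP$ again just adds zeroes), and identify $\K_\calP$ with the cokernel functor $\C_\calP$ built from $\scrD\op$: reversing all arrows turns the comparison morphism $\psi_i^M$ of (\ref{psi}) into the morphism $\varphi_i^M$ of (\ref{map-def}) computed inside $(\scrD_i)\op$, turns the limit $\lim_{\theta \in \calP(i,\bullet)} \scrD_\theta^*(M_{t(\theta)})$ into the corresponding colimit, and turns $\ker(\psi_i^M)$ into $\coker(\varphi_i^M)$. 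To apply Theorem \ref{coker adjoint pair} to $\scrD\op$ I would check that its hypotheses transfer: $\scrD\op$ is right exact because every right adjoint is left exact; $(\scrD_i)\op$ satisfies $\sf{AB3}$ for $i \in \Ob(\calI/\calP)$ precisely because $\scrD_i$ satisfies $\sf{AB3}^*$ by hypothesis; and the structural functor $(\scrD_\alpha^*)\op$ preserves small coproducts automatically, since a right adjoint preserves small products. Theorem \ref{coker adjoint pair} then shows that $\C_\calP$ is a functor and a left adjoint of $\lif^\calP$ for $\scrD\op$; translating back through the equivalence $\scrD \lRep \simeq \scrD\op \lRep$, which reverses the direction of adjunctions, shows that $\K_\calP$ is a functor and a right adjoint of $\lif^\calP$.

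The main obstacle I anticipate is making the equivalence $\scrD \lRep \simeq \scrD\op \lRep$ genuinely precise. One must check that the conjugate of $\tau_{\beta,\alpha}$ --- a natural isomorphism $\scrD_{\beta\alpha}^* \Rightarrow \scrD_\alpha^* \circ \scrD_\beta^*$ with the composition order reversed --- together with the conjugate of $\eta$, satisfies the axioms (Dia.1) and (Dia.2) defining an $\calI\op$-diagram (using $(\beta\alpha)\op = \alpha\op \beta\op$), and then that under the adjunction bijection $\Hom[\scrD_j]{\scrD_\alpha(M_i)}{M_j} \cong \Hom[\scrD_i]{M_i}{\scrD_\alpha^*(M_j)}$ the representation axioms and the morphism condition go over correctly. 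This is routine \emph{mates} calculus, but it is the one place requiring care; once it is in place the rest is formal. As an alternative closer in spirit to the rest of the paper, one may avoid the equivalence entirely and transcribe the proof of Theorem \ref{coker adjoint pair} with every arrow reversed, every colimit replaced by the corresponding limit, every cokernel by a kernel, every $\scrD_\alpha$ by $\scrD_\alpha^*$, and each appeal to right exactness of the functors $\scrD_\alpha$ --- always used to see that they carry epimorphisms such as $\pi_i^M$ to epimorphisms --- replaced by the left exactness of $\scrD_\alpha^*$, so that $\scrD_\alpha^*$ carries the monomorphism $\K_\calP(M)_i \hookrightarrow M_i$ to a monomorphism; no new idea is required.
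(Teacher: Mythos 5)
Your proposal is correct and matches the paper's approach: the paper gives no separate argument for Theorem \ref{ker adjoint pair}, simply declaring it ``dual to Theorem \ref{coker adjoint pair},'' and the dualization you spell out (passing to the $\calI\op$-diagram built from the right adjoints $\scrD_\alpha^*$, or equivalently transcribing the proof with all arrows reversed) is exactly what is intended; the hypothesis transfers you check — left exactness and product-preservation of right adjoints, $\sf{AB3}^*$ becoming $\sf{AB3}$ — are the right ones. The only point to tighten is that the equivalence you invoke is contravariant, $(\scrD\lRep)\op \simeq \scrD\op\lRep$, which is precisely what turns the left adjoint $\C_\calP$ on one side into the right adjoint $\K_\calP$ on the other; your remark that it ``reverses the direction of adjunctions'' shows you already have this in hand.
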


Suppose that $\calI$ is a partially ordered category and $\scrD_i$ satisfies the axiom $\sf{AB3}$. Since $\scrD_\alpha$ preserves small coproducts for $\alpha \in \Mor(\calI)$, one obtains an induction functor
\[
(\jmath_i)_!: \scrD_i \cong (\scrD \circ \iota_i) \lRep \longrightarrow (\scrD \circ \iota_{\calP_i}) \lRep
\]
with respect to the inclusion $\jmath_i: \calI_i \to \calI/\calP_i$; see Proposition \ref{l adjoint along G}. Note that the functor $(\jmath_i)_!$ sends an object $K$ in $\scrD_i$ to the object
\[
(\jmath_i)_!(K) = \coprod_{\gamma \in \AU i} \scrD_\gamma(K_i)
\]
in $(\scrD \circ \iota_{\calP_i}) \lRep$. In this case, another \emph{stalk functor} at $i$ is defined to be the composite
\[
\sta_i: \scrD_i \cong (\scrD \circ \iota_i) \lRep \overset{(\jmath_i)_!} \longrightarrow (\scrD \circ \iota_{\calP_i}) \lRep \overset{\lif^{\calP_i}} \longrightarrow \scrD \lRep.
\]
On the other hand, if $\scrD_i$ satisfies the axiom $\sf{AB3}^*$, we define $\K_i$ to be the composite
\[
\K_i: \scrD \lRep \overset{\K_{\calP_i}} \longrightarrow (\scrD \circ \iota_{\calP_i}) \lRep \overset{\jmath_i^*} \longrightarrow (\scrD \circ \iota_i) \lRep \cong \scrD_i.
\]
An immediate consequence of Theorem \ref{ker adjoint pair} is the following result.

\begin{cor} \label{right adjoint of l stalk}
Let $i$ be an object in $\Ob(\calI)$. Suppose that $\calI$ is a partially ordered category, $\scrD$ admits enough right adjoints and $\scrD_i$ satisfies both axioms $\sf{AB3}$ and $\sf{AB3}^*$. Then $(\sta_i, \K_i)$ is an adjoint pair.
\end{cor}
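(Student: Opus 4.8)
The plan is to realize $(\sta_i, \K_i)$ as a composite of two adjunctions already in hand, using the factorizations recorded just above the statement: $\sta_i = \lif^{\calP_i} \circ (\jmath_i)_!$, where $(\jmath_i)_!: \scrD_i \cong (\scrD \circ \iota_i)\lRep \to (\scrD \circ \iota_{\calP_i})\lRep$ is the induction functor along the inclusion $\jmath_i: \calI_i \to \calI/\calP_i$ and $\lif^{\calP_i}: (\scrD \circ \iota_{\calP_i})\lRep \to \scrD\lRep$ is the lift functor, and dually $\K_i = \jmath_i^* \circ \K_{\calP_i}$. Two preliminary observations make the pieces available. First, since $\calI$ is a partially ordered category, $\calP_i = \Mor(\calI) \backslash \AU i$ is a prime ideal, so $\lif^{\calP_i}$ and $\K_{\calP_i}$ exist. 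Second, the hypothesis that $\scrD$ admits enough right adjoints forces every $\scrD_\alpha$ to have a right adjoint, hence to preserve all small colimits; in particular $\scrD_\gamma$ preserves small coproducts for each $\gamma \in \AU i$, which is precisely what the construction of $(\jmath_i)_!$ needs.

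First I would check that $(\jmath_i)_!$ is a left adjoint of $\jmath_i^*$ by applying Proposition \ref{l adjoint along G} to the $\calI/\calP_i$-diagram $\scrD \circ \iota_{\calP_i}$ and the functor $\jmath_i$. Its hypotheses require that every component category of $\scrD \circ \iota_{\calP_i}$ satisfy $\sf{AB3}$ and every structural functor preserve small coproducts; but $\calI/\calP_i$ has the single object $i$ and morphism set $\AU i$, so these reduce respectively to ``$\scrD_i$ satisfies $\sf{AB3}$'', which is assumed, and ``$\scrD_\gamma$ preserves small coproducts for $\gamma \in \AU i$'', noted above. Next I would invoke Theorem \ref{ker adjoint pair} with the prime ideal $\calP_i$; its hypotheses, namely that $\scrD$ admits enough right adjoints and that $\scrD_k$ satisfies $\sf{AB3}^*$ for every $k \in \Ob(\calI/\calP_i)$, hold because $\Ob(\calI/\calP_i) = \{i\}$ and $\scrD_i$ satisfies $\sf{AB3}^*$ by assumption. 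This yields that $\K_{\calP_i}$ is a right adjoint of $\lif^{\calP_i}$.

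Finally I would combine the two adjunctions via the standard fact that a composite of left adjoints is left adjoint to the composite, in the reverse order, of the corresponding right adjoints. Hence $\sta_i = \lif^{\calP_i} \circ (\jmath_i)_!$ is a left adjoint of $\jmath_i^* \circ \K_{\calP_i} = \K_i$; that is, $(\sta_i, \K_i)$ is an adjoint pair, as claimed. I do not anticipate any real obstacle: the corollary is a formal consequence of Proposition \ref{l adjoint along G} and Theorem \ref{ker adjoint pair}. The only point deserving a moment's care is the bookkeeping by which the ``for all objects and morphisms of the index category'' hypotheses of those two results collapse, over the one-object category $\calI/\calP_i$, to the stated conditions on $\scrD_i$ alone, together with the remark that ``enough right adjoints'' is exactly what supplies the coproduct-preservation needed for $(\jmath_i)_!$ to be defined.
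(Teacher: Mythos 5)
Your proof is correct and follows the paper's intended route: the paper presents this corollary as an immediate consequence of Theorem \ref{ker adjoint pair} (giving $(\lif^{\calP_i},\K_{\calP_i})$) together with the adjunction $((\jmath_i)_!,\jmath_i^*)$ from Proposition \ref{l adjoint along G}, composed exactly as you describe. Your extra bookkeeping — that ``enough right adjoints'' supplies coproduct preservation and that the hypotheses of the two cited results collapse over the one-object category $\calI/\calP_i$ to conditions on $\scrD_i$ alone — is accurate and fills in what the paper leaves implicit.
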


\section{Characterizations of special homological objects}
\label{Classifications of special homological objects}
\noindent
In this section we introduce a few index categories $\calI$ satisfying certain combinatorial conditions, and characterize special homological objects in $\scrD \lRep$.

\subsection{Left rooted categories and direct categories} \label{rooted categories}

In this subsection we introduce left rooted categories and direct categories, which are partially ordered categories satisfying extra combinatorial conditions. We begin by introducing the following transfinite sequence inspired by the work \cite{EOT04} of Enochs, Oyonarte and Torrecillas.

\begin{ipg}\label{construct}
Suppose that $\calI$ is a partially ordered category, that is, the relation $\preccurlyeq$ on $\Mor(\calI)$ defined by setting $i \preccurlyeq j$ if $\Hom[\calI]i{j} \neq \emptyset$ is a partial order. Define a transfinite sequence $\{V_{\chi}\}_{\chi \, \mathrm{ordinal}}$ of subsets of $\Ob(\calI)$ as follows:
\begin{itemize}
\item for the first ordinal $\chi = 0$, set $V_0 = \emptyset$;

\item for a successor ordinal $\chi+1$, set
 \[
V_{\chi+1} = \left \{ i \in \Ob(\calI) \:
      \left|
        \begin{array}{c}
          i \text{ is not the target of any } \alpha \in \Mor(\calI) \\
          \text{with source } s(\alpha)\neq i \text{ and } s(\alpha) \notin \cup_{\mu \leqslant \chi} V_{\mu}
        \end{array}
      \right.
    \right\};
\]

\item for a limit ordinal $\chi$, set $V_{\chi} = \cup_{\mu < \chi} V_{\mu}$.
\end{itemize}
\end{ipg}

\begin{rmk} \label{rooted cate}
Using the partial order $\preccurlyeq$ defined before, we can give the slightly mysterious definition of the above transfinite sequences a more transparent interpretation. That is, $V_0 = \emptyset$, $V_1$ consists of objects in $\Ob(\calI)$ which is minimal with respect to the partial order $\preccurlyeq$, and $V_{\chi + 1}$ is the union of $V_{\chi}$ and the set of minimal elements in $\Ob(\calI) \backslash V_{\chi}$. By this observation, one deduces a chain $V_1 \subseteq V_2 \subseteq \cdots \subseteq \Ob(\calI)$.
\end{rmk}

\begin{rmk} \label{V forms an ideal}
Recall that a nonempty subset $S$ of a poset $(P, \preccurlyeq)$ is said to be an \emph{ideal} if the following is true: for $j \in S$ and $i \in P$, if $i \preccurlyeq j$, then $i \in S$ as well. In the case that $\calI$ is a partially ordered category, it is easy to check that each $V_{\chi}$ in \ref{construct} with $\chi \geqslant 1$ is an ideal of the poset $(\Ob(\calI), \preccurlyeq)$. Furthermore, if $\chi$ is a successor ordinal, $i \prec j$, and $j \in V_{\chi}$, then $i \in V_{\chi -1}$.
\end{rmk}

Now we are ready to define left (right) rooted categories.

\begin{dfn} \label{df of rooted category}
A partially ordered category $\calI$ is said to be \emph{left rooted} if there exists an ordinal $\zeta$ such that $V_{\zeta} = \Ob(\calI)$.
We say that $\calI$ is \emph{right rooted} if $\calI\op$ is left rooted.
\end{dfn}

\begin{rmk}\label{rmk3.6}
A partially ordered category $\calI$ is left rooted if and only if the poset $(\Ob(\calI), \preccurlyeq)$ is \textit{artinian}, that is, the poset has no infinite descending chain. If $\calI$ is the free category associated to a quiver $Q$ without loops or oriented cycles, then $Q$ is a left rooted quiver in the sense of \cite{EOT04} if and only if $\calI$ is a left rooted category.
\end{rmk}

\textit{Direct categories} are special left rooted categories, which play a prominent role in this paper for classifying special homological objects. The following definition is taken from \cite[Definition 5.1.1]{Ho99}.

\begin{dfn} \label{dfn of direct categories}
A skeletal small category $\calI$ is called a \textit{direct category} if there exists a functor $F: \calI \to \zeta$, where $\zeta$ is an ordinal (viewed as a category in a natural way) such that $F$ sends non-identity morphisms in $\calI$ to non-identity morphisms in $\zeta$. We say that $\calI$ is an \emph{inverse} category if $\calI\op$ is a direct category.
\end{dfn}

A small category $\calI$ is called \emph{locally trivial} if $\AU i$ contains only the identity morphism for all $i \in \Ob(\calI)$, which have been studied in \cite{stein}. It is easy to see that locally trivial categories are partially ordered categories.

The following result clarifies relations among notions introduced above.

\begin{prp} \label{direct categories}
A small skeletal category $\calI$ is a direct category if and only if it is a locally trivial and left rooted category.
\end{prp}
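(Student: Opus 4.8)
The plan is to prove the two implications separately, exploiting the transfinite filtration $\{V_\chi\}$ from \ref{construct} together with the characterization of left rootedness as the artinian condition on $(\Ob(\calI),\preccurlyeq)$ (Remark \ref{rmk3.6}).

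\emph{Direct $\Rightarrow$ locally trivial and left rooted.} Suppose $\calI$ is direct, witnessed by a functor $F\colon\calI\to\zeta$ sending non-identities to non-identities. First, if $\gamma\in\AU i$ were a non-identity endomorphism, then $F(\gamma)$ would be a non-identity morphism in $\zeta$ from $F(i)$ to $F(i)$; but $\zeta$ is an ordinal, so its only endomorphisms are identities, a contradiction. Hence $\calI$ is locally trivial, and in particular the relation $\preccurlyeq$ is anti-symmetric: if $i\preccurlyeq j$ and $j\preccurlyeq i$ via morphisms $\alpha\colon i\to j$ and $\beta\colon j\to i$, then $F(\alpha)\le F(\beta)\le F(\alpha)$ forces $F(i)=F(j)$, and then $F(\beta\alpha)=F(\id_i)$ would have to be an identity, so $\beta\alpha$ is an identity; by a symmetric argument $\alpha\beta$ is an identity too, and since $\calI$ is skeletal this forces $i=j$. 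Thus $\calI$ is a partially ordered category. For left rootedness, I would show $(\Ob(\calI),\preccurlyeq)$ is artinian: an infinite strictly descending chain $i_0\succ i_1\succ i_2\succ\cdots$ would, via morphisms $i_{n+1}\to i_n$ which are necessarily non-identities (anti-symmetry again), yield a strictly descending chain $F(i_0)>F(i_1)>F(i_2)>\cdots$ of ordinals, contradicting the well-ordering of $\zeta$. By Remark \ref{rmk3.6}, $\calI$ is left rooted.

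\emph{Locally trivial and left rooted $\Rightarrow$ direct.} Now suppose $\calI$ is locally trivial and left rooted. Because $\calI$ is left rooted there is an ordinal $\zeta$ with $V_\zeta=\Ob(\calI)$, and every object $i$ lies in some $V_{\chi+1}\setminus V_\chi$ for a unique ordinal; define $F(i)$ to be the least $\chi$ such that $i\in V_{\chi+1}$ (equivalently, by Remark \ref{rooted cate}, the "rank" of $i$ in the minimal-element stratification). I must check $F$ extends to a functor $\calI\to(\zeta,\le)$ sending non-identities to non-identities; since the target is a poset it suffices to verify that every non-identity morphism $\alpha\colon i\to j$ satisfies $F(i)<F(j)$. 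Local triviality gives $i\ne j$ (a non-identity endomorphism is excluded), so $i\prec j$ in the poset $(\Ob(\calI),\preccurlyeq)$. Writing $F(j)=\chi$, so that $j\in V_{\chi+1}$; by Remark \ref{V forms an ideal}, since $V_{\chi+1}$ is an ideal and $j\in V_{\chi+1}$ is not minimal in $V_{\chi+1}\setminus V_\chi$ relative to having a strictly smaller element mapping in, the defining clause for $V_{\chi+1}$ forces the source of any arrow into $j$ with source $\ne j$ to lie in $\cup_{\mu\le\chi}V_\mu=V_\chi$ — wait, more precisely, if $\chi$ is a successor ordinal the last sentence of Remark \ref{V forms an ideal} gives $i\in V_\chi$ directly, hence $F(i)<\chi=F(j)$; if $\chi$ is a limit ordinal then $j\in V_\chi=\cup_{\mu<\chi}V_\mu$ contradicts $F(j)=\chi$ being minimal, so this case does not occur. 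In all cases $F(i)<F(j)$, and $F$ is the required functor, so $\calI$ is direct.

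\emph{Main obstacle.} The delicate point is the second implication: correctly reading off, from the transfinite construction \ref{construct}, that a non-identity morphism strictly decreases the rank. This hinges on unwinding the somewhat opaque defining condition of $V_{\chi+1}$ and on the observation recorded in Remark \ref{V forms an ideal} that for a successor ordinal $\chi$, $i\prec j$ and $j\in V_\chi$ imply $i\in V_{\chi-1}$; I would make sure the rank function is well-defined (every object has a least such $\chi$, using $V_\zeta=\Ob(\calI)$ and that $V_0=\emptyset$) and that limit stages genuinely contribute no "new" objects, so that $F(i)$ always has successor-ordinal-plus-one shape in the relevant comparison. The first implication, by contrast, is routine once anti-symmetry of $\preccurlyeq$ is established from local triviality and the functor $F$ to the ordinal.
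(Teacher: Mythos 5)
Your argument is correct and follows essentially the same route as the paper: extract local triviality from $F$ and deduce left rootedness in one direction, and build $F$ from the stratification $\{V_\chi\}$ of \ref{construct} in the other (you route the forward implication through the artinian characterization of Remark~\ref{rmk3.6} where the paper defines the sets $V_\chi$ directly from $F$, but this is cosmetic, and your explicit check that $\preccurlyeq$ is anti-symmetric is a detail the paper elides). One slip in the second implication: the assertion that the case ``$F(j)$ is a limit ordinal'' does not occur is false --- in the poset $0<1<2<\cdots<\omega$ the top element first enters the filtration at $V_{\omega+1}$, so its rank is the limit ordinal $\omega$; what is true is only that the least $\nu$ with $j\in V_\nu$ is always a successor. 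Fortunately the case split is unnecessary: Remark~\ref{V forms an ideal} (equivalently, the defining clause of $V_{\chi+1}$) should be applied to the ordinal $F(j)+1$, which is always a successor, and your first, uniform argument --- any arrow into $j$ with source $i\neq j$ forces $i\in V_{F(j)}$, whence $F(i)<F(j)$ --- already covers every case.
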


\begin{prf*}
Suppose that $\calI$ is a direct category and let $F: \calI \to \zeta$ be the functor in Definition \ref{dfn of direct categories}. If there exists $i \in \Ob(\calI)$ and a non-identity morphism $f: i \to i$, then $F(i) < F(i)$ by the definition of direct categories, which is absurd. Thus for every $i \in \Ob(\calI)$, there exists no non-identity morphisms from $i$ to itself, that is, $\calI$ is locally trivial. Furthermore, by defining
$V_{\chi} = \{ i \in \Ob(\calI) \mid F(i) \leqslant \chi \}$, one can construct the desired transfinite sequence described in \ref{construct}. Clearly, one has $V_{\zeta} = \Ob(\calI)$, so $\calI$ is left rooted.

Conversely, suppose that $\calI$ is a locally trivial and left rooted category. Then there exists an ordinal $\zeta$ and a transfinite sequence $\{ V_{\chi} \}_{\chi \, \mathrm{ordinal}}$ such that $V_{\zeta} = \Ob(\calI)$.
Now we define another sequence of subsets of $\Ob(\calI)$ as follows: $U_0 = \emptyset$, and
\[
U_{\chi} = V_{\chi} \backslash (\bigcup_{\mu < \chi} V_{\mu}).
\]
Equivalently, $U_{\chi + 1} = V_{\chi+1} \backslash V_{\chi}$, and $U_{\chi} = \emptyset$ if $\chi$ is a limit ordinal. Then one has
\[
\Ob(\calI) = \bigsqcup_{\chi \leqslant \zeta} U_{\chi}.
\]
Note that objects in each $U_{\chi}$ are disjoint, that is, there is no morphisms among these objects. Now define a functor $F: \calI \to \zeta$ such that $F(i) = \chi_i$ for $i \in U_{\chi_i}$ and $F$ sends a morphism $i \to j$ to the unique morphism $\chi_i \leqslant \chi_j$. It is not hard to check that $F$ is well defined. Furthermore, since $\calI$ is locally trivial, $F$ maps non-identity morphisms to non-identity morphisms. Thus $\calI$ is a direct category.
\end{prf*}

\begin{rmk}
The above result on left rooted and direct categories has a dual version for right rooted categories and inverse categories.
\end{rmk}

We end this subsection by giving a few examples appearing frequently in combinatorics, representation theory, and algebraic topology.

\begin{exa} \label{exa tri-root 1}
Any artinian poset is a direct category. For instance, the poset of positive integers and division is an artinian poset. The semi-simplicial category $\Delta_+$ with objects $[n] = \{1, 2, \ldots, n\}$ for $n \geqslant 1$ and morphisms order-preserving injections is a direct category.
\end{exa}

\begin{exa} \label{exa tri-root 2}
Skeletal full subcategories of the category of finite sets and injections is a left rooted category. Similarly, let $A$ be an associative ring. Then skeletal full subcategories of the category of free left $A$-modules of finite rank and $A$-linear injections is left rooted. A skeleton of the category of finitely generated left $A$-modules and injective module homomorphisms is a left rooted category if and only if $A$ is left artinian.
\end{exa}

\subsection{Characterizations of projective and injective objects} \label{projectives and injectives}
In this subsection we apply the functors constructed in Section
\ref{Sec: The evaluation functor and its adjunctions} to characterize projective and injective objects in $\scrD \lRep$.

\begin{setup*}
Throughout this subsection, let $\calI$ be a partially ordered category, and $(\scrD, \eta, \tau)$ be a right exact $\calI$-diagram of Grothendieck categories admitting enough projectives.
\end{setup*}

We mention that $\calP_i = \Mor(\calI) \backslash \AU i$ is a prime ideal of $\Mor(\calI)$ for each $i \in \Ob(\calI)$, and there is an inclusion functor $\iota_{\calP_i}: \calI/\calP_i \to \calI$, which induces a restriction functor $\iota_{\calP_i}^{\ast}: \scrD \lRep \to (\scrD \circ \iota_{\calP_i}) \lRep$ and an induction functor $(\iota_{\calP_i})_!: (\scrD \circ \iota_{\calP_i}) \lRep \to \scrD \lRep$.
The following lemma is straightforward. For definitions of $\C_i$ and $\C_{\calP_i}$, please refer to Subsection \ref{Sec: The stalk functor and its adjunctions}.

\begin{lem} \label{cor is projective}
Let $i$ be an object in $\Ob(\calI)$ and suppose that $\scrD_\gamma$ preserves small coproducts for $\gamma \in \AU i$. If $P$ is a projective object in $\scrD \lRep$, then $\C_{\calP_i}(P)$ is a projective object in $(\scrD \circ \iota_{\calP_i}) \lRep$. If furthermore $\scrD_i$ satisfies the axiom $\sf{AB4}^*$ and $\scrD \circ \iota_{\calP_i}$ is exact, then $\C_i(P)$ is a projective object in $\scrD_i$.
\end{lem}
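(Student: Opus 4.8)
The plan is to deduce both assertions from the adjunctions constructed in Section~\ref{Sec: The evaluation functor and its adjunctions}, using the elementary fact that a left adjoint of an exact functor preserves projective objects. For the first assertion, Theorem~\ref{coker adjoint pair} provides the adjoint pair $(\C_{\calP_i}, \lif^{\calP_i})$; its hypotheses hold here because $\calI/\calP_i$ consists of the single object $i$, the category $\scrD_i$ is Grothendieck and hence satisfies $\sf{AB3}$, and $\scrD_\gamma$ preserves small coproducts for every $\gamma \in \AU i = \Mor(\calI/\calP_i)$ by assumption. By Lemma~\ref{The lift functor} the right adjoint $\lif^{\calP_i}$ is exact, so for a projective object $P$ of $\scrD \lRep$ the functor
\[
\Hom[{(\scrD \circ \iota_{\calP_i})\lRep}]{\C_{\calP_i}(P)}{-} \;\cong\; \Hom[{\scrD \lRep}]{P}{\lif^{\calP_i}(-)}
\]
is a composite of exact functors, hence exact; therefore $\C_{\calP_i}(P)$ is projective in $(\scrD \circ \iota_{\calP_i})\lRep$.

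For the second assertion, recall that $\C_i = \jmath_i^{\ast} \circ \C_{\calP_i}$ and that its right adjoint is the stalk functor $\sta^i = \lif^{\calP_i} \circ \ran_i$; more precisely, Corollary~\ref{left adjoint of l stalk} shows that $(\C_i, \sta^i)$ is an adjoint pair, since $\calI$ is partially ordered, $\scrD_i$ is Grothendieck (so it satisfies both $\sf{AB3}$ and $\sf{AB3}^{\ast}$, Grothendieck categories being complete), and $\scrD_\gamma$ preserves small coproducts for $\gamma \in \AU i$. Under the additional hypotheses of the lemma, namely that $\scrD_i$ satisfies $\sf{AB4}^{\ast}$ and that $\scrD \circ \iota_{\calP_i}$ is exact, Remark~\ref{stalk is exact} gives that $\ran_i$, and therefore also $\sta^i = \lif^{\calP_i} \circ \ran_i$, is exact. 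Arguing exactly as in the first part, $\C_i$ is left adjoint to an exact functor and hence carries the projective object $P$ to a projective object $\C_i(P)$ of $\scrD_i$.

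I do not expect a serious obstacle: the substantive work — constructing $\C_{\calP_i}$, $\lif^{\calP_i}$, $\ran_i$ and $\sta^i$, establishing the relevant adjunctions, and proving the exactness of $\lif^{\calP_i}$ and of $\ran_i$ — has already been done in the earlier sections. The only points I would take care to spell out are that the standing assumptions of the subsection genuinely feed the cited statements (in particular, that ``Grothendieck'' supplies both $\sf{AB3}$ and $\sf{AB3}^{\ast}$, and that the hypothesis on preservation of small coproducts is precisely what Theorem~\ref{coker adjoint pair} and Corollary~\ref{left adjoint of l stalk} demand), together with the standard homological fact that the left adjoint of an exact functor preserves projectives.
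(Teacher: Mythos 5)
Your proof is correct and follows essentially the same route as the paper: both parts are obtained from the adjunctions $(\C_{\calP_i},\lif^{\calP_i})$ of Theorem~\ref{coker adjoint pair} and $(\C_i,\sta^i)$ of Corollary~\ref{left adjoint of l stalk}, together with the exactness of $\lif^{\calP_i}$ and of $\sta^i$ (via Remark~\ref{stalk is exact}) and the fact that a left adjoint of an exact functor preserves projectives. Your version merely spells out the hypothesis-checking that the paper leaves implicit.
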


\begin{prf*}
It follows from Theorem \ref{coker adjoint pair} that $(\C_{\calP_i}, \lif^{\calP_i})$ is an adjoint pair, and the functor $\lif^{\calP_i}$ is exact. Thus the first conclusion holds. By Corollary \ref{left adjoint of l stalk}, $(\C_i, \sta^i)$ is an adjoint pair, and the functor $\sta^i$ is exact whenever $\scrD_i$ satisfies the axiom $\sf{AB4}^*$ and $\scrD \circ \iota_{\calP_i}$ is exact; see Remark \ref{stalk is exact}. Thus the second conclusion holds.
\end{prf*}

Given a family $\calX = \{\calX_i\}_{i \in \Ob(\calI)}$ with each $\calX_i$ a subcategory of $\scrD_i$, we use the functor $\C_\bullet$ to define a subcategory $\Phi(\calX)$ of $\scrD \lRep$. This subcategory will play a key role for us to characterize homological objects in $\scrD \lRep$ and to describe the cofibrant objects in the abelian model structures on $\scrD \lRep$ in our next paper.

\begin{dfn} \label{s and phi}
Suppose that $\scrD_\alpha$ preserves small coproducts for $\alpha \in \Mor(\calI)$. Define a subcategory of $\scrD \lRep$:
\[
\Phi(\calX)= \{ X \in \scrD \lRep \mid \varphi_i^X \text{ is a monomorphism and } \C_i(X) \in {\calX}_i \text{ for each } i \in \Ob(\calI) \},
\]
where $\varphi_i^X: \colim_{\theta \in \calP_i(\bullet, i)} \scrD_\theta(X_{s(\theta)})\to X_i$ is given in (\ref{map-def}) and $\C_i(X)=\coker(\varphi_i^X)$. In particular,
\[
\Phi(\scrD) = \{ X \in \scrD \lRep \mid \varphi_i^X \text{ is a monomorphism for each } i \in \Ob(\calI) \}.
\]
\end{dfn}

\begin{lem}\label{lem3.15}
Suppose that $\scrD_\alpha$ preserves small coproducts for $\alpha \in \Mor(\calI)$. Then $\fre_j(M_j)$ is in $\Phi(\scrD)$ for each $j \in \Ob(\calI)$ and all objects $M_j$ in $\scrD_j$.
\end{lem}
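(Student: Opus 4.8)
The plan is to make the free representation $\fre_j(M_j)$ fully explicit and then to verify the required monomorphism property through a short case analysis on the object $i$. Write $X:=\fre_j(M_j)$. Recall from the paragraph preceding Corollary~\ref{l adjoint of evalution and free functor} that $X_k=\coprod_{\theta\in\Hom[\calI]{j}{k}}\scrD_\theta(M_j)$ for every $k\in\Ob(\calI)$, and, unwinding the definition of $(\iota_j)_!$ through diagram~(\ref{universal of colimit}), that for $\alpha\colon k\to l$ the structural morphism $X_\alpha\colon\scrD_\alpha(X_k)\to X_l$ carries the summand $\scrD_\alpha(\scrD_\theta(M_j))$ by the isomorphism $\tau_{\alpha,\theta}(M_j)$ onto $\scrD_{\alpha\theta}(M_j)$, viewed as the $(\alpha\theta)$-summand of $X_l$ (this uses that $\scrD_\alpha$ preserves small coproducts). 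We must show that, for every $i\in\Ob(\calI)$, the canonical map $\varphi_i^X\colon\colim_{\theta\in\calP_i(\bullet,i)}\scrD_\theta(X_{s(\theta)})\to X_i$ of~(\ref{map-def}) occurring in Definition~\ref{s and phi} is a monomorphism. Since $\calI$ is partially ordered, the endomorphisms of $i$ are exactly $\AU i$, so here $\calP_i(\bullet,i)=\{\theta\in\Mor(\calI)\mid t(\theta)=i,\ s(\theta)\ne i\}$, and the colimit is taken over the full subcategory of the slice category $\calI/i$ on this set of objects, a morphism $\theta\to\theta'$ being any $\beta\in\Mor(\calI)$ with $\theta'\beta=\theta$.

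The easy case is that in which $j=i$ or $\Hom[\calI]{j}{i}=\emptyset$: for any $\theta\colon k\to i$ in $\calP_i(\bullet,i)$ one then has $X_k=0$, because a morphism $j\to k$ would yield $j\preccurlyeq k\preccurlyeq i$, hence $\Hom[\calI]{j}{i}\ne\emptyset$ and, when $j=i$, also $k=i$ by anti-symmetry, a contradiction in either situation. Thus the domain of $\varphi_i^X$ is a colimit of zero objects and hence vanishes, so $\varphi_i^X$ is trivially a monomorphism.

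The substantive case is $j\ne i$ together with $\Hom[\calI]{j}{i}\ne\emptyset$; here I would prove the stronger assertion that $\varphi_i^X$ is an isomorphism, by showing that $\bigl(X_i,\{X_\theta\}_{\theta\in\calP_i(\bullet,i)}\bigr)$ is a colimit cocone for the diagram $\theta\mapsto\scrD_\theta(X_{s(\theta)})$. That it is a cocone follows at once from axiom~(Rep.1) for $X$. For the universal property, note that every $\mu\colon j\to i$ belongs to $\calP_i(\bullet,i)$ (because $j\ne i$), and that axiom~(Dia.2) combined with the natural isomorphism $\eta_j$ yields $X_\mu\circ c_\mu=\iota_\mu$, where $c_\mu\colon\scrD_\mu(M_j)\xrightarrow{\scrD_\mu(\eta_j(M_j))}\scrD_\mu(\scrD_{e_j}(M_j))\hookrightarrow\scrD_\mu(X_j)$ is the inclusion of the $e_j$-summand and $\iota_\mu$ denotes the inclusion of the $\mu$-summand of $X_i$. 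Given an arbitrary cocone $(Z,\{f_\theta\})$ one defines $g\colon X_i\to Z$ on the $\mu$-summand to be $f_\mu\circ c_\mu$; the identity $g\circ X_\theta=f_\theta$ for every $\theta\in\calP_i(\bullet,i)$ is checked one summand of $\scrD_\theta(X_{s(\theta)})$ at a time, applying, for $\nu\in\Hom[\calI]{j}{s(\theta)}$, the cocone relation along the morphism $\nu\colon(\theta\nu)\to\theta$ of the index category and then cancelling with the naturality of $\tau$ and axiom~(Dia.2); uniqueness of $g$ is clear since the inclusions $\iota_\mu$ are jointly epimorphic. Hence $\varphi_i^X$ is an isomorphism, in particular a monomorphism, and together with the easy case this shows $\fre_j(M_j)\in\Phi(\scrD)$.

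I expect the only genuine difficulty to be the coherence bookkeeping in the substantive case: one has to track carefully how the isomorphisms $\tau_{-,-}$ and $\eta_j$ interact with the coproduct decompositions of the various $X_k$ when computing $X_\theta$, the transition maps of the index diagram, and $c_\mu$. It is equally important to use the correct index category---its morphisms being \emph{all} $\beta\in\Mor(\calI)$ with $\theta'\beta=\theta$, not merely those lying in $\calI/\calP_i$---since this is precisely what makes the structural map $X_\theta$ of a decomposable $\theta=\theta'\nu$ factor through $X_{\theta'}$, and hence what forces the colimit to collapse onto $X_i$ rather than being strictly larger. Once the explicit description of $\fre_j(M_j)$ is in hand, the remaining verifications are purely formal.
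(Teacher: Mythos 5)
Your proof is correct, but it takes a genuinely different route from the paper's. The paper also reduces to showing that $\varphi_i^{L}$ is an isomorphism for $i\neq j$, but it gets this essentially for free from the transitivity of induction functors: writing $\iota_j=G\circ F$ with $F\colon\calI_j\to\calI_{\prec i}$ and $G\colon\calI_{\prec i}\to\calI$ the inclusions, it identifies $\fre_j\cong G_!F_!$ and then simply reads off $(G_!K)_i=\colim_{\sigma\colon t\to i,\,t\neq i}\scrD_\sigma(K_t)$ from the general formula for $G_!$, with $K=F_!(M_j)$ having components $K_t=\coprod_{\theta\colon j\to t}\scrD_\theta(M_j)=L_t$. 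You instead verify by hand that $\bigl(X_i,\{X_\theta\}\bigr)$ satisfies the universal property of the colimit, doing the summand-by-summand bookkeeping with $\tau$, $\eta_j$ and (Dia.2) explicitly. The paper's argument is shorter and hides the coherence checks inside Proposition~\ref{l adjoint along G}, but it silently assumes $j\in\calI_{\prec i}$, i.e.\ $\Hom[\calI]{j}{i}\neq\emptyset$; your separate treatment of the degenerate cases $j=i$ and $\Hom[\calI]{j}{i}=\emptyset$ (where the domain of $\varphi_i^X$ vanishes outright) closes that small gap. Your explicit description of the cocone maps $X_\theta$ and of the index category of $\calP_i(\bullet,i)$ — with morphisms all $\beta$ satisfying $\theta'\beta=\theta$, not only those outside $\calP_i$ — is exactly the right reading and is what makes the collapse onto $X_i$ work, so the extra labour buys a self-contained verification at the cost of length.
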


\begin{prf*}
Fix $j\in\Ob(\calI)$ and let $M_j$ be an object in $\scrD_j$. Denote $\fre_j(M_j)$ by $L$ for brevity. It is clear that $\varphi_j^L$ is a monomorpism, so it suffices to show that $\varphi_i^L$ is also a monomorphism for each $i \in \Ob(\calI)$ with $i\neq j$. We prove a stronger result, that is, the morphism
\begin{equation*}
\varphi_i^L: \colim_{\substack{\sigma: \, t \to i \\ t \neq i}} \scrD_{\sigma} (L_t) = \colim_{\substack{\sigma: \, t \to i \\ t \neq i}} \scrD_{\sigma} (\coprod_{\theta: \, j \to t} \scrD_{\theta}(M_j)) \longrightarrow \coprod_{\alpha: \, j \to i} \scrD_\alpha(M_j) = L_i
\end{equation*}
is actually an isomorphism for each $i \in \Ob(\calI)$ with $i\neq j$, where we write
\[
\colim_{\sigma \in \calP_i(\bullet, i)} = \colim_{\substack{\sigma: \, t \to i \\ t \neq i}}
\]
which looks more intuitive. By the universal property of colimits, we only need to show that $L_i$ is isomorphic to the colimit on the left side.

Denote by $\calI_{\prec i}$ the full subcategory of $\calI$ consisting of objects $t$ such that $t \prec i$. Clearly, the object $j$ is contained in $\calI_{\prec i}$. We have two inclusion functors
$F: \calI_j \to \calI_{\prec i}$ and $G: \calI_{\prec i} \to \calI$, whose composite is precisely $\iota_j: \calI_j \to \calI$. Consequently, one has $\fre_j \cong G_!F_!$, where $G_!$ and $F_!$ are the corresponding induction functors. Note that for an object $K$ in $(\scrD \circ G) \lRep$,
\[
(G_!(K))_i = \colim_{\sigma \in G/i} \scrD_{\sigma} (K_{s(\sigma)}) = \colim_{\substack{\sigma: \, t \to i\\ t \neq i}} \scrD_{\sigma} (K_t).
\]
Therefore, for an object $M_j$ in $\scrD_j$,
\[
\coprod_{\alpha: \, j \to i} \scrD_{\alpha} (M_j) = (\fre_j(M_j))_i \cong (G_!(F_!(M_j)))_i = \colim_{\substack{\sigma: \, t \to i\\ t \neq i}} \scrD_{\sigma} \big{(}(F_!(M_j))_t \big{)}.
\]
But one also has
\[
(F_!(M_j))_t = \coprod_{\theta: \, j \to t} \scrD_{\theta} (M_j)
\]
since $F_!$ is induced by the inclusion $\calI_j \to \calI_{\prec i}$. Consequently,
\[
\coprod_{\alpha: \, j \to i} \scrD_{\alpha} (M_j) \cong \colim_{\substack{\sigma: \, t \to i\\ t \neq i}} \scrD_{\sigma} \big{(} \coprod_{\theta: \, j \to t} \scrD_{\theta} (M_j) \big{)}
\]
as claimed.
\end{prf*}

The next result can be checked routinely.

\begin{lem}\label{phi sum and summand}
Suppose that $\scrD_\alpha$ preserves small coproducts for $\alpha \in \Mor (\calI)$. Then $\Phi(\scrD)$ is closed under direct summands and small coproducts.
\end{lem}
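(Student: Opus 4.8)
The plan is to unwind the definition of $\Phi(\scrD)$ and check stability under the two operations componentwise, using that everything in sight (colimits over the over-categories $\calP_i(\bullet,i)$, the functors $\scrD_\theta$, the maps $\varphi_i^{(-)}$) is built naturally and that exactness of a sequence in $\scrD\lRep$ is detected componentwise by Proposition \ref{Rep is abelian}. Recall that $X\in\Phi(\scrD)$ precisely when each $\varphi_i^X:\colim_{\theta\in\calP_i(\bullet,i)}\scrD_\theta(X_{s(\theta)})\to X_i$ is a monomorphism in $\scrD_i$.

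First I would treat small coproducts. Let $\{X^\lambda\}_{\lambda\in\Lambda}$ be a family in $\Phi(\scrD)$ and put $X=\coprod_\lambda X^\lambda$, where the coproduct is formed componentwise (this is legitimate since $\scrD_\alpha$ preserves small coproducts, so the componentwise construction yields a representation, as recorded in Proposition \ref{L AB 345} and the surrounding discussion). The key point is that for each $i$ the source functor $\theta\mapsto\scrD_\theta(X_{s(\theta)})$ on $\calP_i(\bullet,i)$ is the coproduct of the corresponding functors for the $X^\lambda$, because each $\scrD_\theta$ preserves small coproducts; hence $\colim_{\theta}\scrD_\theta(X_{s(\theta)})\cong\coprod_\lambda\bigl(\colim_\theta\scrD_\theta(X^\lambda_{s(\theta)})\bigr)$ by interchange of colimits, and under this identification $\varphi_i^X=\coprod_\lambda\varphi_i^{X^\lambda}$ by the universal property defining $\varphi_i$ in \eqref{map-def}. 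In a Grothendieck category (each $\scrD_i$ is one by the standing Setup) small coproducts of monomorphisms are monomorphisms (axiom $\sf AB4$, which holds by Proposition \ref{L AB 345}), so $\varphi_i^X$ is a monomorphism for every $i$, i.e.\ $X\in\Phi(\scrD)$.

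Next I would treat direct summands. Suppose $X\in\Phi(\scrD)$ and $X\cong Y\oplus Z$ in $\scrD\lRep$; I must show $Y\in\Phi(\scrD)$. Fix $i$. The inclusion $Y\hookrightarrow X$ and projection $X\twoheadrightarrow Y$ are componentwise, and as above the functor on $\calP_i(\bullet,i)$ attached to $X$ splits as the one for $Y$ plus the one for $Z$, giving a splitting $\colim_\theta\scrD_\theta(X_{s(\theta)})\cong\colim_\theta\scrD_\theta(Y_{s(\theta)})\oplus\colim_\theta\scrD_\theta(Z_{s(\theta)})$ compatible with $\varphi_i^X=\varphi_i^Y\oplus\varphi_i^Z$. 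Then $\varphi_i^Y$ is a retract (in the arrow category of $\scrD_i$) of the monomorphism $\varphi_i^X$, and a retract of a monomorphism is a monomorphism; concretely, if $\varphi_i^Y a=\varphi_i^Y b$ then applying the inclusion into $\colim\scrD_\theta(X_{s(\theta)})$ and using monicity of $\varphi_i^X$ gives $a=b$. Hence $Y\in\Phi(\scrD)$.

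I do not expect a genuine obstacle here; the only point requiring a little care is the interchange-of-colimits identification $\colim_\theta\scrD_\theta(\coprod_\lambda X^\lambda_{s(\theta)})\cong\coprod_\lambda\colim_\theta\scrD_\theta(X^\lambda_{s(\theta)})$ and the verification that it intertwines the maps $\varphi_i$, which is exactly the kind of universal-property bookkeeping the statement "can be checked routinely" alludes to. Everything else reduces to the stability of monomorphisms under small coproducts ($\sf AB4$) and under retracts in any abelian category, together with the componentwise nature of (co)limits in $\scrD\lRep$.
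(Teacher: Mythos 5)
Your proof is correct, and since the paper dismisses this lemma with ``can be checked routinely,'' your argument supplies exactly the intended verification: the colimit defining $\varphi_i^{(-)}$ commutes with (co)products because each $\scrD_\theta$ preserves small coproducts and colimits commute with colimits, so $\varphi_i$ of a coproduct (resp.\ summand) is the coproduct (resp.\ summand) of the $\varphi_i$'s, and monomorphisms in the Grothendieck categories $\scrD_i$ are closed under small coproducts and retracts. The only cosmetic slip is attributing $\sf AB4$ to Proposition 1.9 ($\scrD\lRep$) rather than to the standing hypothesis that each $\scrD_i$ is Grothendieck, which is what is actually needed here.
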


The following result gives a characterization of projective objects in $\scrD \lRep$.

\begin{thm} \label{on side for porjecyives}
Suppose that $\scrD_\alpha$ preserves small coproducts for $\alpha \in \Mor(\calI)$. An object $H$ in $\scrD \lRep$ is projective only if the following conditions hold for every $j \in \Ob(\calI)$:
\begin{prt}
\item ${\varphi}_{j}^{H}: \colim_{\sigma \in \calP_j(\bullet, j)} \scrD_\sigma(H_{s(\sigma)}) \to H_j$ is a monomorphism and
\item $\C_{\calP_j} (H)$ is a projective object in $(\scrD \circ \iota_{\calP_j}) \lRep$.
\end{prt}
If furthermore $\calI$ is left rooted, then the converse is true.
\end{thm}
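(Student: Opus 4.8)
The plan is to prove the two implications separately. The forward implication (``only if'') holds without the left rooted hypothesis. For it, suppose $H$ is projective. Condition (b) is immediate from Lemma \ref{cor is projective}, which applies since $\scrD_\gamma$ preserves small coproducts for every $\gamma\in\AU j$. For condition (a), note that each $\scrD_i$, being a Grothendieck category with enough projectives, admits a projective generator (choose a generator $G_i$ and an epimorphism $P_i\twoheadrightarrow G_i$ with $P_i$ projective; then $P_i$ is again a generator), so Remark \ref{generator for R-represetations} applies and $H$ is a direct summand of a coproduct $\coprod_\lambda\fre_{i_\lambda}(P_{i_\lambda})$ with each $P_{i_\lambda}$ projective in $\scrD_{i_\lambda}$. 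By Lemma \ref{lem3.15} each $\fre_{i_\lambda}(P_{i_\lambda})$ lies in $\Phi(\scrD)$, and by Lemma \ref{phi sum and summand} the subcategory $\Phi(\scrD)$ is closed under small coproducts and direct summands; hence $H\in\Phi(\scrD)$, which is exactly condition (a).

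For the converse, assume $\calI$ is left rooted and $H$ satisfies (a) and (b) for every $j$. Let $\{V_\chi\}$ be the transfinite sequence of \ref{construct}, so $V_\zeta=\Ob(\calI)$ for some ordinal $\zeta$. For each ordinal $\chi$ let $H^{[\chi]}\subseteq H$ be the subrepresentation whose value at $j$ is the image in $H_j$ of $\coprod_{\theta\colon t\to j,\, t\in V_\chi}\scrD_\theta(H_t)$, with structural morphisms inherited from $H$; one checks this is a well-defined subrepresentation and that $\chi\mapsto H^{[\chi]}$ is an increasing chain, continuous at limit ordinals (so $H^{[\chi]}=\colim_{\mu<\chi}H^{[\mu]}$ there), with $H^{[0]}=0$ and $H^{[\zeta]}=H$ (the latter because the component indexed by $\theta=e_j$ is an isomorphism onto $H_j$ by (Rep.2)). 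The key point is to show that for every successor ordinal $\chi$ the quotient $L_\chi:=H^{[\chi]}/H^{[\chi-1]}$ is a projective object of $\scrD \lRep$.

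Write $U_\chi=V_\chi\setminus V_{\chi-1}$. By Remark \ref{V forms an ideal}, for $i\in U_\chi$ the index set $\calP_i(\bullet,i)$ is precisely the set of morphisms into $i$ whose source lies in $V_{\chi-1}$, so $(H^{[\chi-1]})_i$ equals the image of $\varphi_i^H$; condition (a) then gives $(H^{[\chi-1]})_i\cong\colim_{\theta\in\calP_i(\bullet,i)}\scrD_\theta(H_{s(\theta)})$, $(L_\chi)_i=\coker(\varphi_i^H)=\C_{\calP_i}(H)_i$, and $(L_\chi)_j=0$ for $j\in V_{\chi-1}$. Tracking structural morphisms through the explicit description of the induction functor $(\iota_{\calP_i})_!$ from Proposition \ref{l adjoint along G}, and invoking (a) once more so that the colimits involved do not collapse, one identifies
\[
L_\chi\;\cong\;\coprod_{i\in U_\chi}(\iota_{\calP_i})_!\bigl(\C_{\calP_i}(H)\bigr).
\]
Each summand is projective: $\C_{\calP_i}(H)$ is projective in $(\scrD \circ \iota_{\calP_i}) \lRep$ by condition (b), and $(\iota_{\calP_i})_!$ preserves projectives, being a left adjoint of the exact restriction functor $\iota_{\calP_i}^{\ast}$ (Lemma \ref{l restriction functor along G}). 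Hence $L_\chi$ is projective.

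Finally, to conclude that $H$ is projective, fix an epimorphism $p\colon M\twoheadrightarrow N$ in $\scrD \lRep$ and a morphism $f\colon H\to N$, and build compatible lifts $g_\chi\colon H^{[\chi]}\to M$ with $p g_\chi=f|_{H^{[\chi]}}$ by transfinite recursion: $\chi=0$ is trivial; at a limit ordinal the $g_\mu$ ($\mu<\chi$) glue, since $\Hom[{\scrD \lRep}]{-}{M}$ carries $H^{[\chi]}=\colim_{\mu<\chi}H^{[\mu]}$ to an inverse limit; and at a successor $\chi$ the exact sequence $0\to H^{[\chi-1]}\to H^{[\chi]}\to L_\chi\to 0$ splits because $L_\chi$ is projective, so lifting $f$ on a complement isomorphic to $L_\chi$ extends $g_{\chi-1}$ to $g_\chi$. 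As $H=H^{[\zeta]}$, the morphism $g_\zeta$ lifts $f$, so $H$ is projective. The main obstacle is the identification $L_\chi\cong\coprod_{i\in U_\chi}(\iota_{\calP_i})_!(\C_{\calP_i}(H))$: it requires carefully reconciling the structural maps of the quotient representation with those of the twisted left Kan extension, and condition (a) is indispensable here, for otherwise each $L_\chi$ is only a proper quotient of $\coprod_{i\in U_\chi}(\iota_{\calP_i})_!(\C_{\calP_i}(H))$, as a single non-injective structural morphism already shows.
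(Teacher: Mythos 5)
Your proof is correct and follows essentially the same route as the paper's: the forward direction is identical (Remark \ref{generator for R-represetations} combined with Lemmas \ref{lem3.15}, \ref{phi sum and summand} and \ref{cor is projective}), and the converse rests on the same transfinite sequence $\{V_\chi\}$ and the same projective building blocks $(\iota_{\calP_i})_!(\C_{\calP_i}(H))$. The only real difference is organizational: the paper exhibits $H$ directly as the coproduct $\coprod_{i\in\Ob(\calI)}(\iota_{\calP_i})_!(\C_{\calP_i}(H))$ and calls the verification routine, whereas you package that same verification as a transfinite filtration with projective subquotients plus a lifting argument, and both versions leave the decisive identification (your $L_\chi\cong\coprod_{i\in U_\chi}(\iota_{\calP_i})_!(\C_{\calP_i}(H))$, the paper's $H\cong H^\zeta$) at the same level of detail.
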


\begin{prf*}
Note that any projective object in $\scrD \lRep$ is isomorphic to a direct summand of a direct sum of members in the family $\{\fre_i(P_i) \mid  i \in \Ob(\calI)\ \mathrm{ and } \ P_i \in \Prj{\scrD_i} \}$; see Remark \ref{generator for R-represetations}. Thus statement (a) follows from Lemmas \ref{lem3.15} and \ref{phi sum and summand}, and statement (b) holds by Lemma \ref{cor is projective}.

Suppose that $\calI$ is a left rooted category. Let $H$ be an object in $\scrD \lRep$ satisfying statements (a) and (b), and let $\{V_{\chi}\}_{\chi \, \mathrm{ordinal}}$ be the transfinite sequence of subsets of $\Ob(\calI)$ defined in \ref{construct}. Then $\Ob(\calI) = V_{\zeta}$ for a certain ordinal $\zeta$. We construct a family $\{ H^{\chi} \}_{\chi \leqslant \zeta}$ of projective subobjects of $H$ as follows.

Set $H^0 = 0$ and $H^1 = \coprod_{i \in V_1} (\iota_{\calP_i})_!(H_i)$. Note that objects in $V_1$ are minimal. Consequently, for $i \in V_1$, one has $H_i = \C_{\calP_i} (H)$ which is projective in $(\scrD \circ \iota_{\calP_i}) \lRep$. Since $(\iota_{\calP_i})_!$ is the left adjoint of the exact functor $\iota_{\calP_i}^{\ast}$ by Proposition \ref{l adjoint along G}, it preserves projective objects. Therefore, $(\iota_{\calP_i})_!(H_i)$ is projective in $\scrD \lRep$, and hence $H^1$ is also projective in $\scrD \lRep$.

For $\chi > 1$, there are two cases:
\begin{prt}
\item $\chi$ is a successor ordinal. Define
\[
H^{\chi} = \coprod_{i \in V_{\chi} \backslash V_{\chi-1}} (\iota_{\calP_i})_!(\C_{\calP_i}(H)),
\]
which is clearly is projective in $\scrD \lRep$ by statement (b).

\item $\lambda$ is a limit ordinal and the projective objects $H^\kappa$ have been constructed for all ordinals $\kappa < \lambda$. Define $H^\lambda = \coprod_{\kappa < \lambda} H^\kappa$, which is projective.
\end{prt}

Now define $H^\zeta = \coprod_{\chi < \zeta} H^\chi$. It is routine to check that $H$ is indeed isomorphic to $H^\zeta$, using the following isomorphism
\[
H_i \cong (\colim_{\theta \in \calP(\bullet, i)} \scrD_{\theta} (H_{s(\theta)})) \oplus \C_{\calP_i}(H).
\]
Thus $H$ is a projective object in $\scrR \lRep$, as desired.
\end{prf*}

\begin{exa}
In Theorem \ref{on side for porjecyives} one cannot replace the functor $\C_{\calP_i}$ by the functor $\C_i$. Here we give a trivial example to illustrate the difference. Let $\calI$ be a cyclic group of order 2 which is viewed as a category with one object $i$ and two morphisms, and let $\scrD$ be the trivial diagram with $\scrD_i$ the category of vector spaces over a field $k$ of characteristic 2. Clearly, an object in $\scrD \lRep$ is nothing but a representation of $\calI$. Let $k$ be the trivial representation. Then it clearly lies in $\Phi(\sf Proj_{\bullet})$, but is not a projective representation of $\calI$. The reason is obvious: although an object $M_i$ in $(\scrD \circ \iota_{\calP_i}) \lRep$ can be viewed as an object in $\scrD_i$, the condition that $M_i$ is projective in $\scrD_i$ cannot guarantee that $M_i$ is also projective in $(\scrD \circ \iota_{\calP_i}) \lRep$.
\end{exa}

An $\calI$-diagram $\scrD$ of abelian categories is called \textit{locally exact} if for any $i \in \Ob(\calI)$ and any endomorphism from $i$ to $i$, the functor $\scrD_{\gamma}$ is exact. In particular, if $\calI$ is locally trivial (for instances, direct or inverse), then $\scrD$ is locally exact.

Denote by $\sf Proj_{\bullet}$ the family $\{\Prj{\scrD_i}\}_{i \in \Ob(\calI)}$. Then one has:

\begin{cor} \label{proj for direct categories}
Suppose that $\scrD$ is locally exact, $\scrD_i$ satisfies the axiom $\sf{AB4}^*$ for $i \in \Ob(\calI)$, and $\scrD_\alpha$ preserves small coproducts for $\alpha \in \Mor(\calI)$. Then
\[
\Prj{\scrD \lRep} \subseteq \Phi(\sf Proj_{\bullet}).
\]
If furthermore $\calI$ is a direct category, then these two categories coincide.
\end{cor}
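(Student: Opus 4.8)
The plan is to bootstrap the statement from Theorem~\ref{on side for porjecyives}, Lemma~\ref{cor is projective}, and Proposition~\ref{direct categories}, together with the elementary observation that when $\calI$ is locally trivial the two cokernel functors $\C_{\calP_j}$ and $\C_j$ are matched by the canonical equivalence $(\scrD \circ \iota_{\calP_j})\lRep \cong \scrD_j$. For the inclusion $\Prj{\scrD \lRep} \subseteq \Phi(\sf Proj_{\bullet})$, I would take $P$ projective in $\scrD\lRep$ and argue as follows. By Theorem~\ref{on side for porjecyives}(a), $\varphi_j^P$ is a monomorphism for every $j \in \Ob(\calI)$. Moreover, since $\scrD$ is locally exact and $\Mor(\calI/\calP_j) = \AU j$, the induced diagram $\scrD \circ \iota_{\calP_j}$ is exact; as $\scrD_j$ satisfies $\sf{AB4}^*$ and $\scrD_\gamma$ preserves small coproducts for $\gamma \in \AU j$, the second assertion of Lemma~\ref{cor is projective} yields $\C_j(P) \in \Prj{\scrD_j}$. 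By Definition~\ref{s and phi} these two facts together say precisely that $P \in \Phi(\sf Proj_{\bullet})$.

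For the reverse inclusion under the extra hypothesis that $\calI$ is a direct category, I would first apply Proposition~\ref{direct categories} to conclude that $\calI$ is locally trivial and left rooted. Local triviality forces $\AU j = \{e_j\}$, so $\calI/\calP_j$ consists of a single object together with its identity, and the canonical equivalence $(\scrD \circ \iota_{\calP_j})\lRep \cong \scrD_j$ carries $\C_{\calP_j}(H)$ to $\C_j(H)$ (cf.\ the remark following Corollary~\ref{left adjoint of l stalk}). Being an equivalence of abelian categories, it preserves and reflects projective objects, so for $H \in \Phi(\sf Proj_{\bullet})$ condition~(b) of Theorem~\ref{on side for porjecyives}, namely projectivity of $\C_{\calP_j}(H)$ in $(\scrD \circ \iota_{\calP_j})\lRep$, is equivalent to $\C_j(H) \in \Prj{\scrD_j}$, which holds by assumption, while condition~(a) is built into the definition of $\Phi$. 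Since $\calI$ is left rooted, the converse half of Theorem~\ref{on side for porjecyives} then gives that $H$ is projective in $\scrD \lRep$, completing the argument.

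I do not expect a serious obstacle here: the proof is essentially an assembly of results already established. The one point deserving care---and the reason the two categories coincide only when $\calI$ is direct rather than merely left rooted---is the step identifying projectivity of $\C_{\calP_j}(H)$ in $(\scrD \circ \iota_{\calP_j})\lRep$ with projectivity of $\C_j(H)$ in $\scrD_j$. This uses local triviality essentially: when $\AU j$ is nontrivial, a projective object of $\scrD_j$ need not be projective as a representation over $\scrD \circ \iota_{\calP_j}$ (the cyclic group of order $2$ over a field of characteristic $2$ already exhibits this), and then only the first inclusion survives. I would therefore be careful in the write-up to indicate exactly where local triviality enters, and to stress that the forward inclusion instead needs the exactness of the stalk functor $\sta^j$---hence the hypotheses $\sf{AB4}^*$ and local exactness---rather than this identification.
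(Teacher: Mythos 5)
Your proposal is correct and follows essentially the same route as the paper: the forward inclusion combines condition (a) of Theorem~\ref{on side for porjecyives} with the second assertion of Lemma~\ref{cor is projective} (whose exactness hypothesis on $\scrD \circ \iota_{\calP_j}$ is supplied by local exactness), and the reverse inclusion reduces, via local triviality of a direct category, to the identifications $\calI/\calP_j = \calI_j$, $(\scrD \circ \iota_{\calP_j})\lRep \cong \scrD_j$ and $\C_{\calP_j} = \C_j$, after which the left-rooted converse of Theorem~\ref{on side for porjecyives} applies. Your closing remarks on where local triviality is essential match the paper's own cautionary example following that theorem.
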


\begin{proof}
Note that $\scrD \circ \iota_{\calP_i}$ is exact for every $i \in \Ob(\calI)$ as $\scrD$ is locally exact by assumption. By replacing $\C_{\calP_i}$ with $\C_i$, a similar version of statement (b) in the previous theorem holds by Lemma \ref{cor is projective}. Now the same proof in the previous theorem shows that every projective object in $\scrD \lRep$ also satisfies statement (a). This establishes the first conclusion. The second one is trivial as in this case $\calI/\calP_i = \calI_i$ which implies that $(\scrD \circ \iota_{\calP_i}) \lRep \cong \scrD_i$, $\C_i = \C_{\calP_i}$ and $\fre^i = (\iota_{\calP_i})_!$.
\end{proof}

Suppose that $\scrD$ admits enough right adjoints, and denote by $\scrD_\alpha^*$ the right adjoint of $\scrD_\alpha$ for $\alpha \in \Mor(\calI)$. We give characterizations of injective objects in $\scrD \lRep$, whose proof is dual to that of Theorem \ref{on side for porjecyives}.

\begin{thm} \label{on side for injectives}
Suppose that $\scrD$ admits enough right adjoints and $\scrD_i$ satisfies the axiom $\sf{AB4}^*$ for $i \in \Ob(\calI)$. An object $I$ in $\scrD \lRep$ is injective only if the following conditions hold for every $j \in \Ob(\calI)$:
\begin{prt}
\item $\psi_j^I: I_j \to \lim_{\sigma \in \calP_j(j, \bullet)} \scrD_\sigma^*(I_{t(\sigma)})$ is an epimorphism and
\item $\K_{\calP_j}(I)$ is an injective object in $(\scrD_j \circ \iota_{\calP_j}) \lRep$.
\end{prt}
If further $\calI$ is right rooted, then the converse is true.
\end{thm}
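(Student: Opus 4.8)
The plan is to dualize, essentially line by line, the proof of Theorem~\ref{on side for porjecyives}: replace colimits by limits, coproducts by products, left adjoints by right adjoints, projective objects by injective objects, and the left rooted structure on $\calI$ by the left rooted structure on $\calI\op$. First I record the tools supplied by the two extra hypotheses. Since each $\scrD_\alpha$ has a right adjoint it preserves small coproducts, so $\scrD\lRep$ is Grothendieck by Theorem~\ref{local grenthe induce local grenthen}; and since $\scrD$ admits enough right adjoints and each $\scrD_i$ is complete (hence satisfies $\sf{AB3}^*$), Remark~\ref{coinduction} furnishes a right adjoint $G_\ast$, the coinduction functor, for the restriction functor $G^\ast$ of any functor $G$ of index categories. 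Applied to $\iota_i\colon\calI_i\hookrightarrow\calI$ this yields the \emph{cofree} functor $\mathsf{cof}^i\colon\scrD_i\to\scrD\lRep$, the right adjoint of $\eva^i$, with the componentwise description $(\mathsf{cof}^j(E_j))_i=\prod_{\theta\in\Hom[\calI]{i}{j}}\scrD_\theta^*(E_j)$ dual to that of $\fre_j$; applied to $\iota_{\calP_i}$ it yields the right adjoint $(\iota_{\calP_i})_\ast$ of $\iota_{\calP_i}^*$. I write $\Psi(\scrD)$ for the class of $I\in\scrD\lRep$ with $\psi_i^I$ an epimorphism for every $i\in\Ob(\calI)$, the dual of the class $\Phi(\scrD)$ of Definition~\ref{s and phi}.

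\emph{Necessity.} For (b): by Theorem~\ref{ker adjoint pair} the pair $(\lif^{\calP_j},\K_{\calP_j})$ is adjoint and $\lif^{\calP_j}$ is exact by Lemma~\ref{The lift functor}; a right adjoint of an exact functor preserves injectives, so $\K_{\calP_j}(I)$ is injective in $(\scrD\circ\iota_{\calP_j})\lRep$ when $I$ is injective. This is dual to the first half of Lemma~\ref{cor is projective}. For (a): dual to Lemma~\ref{lem3.15}, one checks that $\mathsf{cof}^j(E_j)\in\Psi(\scrD)$ for every $j$ and every object $E_j$ of $\scrD_j$, using the formula above and the fact that right adjoints preserve products; dual to Lemma~\ref{phi sum and summand}, $\Psi(\scrD)$ is closed under direct summands and small products --- this is where the axiom $\sf{AB4}^*$ enters, to ensure that a product of epimorphisms in each $\scrD_i$ is an epimorphism. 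Finally, dual to Remark~\ref{generator for R-represetations}, the adjunction unit exhibits every injective $I$ as a direct summand of $\prod_{j\in\Ob(\calI)}\mathsf{cof}^j(I_j)$; hence $I\in\Psi(\scrD)$, i.e.\ (a) holds.

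\emph{Sufficiency.} Assume $\calI$ is right rooted and $I$ satisfies (a) and (b). Apply the construction~\ref{construct} to $\calI\op$ to get an ordinal $\zeta$ and a chain $V_1\subseteq V_2\subseteq\cdots$ of subsets of $\Ob(\calI)$ with $V_\zeta=\Ob(\calI)$, where $V_1$ is the set of $\preccurlyeq$-maximal objects of $\calI$ and $V_{\chi+1}\setminus V_\chi$ the set of maximal objects of $\Ob(\calI)\setminus V_\chi$. Dualizing the transfinite construction in the proof of Theorem~\ref{on side for porjecyives}, define injective objects $I^\chi$ for $\chi\le\zeta$ by $I^0=0$,
\[
I^\chi=\prod_{i\in V_\chi\setminus V_{\chi-1}}(\iota_{\calP_i})_\ast\bigl(\K_{\calP_i}(I)\bigr)\ \ (\chi\text{ a successor}),\qquad I^\lambda=\prod_{\kappa<\lambda}I^\kappa\ \ (\lambda\text{ a limit}),
\]
and $I^\zeta=\prod_{\chi<\zeta}I^\chi$; each factor $(\iota_{\calP_i})_\ast(\K_{\calP_i}(I))$ is injective because $\K_{\calP_i}(I)$ is injective by (b) and $(\iota_{\calP_i})_\ast$, being right adjoint to the exact functor $\iota_{\calP_i}^*$, preserves injectives, and products of injectives are injective. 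It remains to construct an isomorphism $I\cong I^\zeta$; this is carried out componentwise, by transfinite induction along $\{V_\chi\}$, using the splitting
\[
I_i\ \cong\ \Bigl(\lim_{\sigma\in\calP_i(i,\bullet)}\scrD_\sigma^*(I_{t(\sigma)})\Bigr)\times\K_{\calP_i}(I)_i
\]
available because $\psi_i^I$ is an epimorphism by (a) and the associated short exact sequence splits, dually to the splitting used at the end of the proof of Theorem~\ref{on side for porjecyives}. Since each $I^\chi$ is injective, so is $I\cong I^\zeta$.

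\emph{Main obstacle.} The necessity part is a mechanical dualization. The real work is in the sufficiency part: making the transfinite construction precise and verifying that the canonical comparison $I\to I^\zeta$ is an isomorphism at every object. Two points need care --- (i) the splitting of the defining short exact sequence of $\K_{\calP_i}$ must be chosen compatibly as $i$ ranges over a single level $V_\chi\setminus V_{\chi-1}$, so that the partial products glue across the ordinal filtration; and (ii) one must track the support of $(\iota_{\calP_i})_\ast(\K_{\calP_i}(I))$, which is nonzero at $j$ precisely when $j\preccurlyeq i$, so that the iterated splittings recover every component $I_j$. These are exactly the dual difficulties to the ones hidden in the last paragraph of the proof of Theorem~\ref{on side for porjecyives}.
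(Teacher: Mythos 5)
Your proposal is correct and follows exactly the route the paper intends: the paper's entire proof of this theorem is the remark that it is dual to Theorem~\ref{on side for porjecyives}, and you carry out that dualization faithfully, correctly locating where $\sf{AB4}^*$ enters (products of epimorphisms, exactness of $\ran$/coinduction) and where right-rootedness is used (the transfinite filtration on $\calI\op$). The points you flag as needing care in the sufficiency step are precisely the ones the paper itself leaves as ``routine'' in the projective case.
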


\begin{dfn} \label{s and psi}
Let $\calY = \{\calY_i\}_{i \in \Ob(\calI)}$ with each member a subcategory of $\scrD_i$. Suppose that $\scrD$ admits enough right adjoints. Define
\[
\Psi(\calY) = \{ Y \in {\scrD \lRep} \mid {\psi_i^Y} \text{ is an epimorphism and } \K_i(Y) \in {\calY}_i \text{ for } i \in \Ob(\calI) \},
\]
where $\psi_i^Y: Y_i \to \lim_{\sigma \in \calP_i(i, \bullet)} \scrD_\sigma^*(Y_{t(\sigma)})$ is given in (\ref{psi}) and $\K_i(Y)=\ker(\psi_i^Y)$.
\end{dfn}

Denote by $\sf Inj_{\bullet}$ the family $\{\Inj{\scrD_i}\}_{i \in \Ob(\calI)}$. The dual result of Corollary \ref{proj for direct categories} is:

\begin{cor} \label{inj for direct categories}
Suppose that $\scrD$ is locally exact, admits enough right adjoints and $\scrD_i$ satisfies the axiom $\sf{AB4}^*$ for $i \in \Ob(\calI)$. Then
\[
\Inj{\scrD \lRep} \subseteq \Psi(\sf Inj_{\bullet}).
\]
If further $\calI$ is an inverse category, then these two categories coincide.
\end{cor}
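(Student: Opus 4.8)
The plan is to carry out the proof of Corollary~\ref{proj for direct categories} in dual form, replacing the adjoint pairs $(\C_{\calP_i},\lif^{\calP_i})$ and $(\C_i,\sta^i)$ used there by the pairs $(\lif^{\calP_i},\K_{\calP_i})$ of Theorem~\ref{ker adjoint pair} and $(\sta_i,\K_i)$ of Corollary~\ref{right adjoint of l stalk}, and invoking Theorem~\ref{on side for injectives} in place of Theorem~\ref{on side for porjecyives}.

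First I would prove the inclusion $\Inj{\scrD \lRep}\subseteq\Psi(\sf Inj_{\bullet})$, which holds for an arbitrary partially ordered category $\calI$. Let $I$ be an injective object in $\scrD \lRep$. Since $\scrD$ admits enough right adjoints and each $\scrD_i$ satisfies $\sf{AB4}^*$, Theorem~\ref{on side for injectives} applies and gives, for every $j\in\Ob(\calI)$, that $\psi_j^I$ is an epimorphism and that $\K_{\calP_j}(I)$ is injective in $(\scrD\circ\iota_{\calP_j})\lRep$. The first clause is precisely the epimorphism condition in Definition~\ref{s and psi}, so it remains to check $\K_j(I)\in\Inj{\scrD_j}$. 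This is the injective analogue of the second assertion of Lemma~\ref{cor is projective}: because $\scrD$ is locally exact, $\scrD\circ\iota_{\calP_j}$ is an exact diagram and each $\scrD_\gamma$ with $\gamma\in\AU j$ is exact; since $\scrD_j$ is Grothendieck it satisfies $\sf{AB4}$, so small coproducts in $\scrD_j$ are exact and hence the induction functor $(\jmath_j)_!\colon(\scrD\circ\iota_j)\lRep\to(\scrD\circ\iota_{\calP_j})\lRep$, which sends $K$ to $\coprod_{\gamma\in\AU j}\scrD_\gamma(K)$, is exact. Therefore $\sta_j=\lif^{\calP_j}\circ(\jmath_j)_!$ is exact, so its right adjoint $\K_j$ (Corollary~\ref{right adjoint of l stalk}) preserves injective objects, whence $\K_j(I)$ is injective in $\scrD_j$ and $I\in\Psi(\sf Inj_{\bullet})$.

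Next I would prove the reverse inclusion under the extra hypothesis that $\calI$ is an inverse category. By Proposition~\ref{direct categories}, $\calI\op$ is locally trivial and left rooted, so $\calI$ is locally trivial and right rooted. Local triviality forces $\AU j=\{e_j\}$, hence $\calI/\calP_j=\calI_j$, and the canonical equivalence $(\scrD\circ\iota_{\calP_j})\lRep\cong\scrD_j$ identifies $\K_{\calP_j}$ with $\K_j$. Now let $I\in\Psi(\sf Inj_{\bullet})$: for every $j$ the morphism $\psi_j^I$ is an epimorphism and $\K_{\calP_j}(I)\cong\K_j(I)$ is injective in $(\scrD\circ\iota_{\calP_j})\lRep$, so, $\calI$ being right rooted, the converse half of Theorem~\ref{on side for injectives} forces $I$ to be injective in $\scrD \lRep$. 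Together with the first inclusion this yields the asserted equality $\Inj{\scrD \lRep}=\Psi(\sf Inj_{\bullet})$.

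The step I expect to be the main obstacle is the exactness of $\sta_j$, equivalently the fact that $\K_j$ preserves injectives: this is the one point where the hypotheses have to be balanced with care. Local exactness controls the functors attached to the endomorphisms of $j$, whereas exactness of the coproducts entering $(\jmath_j)_!$ comes from the standing assumption that each $\scrD_j$ is a Grothendieck category, and not from $\sf{AB4}^*$ --- the latter being what Theorem~\ref{on side for injectives} consumes. All remaining content is a mechanical dualization of the proof of Corollary~\ref{proj for direct categories}, together with the observation that inverse categories are exactly the locally trivial right rooted ones.
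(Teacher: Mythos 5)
Your argument is correct and is exactly the dualization the paper intends: the inclusion follows from Theorem \ref{on side for injectives} together with the injective analogue of Lemma \ref{cor is projective} (using that $\sta_j=\lif^{\calP_j}\circ(\jmath_j)_!$ is exact because local exactness handles the $\scrD_\gamma$ and $\sf{AB4}$, automatic for Grothendieck categories, handles the coproduct, so its right adjoint $\K_j$ preserves injectives), and the equality for inverse categories follows since local triviality gives $\calI/\calP_j=\calI_j$ and $\K_{\calP_j}=\K_j$, so the right-rooted converse of Theorem \ref{on side for injectives} applies. Your closing remark correctly identifies the one place where the hypotheses enter asymmetrically compared with the projective case.
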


\begin{rmk}
For the trivial diagram indexed by the free category associated to a left rooted quiver, Corollary \ref{proj for direct categories} was proved in \cite[Theorem 3.1]{EE05}. For the trivial diagram indexed by the free category associated to a right rooted quiver, Corollary \ref{inj for direct categories} was proved in \cite[Theorem 4.2]{EER09}.
\end{rmk}

\begin{rmk} \label{another explain for pro in}
By the proofs of Theorem \ref{on side for porjecyives} and Corollary \ref{proj for direct categories}, if $\calI$ is a direct category, then every projective object $P$ in $\scrD \lRep$ is isomorphic to $\coprod_{i \in \Ob(\calI)} \fre_i(\C_i(P))$. Dually, if $\calI$ is an inverse category, then every injective object $I$ in $\scrD \lRep$ is isomorphic to $\prod_{i \in \Ob(\calI)} \fre_i (\K_i(I))$.
\end{rmk}

\appendix
\section*{Appendix. Modules over presheaves of associative rings}
\stepcounter{section}

\noindent
Estrada and Virili introduced in \cite{SS2017} the notions of representations $R$ of small categories $\calI$ on $\Cat$ (the 2-category of small preadditive categories) and modules over $R$, and established a few fundamental homological facts on the category of these modules. We observe that a representation $R$ of $\calI$ on $\Cat$ in that paper is precisely an $\calI$-diagram of small preadditive categories. In this Appendix, we explore relations between modules over $R$ in their sense and representations over diagrams in our sense. We show that an $\calI$-diagram $R$ of associative rings induces a right exact $\calI$-diagram $\overline{\scrR}$ of module categories such that the category $R\lMod$ of left $R$-modules in \cite{SS2017} coincides with the category $\overline{\scrR} \lRep$. We mention that this result can be extended to $\calI$-diagrams of small preadditive categories (or a representation of $\calI$ on $\Cat$). Therefore, the work described in \cite{SS2017} also falls into our framework.

The following definition is taken from \dfncite[3.6]{SS2017}.

\begin{dfn}\label{Estrada and Virili}
Let $(R, \eta, \tau)$ be an $\calI$-diagram of associative rings. A left $R$-module $M$ consists of the following data:
\begin{itemize}
\item for $i \in \Ob(\calI)$, a left $R_i$-module $M_i$, and
\item for $\alpha: i \to j \in \Mor(\calI)$, a morphism $M_\alpha^*: M_i \to (R_\alpha)^*(M_j)$ in $R_i \lMod$, where $(R_\alpha)^* = \Hom[R_j]{R_j}{-}$ is the restriction of scalars along $R_\alpha$,
\end{itemize}
such that the following axioms are satisfied:

\noindent ({Mod.1}) Given composable morphisms $i \overset{\alpha} \to j \overset{\beta} \to k \in \Mor(\calI)$, there is a commutative diagram
\[
\xymatrix{
M_i \ar[rr]^-{M_{\beta\alpha}^*} \ar[d]_{M_\alpha^*} & & (R_{\beta\alpha})^*(M_k) \ar[d]^{\id_{M_k} \ast \tau_{\beta,\alpha}}\\
(R_{\alpha})^*( M_j) \ar[rr]^-{(R_\alpha)^*(M_\beta^*)} & & (R_\alpha)^*((R_\beta)^*(M_k))
}
\]
in $R_i \lMod$.

\noindent ({Mod.2}) For $i \in \Ob(\calI)$, there exists a commutative diagram
\[
\xymatrix{
M_i \ar[rr]^-{\id_{M_i}} \ar[dr]_{M_{e_i}^*} & & M_i\\
 & (R_{e_i})^*(M_i) \ar[ur]_{\id_{M_i} \ast \eta_i}
}
\]
in $R_i \lMod$.

A morphism $\omega: M \to M'$ between two left $R$-modules is a family $\{ \omega_i: M_i \to M'_i\}_{i \in \Ob(\calI)}$ of morphisms such that the diagram
\[
\xymatrix{
M_i \ar[rr]^-{\omega_i} \ar[d]_{M_\alpha^*} & & M'_i \ar[d]^{{M'}_\alpha^*}\\
(R_\alpha)^*(M_j) \ar[rr]^-{(R_\alpha)^*(\omega_j)} & & (R_\alpha)^*(M'_j)
}
\]
in $R_i \lMod$ commutes for any $\alpha: i \to j \in \Mor(\calI)$.
Denote by $R \lMod$ the category of left $R$-modules.
\end{dfn}

Now we construct the right exact $\calI$-diagram ($\overline{\scrR}$, $\overline{\eta}$, $\overline{\tau}$) as follows:
\begin{itemize}
\item for $i \in \Ob(\calI)$, set $\overline{\scrR}_i = R_i \lMod$, the category of left $R_i$-modules;

\item for $\alpha: i \to j \in \Mor(\calI)$, set $\overline{\scrR}_\alpha: R_i \lMod \to R_j \lMod$ to be $R_j \otimes_{R_i} -$;

\item for $i \in \Ob(\calI)$, set $\overline{\eta}_i: \id_{R_i \lMod} \to \overline{\scrR}_{e_i} = R_i \otimes_{R_i} -$ to be the classical natural isomorphism defined by the isomorphism $M \cra R_i \otimes_{R_i} M$ for any left $R_i$-module $M$;

\item for composable morphisms $\alpha: i \to j$ and $\beta: j \to k$ in $\Mor(\calI)$, set $\overline{\tau}_{\beta,\alpha}: \overline{\scrR}_\beta \circ \overline{\scrR}_\alpha \to \overline{\scrR}_{\beta\alpha}$ to be the composite of the following classical natural isomorphisms
\[
R_k \otimes_{R_j} (R_j \otimes_{R_i} -) \cra (R_k \otimes_{R_j} R_j) \otimes_{R_i} - \cra R_k \otimes_{R_i} -.
\]
\end{itemize}
It is routine to show that ($\overline{\scrR}$, $\overline{\eta}$, $\overline{\tau}$) is an $\calI$-diagram of module categories and it is clear that it is right exact. Furthermore, we have:

\begin{thm} \label{EV them}
The category $R \lMod$ coincides with $\overline{\scrR} \lRep$.
\end{thm}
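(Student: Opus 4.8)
The plan is to exhibit an isomorphism of categories $\Theta\colon R\lMod \to \overline{\scrR}\lRep$ that is literally the identity on the underlying object assignments $i \mapsto M_i$ and that transports structural morphisms through the extension/restriction-of-scalars adjunction. The point is that in both \dfnref{Estrada and Virili} and the definition of $\overline{\scrR}$ the datum at an object $i$ is exactly a left $R_i$-module $M_i$, so there is nothing to check there; only the structural morphisms and the two axioms need to be matched.

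First I would record the adjunction that does all the work: for a ring homomorphism $R_\alpha\colon R_i \to R_j$ the pair $(\overline{\scrR}_\alpha,(R_\alpha)^*) = (R_j\otimes_{R_i}-,\,\Hom[R_j]{R_j}{-})$ is an adjoint pair, with natural bijection $\Phi_\alpha\colon \Hom[R_j]{R_j\otimes_{R_i}M_i}{M_j} \to \Hom[R_i]{M_i}{(R_\alpha)^*(M_j)}$ given by precomposition with the unit $x\mapsto 1\otimes x$. Then, given a representation $M\in\overline{\scrR}\lRep$ with structural morphisms $M_\alpha\colon R_j\otimes_{R_i}M_i\to M_j$, I set $M_\alpha^* := \Phi_\alpha(M_\alpha)$, and conversely $M_\alpha := \Phi_\alpha^{-1}(M_\alpha^*)$; this is clearly a bijection on the level of ``the family of structural maps''. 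What remains is to verify that (Rep.1) $\Leftrightarrow$ (Mod.1), (Rep.2) $\Leftrightarrow$ (Mod.2), and that the morphism conditions in the two categories correspond.

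The morphism part is a short naturality argument: a family $\{\omega_i\}$ makes the (Rep)-square with sides $M_\alpha,\,M'_\alpha,\,\overline{\scrR}_\alpha(\omega_i),\,\omega_j$ commute iff, after applying $\Phi_\alpha$ and using naturality of the adjunction in both variables, the (Mod)-square with sides $M_\alpha^*,\,{M'}_\alpha^*,\,\omega_i,\,(R_\alpha)^*(\omega_j)$ commutes; since the two conditions are imposed on the same underlying families, $\Theta$ is fully faithful. For (Rep.2) $\Leftrightarrow$ (Mod.2) one unwinds $\Phi_{e_i}$ against the canonical isomorphism $\overline{\eta}_i\colon M \cra R_i\otimes_{R_i}M$; this is routine. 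For (Rep.1) $\Leftrightarrow$ (Mod.1) one must unwind $\Phi$ along a composite $R_i\to R_j\to R_k$: the key identity is that $\Phi_{\beta\alpha}$ factors as the composite of $\Phi_\alpha$ with $(R_\alpha)^*(\Phi_\beta(-))$ once one identifies $(R_{\beta\alpha})^*(M_k)$ with $(R_\alpha)^*((R_\beta)^*(M_k))$, and that this identification is precisely the one dual (under the adjunctions) to $\overline{\tau}_{\beta,\alpha}\colon R_k\otimes_{R_j}(R_j\otimes_{R_i}M_i)\cra R_k\otimes_{R_i}M_i$. Granting this, a diagram chase converts the square expressing $M_{\beta\alpha}\circ\overline{\tau}_{\beta,\alpha}(M_i) = M_\beta\circ\overline{\scrR}_\beta(M_\alpha)$ into the square of (Mod.1), and conversely.

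The hard part will be exactly that coherence step in (Rep.1) $\Leftrightarrow$ (Mod.1): checking that the ``classical natural isomorphisms'' (associativity and unitality of $\otimes$) defining $\overline{\tau}$ and $\overline{\eta}$ are matched, under the adjunctions $\Phi_\alpha$, with the comparison isomorphisms for restriction of scalars that are written ``$\id_{M_k}\ast\tau_{\beta,\alpha}$'' and ``$\id_{M_i}\ast\eta_i$'' in \dfnref{Estrada and Virili}. Conceptually this is the statement that extension of scalars, as a pseudofunctor $\calI\to\CAT$, is objectwise left adjoint to restriction of scalars as a pseudofunctor $\calI\op\to\CAT$, compatibly with the coherence $2$-cells; it is standard $2$-categorical bookkeeping but needs to be written out carefully. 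Once it is in place, $\Theta$ is an identity-on-objects, bijective-on-morphisms functor whose inverse is constructed the same way using $\Phi_\alpha^{-1}$, so $R\lMod$ and $\overline{\scrR}\lRep$ coincide.
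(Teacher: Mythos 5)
Your proposal is correct and follows essentially the same route as the paper: the paper's proof likewise passes the structural morphisms $M_\alpha^*$ through the adjunction $((R_\alpha)_!,(R_\alpha)^*)$ to obtain $M_\alpha$ and then checks, via the Yoneda Lemma, that the axioms and morphism conditions correspond. Your write-up is in fact more explicit than the paper's about the coherence bookkeeping for $\overline{\tau}$ and $\overline{\eta}$, which the paper dismisses as routine.
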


\begin{prf*}
For any $\alpha: i \to j \in \Mor(\calI)$, note that $((R_\alpha)_!, (R_\alpha)^*)$ is an adjoint pair, where $(R_\alpha)_! = R_j \otimes_{R_i} -$. Let $M_\alpha : (R_\alpha)_!(M_i) \to M_j$ be the adjoint morphism of $M_\alpha^* :M_i \to (R_\alpha)^*(M_j)$. Then it is routine to check by Yoneda Lemma that a left $R$-module $M$ is precisely an object in $\overline{\scrR} \lRep$. Furthermore, by Yoneda Lemma again, a morphism between two left $R$-modules is precisely a morphism in $\overline{\scrR} \lRep$.
\end{prf*}

By \thmcite[3.18]{SS2017}, the category $R\lMod$ is a Grothendieck category. If further $\calI$ is a poset, then $R \lMod$ has a projective generator. According to Theorem \ref{EV them}, $R \lMod$ coincides with the category $\overline{\scrR} \lRep$. Applying Theorem \ref{local grenthe induce local grenthen} and Proposition \ref{local induce local} to this special case, we obtain the following result, which improves \thmcite[3.18]{SS2017} by dropping the unessential condition that $\calI$ is a poset.

\begin{cor} \label{right module is grethendieck}
Let $R$ be an $\calI$-diagram of associative rings. Then $R \lMod$ is a locally finitely presented Grothendieck category with a projective generator.
\end{cor}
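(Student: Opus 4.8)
The plan is to transport the statement to the language of representations over diagrams and then invoke the amalgamation results of Section~\ref{Sec: The evaluation functor and its adjunctions}. By Theorem~\ref{EV them} the category $R\lMod$ \emph{is} the category $\overline{\scrR}\lRep$, where $\overline{\scrR}$ is the $\calI$-diagram of module categories with $\overline{\scrR}_i = R_i\lMod$ for $i \in \Ob(\calI)$ and $\overline{\scrR}_\alpha = R_j \otimes_{R_i} -$ for $\alpha\colon i \to j$ in $\Mor(\calI)$. So it suffices to check that $\overline{\scrR}$ satisfies the hypotheses needed to apply Theorem~\ref{local grenthe induce local grenthen} and Proposition~\ref{local induce local}, and then to translate their conclusions back.

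First I would record the relevant properties of the components. Each $\overline{\scrR}_i = R_i\lMod$ is a Grothendieck category; it is locally finitely presented, the finitely presented modules forming a skeletally small generating class; and it has a set of projective generators, indeed the single projective generator $R_i$. On the morphism side each $\overline{\scrR}_\alpha = R_j \otimes_{R_i} -$ is a left adjoint (its right adjoint being restriction of scalars along $R_\alpha$), hence it is right exact and preserves all small colimits, in particular small coproducts. In particular $\overline{\scrR}$ is a right exact $\calI$-diagram of abelian categories, so $\overline{\scrR}\lRep$ is abelian by Proposition~\ref{Rep is abelian}.

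Next I would feed these facts into the two amalgamation statements. By Theorem~\ref{local grenthe induce local grenthen}, since every $\overline{\scrR}_i$ is a Grothendieck category with a set of projective generators and every $\overline{\scrR}_\alpha$ preserves small coproducts, $\overline{\scrR}\lRep$ is a Grothendieck category with a set of projective generators; and since a Grothendieck category is cocomplete, the coproduct of such a set is a single projective generator. Likewise, by Proposition~\ref{local induce local}, since every $\overline{\scrR}_i$ is locally finitely presented and every $\overline{\scrR}_\alpha$ preserves small coproducts, $\overline{\scrR}\lRep$ is locally finitely presented. Combining these and using Theorem~\ref{EV them} yields that $R\lMod$ is a locally finitely presented Grothendieck category with a projective generator.

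I do not expect any serious obstacle here: the structural work has already been carried out in Section~\ref{Sec: The evaluation functor and its adjunctions}, and the only points requiring (routine) care are verifying that $R_j \otimes_{R_i} -$ is right exact and preserves small coproducts — both immediate from it being a left adjoint — and the small observation that in a cocomplete category a set of projective generators can always be collapsed to a single one by forming their coproduct.
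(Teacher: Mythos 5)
Your proposal is correct and follows essentially the same route as the paper: identify $R\lMod$ with $\overline{\scrR}\lRep$ via Theorem~\ref{EV them}, observe that each $\overline{\scrR}_i=R_i\lMod$ is a locally finitely presented Grothendieck category with projective generator $R_i$ and that each $\overline{\scrR}_\alpha=R_j\otimes_{R_i}-$ preserves small coproducts as a left adjoint, and then apply Theorem~\ref{local grenthe induce local grenthen} and Proposition~\ref{local induce local}. Your explicit verification of the hypotheses and the remark that a set of projective generators collapses to a single one via coproducts are both sound and merely make explicit what the paper leaves implicit.
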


\begin{rmk}\label{underline R}
An $\calI$-diagram $R$ of associative rings induces also a right exact $\calI\op$-diagram $(\widetilde{\scrR}, \widetilde{\eta}, \widetilde{\tau})$ of module categories as follows:
\begin{itemize}
\item for $i \in \Ob(\calI \op)$, $\widetilde{\scrR}_i = \rMod R_i$, the category of right $R_i$-modules;

\item for $\alpha\op: j \to i \in \Mor(\calI \op)$, $\widetilde{\scrR}_{\alpha \op} = -\otimes_{R_j}R_j: \rMod R_j \to \rMod R_i$;

\item for $i \in \Ob(\calI \op)$, $\widetilde{\eta}_i: \id_{\rMod R_i} \to \widetilde{\scrR}_{e_i} = - \otimes_{R_i} R_i$ is the classical natural isomorphism defined by the isomorphism $N \cra N \otimes_{R_i} R_i$ for any right $R_i$-module $N$;

\item for any pair of composable morphisms $\beta\op : k \to j$ and $\alpha\op : j \to i$ in $\Mor(\calI\op)$,
\[
\widetilde{\tau}_{\alpha\op,\beta\op}: \widetilde{\scrR}_{\alpha \op} \circ \widetilde{\scrR}_{\beta \op} \to \widetilde{\scrR}_{\alpha \op\beta \op}
\]
is the composite of the following classical natural isomorphisms
\[
(- \otimes_{R_k} R_k) \otimes_{R_j} R_j \cra - \otimes_{R_k}(R_k \otimes_{R_j} R_j) \cra - \otimes_{R_k}R_k.
\]
\end{itemize}
It is clear that the $\calI\op$-diagram $\widetilde{\scrR}$ admits enough right adjoints.
Similar to Theorem \ref{EV them}, one can show that the category $\rMod R$ of right $R$-modules in the sense of \cite{SS2017} coincides with $\widetilde{\scrR} \lRep$.
\end{rmk}


\bibliographystyle{amsplain-nodash}


\begin{thebibliography}{10}

\bibitem{AdamekRosicky}
Ji{\v{r}}{\'{\i}} Ad{\'a}mek and Ji{\v{r}}{\'{\i}} Rosick{\'y}, \emph{Locally
  presentable and accessible categories}, London Mathematical Society Lecture
  Note Series, vol. 189, Cambridge University Press, Cambridge, 1994.
  \MR{MR1294136}

\bibitem{WCB94}
William Crawley-Boevey, \emph{Locally finitely presented additive categories},
  Comm. Algebra \textbf{22} (1994), no.~5, 1641--1674. \MR{MR1264733}

\bibitem{DR76}
Vlastimil Dlab and Claus~Michael Ringel, \emph{Indecomposable representations
  of graphs and algebras}, Mem. Amer. Math. Soc. \textbf{6} (1976), no.~173,
  v+57. \MR{MR447344}

\bibitem{EE05}
Edgar~E. Enochs and Sergio Estrada, \emph{Projective representations of
  quivers}, Comm. Algebra \textbf{33} (2005), no.~10, 3467--3478.
  \MR{MR2175445}

\bibitem{EER09}
Edgar~E. Enochs, Sergio Estrada, and Juan~Ramon Garc\'{\i}a~Rozas,
  \emph{Injective representations of infinite quivers. {A}pplications}, Canad.
  J. Math. \textbf{61} (2009), no.~2, 315--335. \MR{MR2504018}

\bibitem{EOT04}
Edgar~E. Enochs, Luis Oyonarte, and Blas Torrecillas, \emph{Flat covers and
  flat representations of quivers}, Comm. Algebra \textbf{32} (2004), no.~4,
  1319--1338. \MR{MR2100360}

\bibitem{Sergio2015}
Sergio Estrada, \emph{The derived category of quasi-coherent modules on an
  {A}rtin stack via model structures}, Int. Math. Res. Not. IMRN (2015),
  no.~15, 6411--6432. \MR{MR3384483}

\bibitem{SS2017}
Sergio Estrada and Simone Virili, \emph{Cartesian modules over representations
  of small categories}, Adv. Math. \textbf{310} (2017), 557--609.
  \MR{MR3620694}

\bibitem{Gabriel1973}
Peter Gabriel, \emph{Indecomposable representations. {II}}, Symposia
  {M}athematica, {V}ol. {XI} ({C}onvegno di {A}lgebra {C}ommutativa, {INDAM},
  {R}ome, 1971), 1973, pp.~81--104. \MR{MR0340377}

\bibitem{GKKP20}
Nan Gao, Julian K\"{u}lshammer, Sondre Kvamme, and Chrysostomos Psaroudakis,
  \emph{A functorial approach to monomorphism categories for species {I}},
  Commun. Contemp. Math. \textbf{24} (2022), no.~6, Paper No. 2150069, 55.
  \MR{MR4466168}

\bibitem{GLS17}
Christof Geiss, Bernard Leclerc, and Jan Schr\"{o}er, \emph{Quivers with
  relations for symmetrizable {C}artan matrices {I}: {F}oundations}, Invent.
  Math. \textbf{209} (2017), no.~1, 61--158. \MR{MR3660306}

\bibitem{GK05}
Peter~B. Gothen and Alastair~D. King, \emph{Homological algebra of twisted
  quiver bundles}, J. London Math. Soc. (2) \textbf{71} (2005), no.~1, 85--99.
  \MR{MR2108248}

\bibitem{G82}
Edward~L. Green, \emph{On the representation theory of rings in matrix form},
  Pacific J. Math. \textbf{100} (1982), no.~1, 123--138. \MR{661444}

\bibitem{G64}
Alexandre Grothendieck and Mich\`{e}le Raynaud, \emph{Rev\^{e}tements
  \'{e}tales et groupe fondamental}, Lecture Notes in Mathematics, Vol. 224,
  Springer-Verlag, Berlin-New York, 1971, S\'{e}minaire de G\'{e}om\'{e}trie
  Alg\'{e}brique du Bois Marie 1960--1961 (SGA 1), Dirig\'{e} par Alexandre
  Grothendieck. Augment\'{e} de deux expos\'{e}s de M. Raynaud. \MR{MR0354651}

\bibitem{Ho99}
Mark Hovey, \emph{Model categories}, Mathematical Surveys and Monographs,
  vol.~63, American Mathematical Society, Providence, RI, 1999. \MR{MR1650134}

\bibitem{HR08}
Thomas H\"{u}ttemann and Oliver R\"{ondigs}, \emph{Twisted diagrams and
  homotopy sheaves}, preprint \textbf{\arxiv[AT]{0805.4076}}.

\bibitem{Joh02}
Peter~T. Johnstone, \emph{Sketches of an elephant: a topos theory compendium.
  {V}ol. 1}, Oxford Logic Guides, vol.~43, The Clarendon Press, Oxford
  University Press, New York, 2002. \MR{MR1953060}

\bibitem{MM92}
Saunders Mac~Lane and Ieke Moerdijk, \emph{Sheaves in geometry and logic},
  Universitext, Springer-Verlag, New York, 1994, A first introduction to topos
  theory, Corrected reprint of the 1992 edition. \MR{MR1300636}

\bibitem{MP89}
Michael Makkai and Robert Par\'{e}, \emph{Accessible categories: the
  foundations of categorical model theory}, Contemporary Mathematics, vol. 104,
  American Mathematical Society, Providence, RI, 1989. \MR{MR1031717}

\bibitem{Mozgovoy20}
Sergey Mozgovoy, \emph{Quiver representations in abelian categories}, J.
  Algebra \textbf{541} (2020), 35--50. \MR{MR4012211}

\bibitem{Oberst70}
Ulrich Oberst and Helmut R\"{o}hrl, \emph{Flat and coherent functors}, J.
  Algebra \textbf{14} (1970), 91--105. \MR{MR257181}

\bibitem{Olsson07}
Martin Olsson, \emph{Sheaves on {A}rtin stacks}, J. Reine Angew. Math.
  \textbf{603} (2007), 55--112. \MR{MR2312554}

\bibitem{stein}
Itamar Stein, \emph{Representation theory of order-related monoids of partial
  functions as locally trivial category algebras}, Algebr. Represent. Theory
  \textbf{23} (2020), no.~4, 1543--1567. \MR{MR4125591}

\bibitem{St72}
Ross Street, \emph{Two constructions on lax functors}, Cahiers Topologie
  G\'{e}om. Diff\'{e}rentielle \textbf{13} (1972), 217--264. \MR{MR347936}

\end{thebibliography}

\def\cprime{$'$}
  \providecommand{\arxiv}[2][AC]{\mbox{\href{http://arxiv.org/abs/#2}{\sf
  arXiv:#2 [math.#1]}}}
  \providecommand{\oldarxiv}[2][AC]{\mbox{\href{http://arxiv.org/abs/math/#2}{\sf
  arXiv:math/#2
  [math.#1]}}}\providecommand{\MR}[1]{\mbox{\href{http://www.ams.org/mathscinet-getitem?mr=#1}{#1}}}
  \renewcommand{\MR}[1]{\mbox{\href{http://www.ams.org/mathscinet-getitem?mr=#1}{#1}}}
\providecommand{\bysame}{\leavevmode\hbox to3em{\hrulefill}\thinspace}
\providecommand{\MR}{\relax\ifhmode\unskip\space\fi MR }
\providecommand{\MRhref}[2]{%
  \href{http://www.ams.org/mathscinet-getitem?mr=#1}{#2}
}
\providecommand{\href}[2]{#2}

\end{document}